 \newcounter{corr}
 \definecolor{violet}{rgb}{0.580,0.,0.827}
 \newcommand{\corr}[3]{\typeout{Warning : a correction remains in page
 \thepage}
 				\stepcounter{corr}        
 				{\color{blue}\ifmmode\text{\,\sout{\ensuremath{#1}}\,}\else\sout{#1}\fi}
         {\color{red}#2}
         {\color{violet} #3}}
\numberwithin{equation}{section}
\newtheorem{theorem}{Theorem}
\newtheorem{lemma}[theorem]{Lemma}
\theoremstyle{remark}
\newtheorem{remark}[theorem]{Remark}
\theoremstyle{definition}
\newtheorem{definition}[theorem]{Definition}
\newcommand{\Real}{\mathbb{R}}
\newcommand{\norme}[1]{\left\Vert #1\right\Vert}
\newcommand{\ug}{\boldsymbol{u}}
\newcommand{\vg}{\boldsymbol{v}}
\newcommand{\DIV}{\text{div }}
\newcommand{\wg}{\boldsymbol{w}}
\newcommand{\normal}{\mathbf{n}}
\newcommand{\BS}[1]{\boldsymbol{#1}}
\title[Periodic solutions to a fluid--structure system]{{Existence of time-periodic strong solutions to a fluid--structure system}}
\author{Jean-J\'er\^ome Casanova}
\address{Institut de Math\'ematiques de Toulouse, UMR 5219, Universit\'e Paul Sabatier Toulouse III.}
\email{jean-jerome.casanova@math.univ-toulouse.fr}
\begin{document}
\begin{abstract}
We study a nonlinear coupled fluid--structure system modelling the blood flow through arteries. The fluid is described by the incompressible Navier--Stokes equations in a 2D rectangular domain where the upper part depends on a structure satisfying a damped Euler--Bernoulli beam equation. The system is driven by time-periodic source terms on the inflow and outflow boundaries. We prove the existence of time-periodic strong solutions for this problem under smallness assumptions for the source terms.
  \medskip\\
  \textbf{Key words.} Fluid--structure interaction, Navier--Stokes equations, beam equation, mixed boundary conditions, periodic system.
  \medskip\\
  \textbf{AMS subject classification.} 35Q30, 74F10, 76D03, 76D05, 35B10.
\end{abstract}
\maketitle

\section{Introduction}
In this paper we are interested in the existence of time-periodic solutions for a fluid--structure system involving the incompressible Navier--Stokes equations coupled with a damped Euler--Bernoulli beam equation located on a part of the fluid domain boundary. This system can be used to model the blood flow through human arteries and serves as a benchmark problem for FSI solvers in hemodynamics. When the system is driven by periodic source terms, related for example to the periodic heartbeat, we expect a periodic response of the system. In this article, we prove the existence of time-periodic solutions for the fluid--structure system subject to small periodic impulses on the inflow and outflow boundaries. The study of this fluid--structure model in a periodic framework seems to be new. 
\\

For $L>0$ consider the domain $\Omega$ in $\mathbb{R}^{2}$ defined by $\Omega=(0,L)\times(0,1)$. The different components of the boundary $\partial\Omega$ are denoted by: $\Gamma_{i}=\{0\}\times (0,1)$, $\Gamma_{o}=\{L\}\times (0,1)$, $\Gamma_{b}=(0,L)\times\{0\}$, $\Gamma_{s}=(0,L)\times\{1\}$ and $\Gamma_{d}=\Gamma_{s}\cup\Gamma_{i}\cup\Gamma_{b}$. Let $T>0$ be a period of the system, the domain of the fluid at the time $0\leq t \leq T$ is denoted by $\Omega_{\eta(t)}$ and depends on the displacement of the beam $\eta :\Gamma_s\times(0,T)\mapsto (-1,+\infty)$. More precisely
\[
\begin{array}{rr}
\begin{aligned}
\Omega_{\eta(t)}&=\{(x,y)\in \mathbb{R}^2\mid x\in (0,L),\, 0<y<1+\eta(x,t)\},\\
\Gamma_{\eta(t)}&=\{(x,y)\in \mathbb{R}^2\mid x\in (0,L),\, y=1+\eta(x,t)\}.\\
\end{aligned}
\end{array}
\]
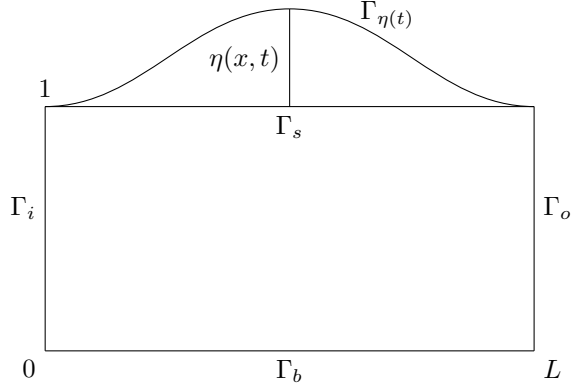
\begin{figure}[h!]
\centering
\begin{tikzpicture}[scale=0.65]
\draw (5,0)node [below] {$\Gamma_b$};
\draw (0,0)node [below left] {$0$};
\draw (10,0)node [below right] {$L$};
\draw (0,5)node [above] {$1$};
\draw (10,2.5)node [above right] {$\Gamma_{o}$};
\draw (0,2.5) node [above left] {$\Gamma_{i}$};
\draw (5,5) node [below] {$\Gamma_s$};
\draw (7,6.4) node [above] {$\Gamma_{\eta(t)}$};
\draw (0,0) -- (10,0);
\draw (10,0) -- (10,5);
\draw (0,5) -- (0,0);
\draw (5,5) -- (5,7);
\draw (0,5) -- (10,5);
\draw (5,5.5)node [above left] {$\eta(x,t)$};
\draw (0,5) .. controls (2,5) and (3,7) .. (5,7);
\draw (5,7) .. controls (7,7) and (8,5) .. (10,5);
\end{tikzpicture}
\caption{Fluid--structure system.} 
\end{figure}
For space-time domain we use the notations
\begin{align*}
&\Sigma^{s}_{T}=\Gamma_{s}\times(0,T),\,\Sigma^{i}_{T}=\Gamma_{i}\times(0,T),\,\Sigma^{o}_{T}=\Gamma_{o}\times(0,T),\,\Sigma^{b}_{T}=\Gamma_{b}\times(0,T),\\
&\Sigma^{d}_{T}=\Gamma_{d}\times(0,T),\,\Sigma_{T}^{\eta}=\underset{t\in(0,T)}{\bigcup}\Gamma_{\eta(t)}\times\{t\},\,\,Q_{T}^{\eta}=\underset{t\in(0,T)}{\bigcup}\Omega_{\eta(t)}\times\{t\}.\\
\end{align*}
Consider the $T$-periodic fluid--structure system
\begin{equation}\label{chap2-existence1}
\begin{aligned}
&\ug_{t} + (\ug\cdot\nabla)\ug-\DIV\sigma(\ug,p)=0,\,\,\,\DIV\ug=0\,\text{ in }Q^{\eta}_{T},\\
&\ug=\eta_{t}\textbf{e}_{2}\,\text{ on }\Sigma^{\eta}_{T},\\
&\ug=\boldsymbol{\omega}_{1}\,\text{ on }\Sigma^{i}_{T},\\
&u_{2}=0\,\text{ and }\,p+(1/2)\vert\ug\vert^{2}=\omega_{2}\,\text{ on }\Sigma^{o}_{T},\\
&\ug=0\,\text{ on }\Sigma^{b}_{T},\,\,\,\ug(0)=\ug(T)\,\text{ in }\Omega_{\eta(0)},\\
&\eta_{tt}-\beta\eta_{xx}-\gamma\eta_{txx}+\alpha\eta_{xxxx}
=-J_{\eta(t)}\textbf{e}_{2}\cdot \sigma(\ug,p)_{\vert\Gamma_{\eta(t)}}\normal_{\eta(t)}\,\text{ on }\Sigma^{s}_{T},\\
&\eta=0\,\text{ and }\,\eta_{x}=0\,\text{ on }\{0,L\}\times(0,T),\\
&\eta(0)=\eta(T)\,\text{ and }\,\eta_{t}(0)=\eta_{t}(T)\,\text{ in }\Gamma_{s},
\end{aligned}
\end{equation}
where $\ug=(u_{1},u_{2})$ is the fluid velocity, $p$ the pressure, $\eta$ the displacement of the beam and
\begin{align*}
\sigma(\ug,p)&=-pI+\nu(\nabla\ug + (\nabla\ug)^{T}),\\
\normal_{\eta(t)}&=J_{\eta(t)}^{-1}\begin{pmatrix}
-\eta_{x}(x,t)\\
1
\end{pmatrix},
\end{align*}
with $J_{\eta(t)}=\sqrt{1+\eta_{x}^{2}}$. The constants $\beta\geq 0$, $\gamma>0$, $\alpha>0$ are parameters relative to the structure and $\nu>0$ is the constant viscosity of the fluid. The periodic source terms $(\boldsymbol{\omega}_{1},\omega_{2})$ play the role of a `pulsation' for the system and can model the heartbeat.

The fluid--structure system \eqref{chap2-existence1} has been investigated with different conditions on the inflow and outflow boundaries:
\begin{enumerate}
\item[(DBC)] homogeneous Dirichlet boundary conditions.
\item[(PBC)] periodic boundary conditions.
\item[(PrBC)] pressure boundary conditions.
\end{enumerate}
For (DBC), the existence of strong solutions is proved in \cite{MR2027753,MR2765696,MR2745779}. The first result, stated in \cite{MR2027753}, is the existence of local-in-time strong solutions for small data. This result is then improved in \cite{MR2745779}, where the stabilization process directly implies the existence of strong solutions, on an arbitrary time interval $[0,T]$ with $T>0$, for small data. Finally, in \cite{MR2765696}, the existence of strong solutions for small data and of local-in-time strong solutions without smallness assumptions on the initial data is proved. As specified in \cite{2017arXiv170706382C}, the strategy developed in \cite{MR2765696} works for zero (or small) initial beam displacement. This difficulty, purely nonlinear, was solved in \cite{2017arXiv170706382C} and more recently in \cite{grandmont:hal-01567661}.

For (PBC), the existence of global strong solutions without smallness assumptions on the initial data is proved in \cite{MR3466847}. For a wide range of beam equations, depending on the positivity of the coefficients $\beta,\gamma,\alpha$, the existence of local-in-time strong solutions without smallness assumptions is proved in \cite{MR3466847,grandmont:hal-01567661}.

The third case (PrBC) is introduced in \cite{MR3017292} where the existence of weak solutions is proved. We investigated in \cite{2017arXiv170706382C} the existence of local-in-time strong solutions without smallness assumptions on the initial data, which includes non-small initial beam displacement, and the existence of strong solution on $[0,T]$ with $T>0$ for small data.

Here we are interested in the existence of time-periodic strong solutions. The term `strong solutions' is related to the spacial regularity of the solution, which is typically, for the fluid, $\BS{H}^{2}$. In the semigroup terminology of evolution equations, the solutions considered in \cite{MR2027753,2017arXiv170706382C,MR3466847,grandmont:hal-01567661,MR2765696,MR2745779} correspond to strict solutions in $L^{2}$ (see Definition \ref{chap2-defi.sol} in the appendix). Motivated by the stabilization of \eqref{chap2-existence1} in a neighbourhood of a periodic solution, we prove the existence of a time-periodic strict solution in $\mathcal{C}^{0}$ for \eqref{chap2-existence1} with H\"older regularity in time. Our result can be directly adapted for the boundary conditions (DBC)--(PBC)--(PrBC). The Dirichlet boundary condition on the inflow is motivated, once again, by stabilization purpose.

Let us describe the general strategy to construct a periodic solution for \eqref{chap2-existence1}. First, we perform a change of variables mapping the moving domain $\Omega_{\eta(t)}$ into the fixed domain $\Omega$. We then linearize and we rewrite the coupled system as an abstract evolution equation driven by an unbounded operator $(\mathcal{A},\mathcal{D}(\mathcal{A}))$ in Section \ref{section2}. We prove that $(\mathcal{A},\mathcal{D}(\mathcal{A}))$ is the infinitesimal generator of an analytic semigroup and that its resolvent is compact. At this stage we use the abstract results developed in the appendix to ensure the existence of a time-periodic solution for the linear system. Finally, we study the nonlinear system in Section \ref{chap2-nonlinear} with a fixed point argument in the space of periodic functions. The main theorem of this paper, where the notation $\sharp$ denotes time-periodic functions, can be formulated as follows.
\begin{theorem}\label{main-theorem-intro}
Fix $\theta\in(0,1)$ and $T>0$. There exists $R>0$ such that, for all $T$-periodic source terms
\[(\boldsymbol{\omega}_{1},\omega_{2})\in\left(\mathcal{C}^{\theta}_{\sharp}([0,T];\mathbf{H}^{3/2}_{0}(\Gamma_{i}))\cap \mathcal{C}^{1+\theta}_{\sharp}([0,T];\mathbf{H}^{-1/2}(\Gamma_{i}))\right)\times\mathcal{C}^{\theta}_{\sharp}([0,T];H^{1/2}(\Gamma_{o})),
\]
satisfying
\[\norme{\boldsymbol{\omega}_{1}}_{\mathcal{C}^{\theta}_{\sharp}([0,T];\mathbf{H}^{3/2}_{0}(\Gamma_{i}))\cap \mathcal{C}^{1+\theta}_{\sharp}([0,T];\mathbf{H}^{-1/2}(\Gamma_{i}))}+\norme{\omega_{2}}_{\mathcal{C}^{\theta}_{\sharp}([0,T];H^{1/2}(\Gamma_{o}))}\leq R,
\]
the system \eqref{chap2-existence1} admits a $T$-periodic strict solution $(\ug,p,\eta)$ belonging to (after a change of variables mapping $\Omega_{\eta(t)}$ into $\Omega$)
\begin{itemize}
\item $\ug\in \mathcal{C}^{\theta}_{\sharp}([0,T];\textbf{H}^{2}(\Omega))\cap \mathcal{C}^{1+\theta}_{\sharp}([0,T];\BS{L}^{2}(\Omega))$.
\item $p\in \mathcal{C}^{\theta}_{\sharp}([0,T];H^{1}(\Omega))$.
\item $\eta\in \mathcal{C}^{\theta}_{\sharp}([0,T];H^{4}(\Gamma_{s})\cap H^{2}_{0}(\Gamma_{s}))\cap \mathcal{C}^{1+\theta}_{\sharp}([0,T];H^{2}_{0}(\Gamma_{s}))\cap\mathcal{C}^{2+\theta}_{\sharp}([0,T];L^{2}(\Gamma_{s}))$.
\end{itemize}
\end{theorem}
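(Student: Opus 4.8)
The plan is to follow the strategy outlined in the introduction, reducing the time-periodic nonlinear problem to a fixed-point problem built on the linear theory. First I would perform the change of variables $(x,y)\mapsto(x,y/(1+\eta(x,t)))$ mapping the moving domain $Q^\eta_T$ onto the fixed cylinder $\Omega\times(0,T)$; this transforms the Navier--Stokes system and the beam equation into a system posed on the fixed domain, at the price of introducing $\eta$-dependent coefficients in the elliptic operator, the divergence constraint, and the stress tensor on $\Gamma_s$. I would then split the transformed system into its \emph{linear part} --- which couples the Stokes operator on $\Omega$ with the damped Euler--Bernoulli beam through the linearized interface conditions --- and a \emph{nonlinear remainder} $\mathcal{F}(\ug,p,\eta)$ collecting the convective term, all quadratic-and-higher terms coming from the geometry, and the boundary terms generated by $(\boldsymbol{\omega}_1,\omega_2)$ via a suitable lifting. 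The linear coupled system is written as an abstract evolution equation $z' = \mathcal{A}z + f$ for the unbounded operator $(\mathcal{A},\mathcal{D}(\mathcal{A}))$ of Section \ref{section2}; since $\mathcal{A}$ generates an analytic semigroup with compact resolvent, the abstract results of the appendix yield, for any $T$-periodic right-hand side in the appropriate Hölder-in-time space, a unique $T$-periodic strict solution in $\mathcal{C}^0$ with the claimed maximal regularity. This defines a bounded linear solution map $\mathcal{L}$ on the periodic function spaces.

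Next I would set up the fixed-point argument. I would define the ball
\[
\mathcal{B}_\rho=\left\{(\ug,p,\eta): \norme{(\ug,p,\eta)}_{\mathcal{X}}\leq\rho\right\}
\]
in the product space $\mathcal{X}$ given by the three regularity classes in the statement, and consider the map $\Phi(\ug,p,\eta)=\mathcal{L}\big(\mathcal{F}(\ug,p,\eta)+\text{(source lift)}\big)$. The heart of the matter is to estimate $\mathcal{F}$: one needs that $\mathcal{F}:\mathcal{B}_\rho\to$ (the target space of $\mathcal{L}$'s data) is well defined, that $\norme{\mathcal{F}(z)}\lesssim \rho^2$ for $z\in\mathcal{B}_\rho$, and that $\mathcal{F}$ is Lipschitz with small constant on $\mathcal{B}_\rho$, i.e. $\norme{\mathcal{F}(z_1)-\mathcal{F}(z_2)}\lesssim(\rho+R)\norme{z_1-z_2}_{\mathcal{X}}$. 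Combined with $\norme{\mathcal{L}}\leq C$ and the linear bound $\norme{\text{source lift}}\lesssim R$, choosing first $\rho$ of order $R$ and then $R$ small enough makes $\Phi$ a contraction from $\mathcal{B}_\rho$ into itself; the Banach fixed-point theorem then produces the desired $T$-periodic strict solution, and undoing the change of variables gives a solution of \eqref{chap2-existence1}. One must also check that $\rho$ is small enough that $1+\eta>0$ (so the change of variables is invertible) and that the $H^2$-in-space, $\mathcal{C}^\theta$-in-time regularity of $\ug$ and the $H^4$-regularity of $\eta$ make all the nonlinear products land in the right spaces --- here the algebra/multiplication properties of $\mathcal{C}^\theta_\sharp([0,T];H^s(\Omega))$ for $s$ above the relevant Sobolev threshold, together with trace theorems on $\Gamma_s$ and $\Gamma_o$, are the technical workhorses.

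The main obstacle I expect is the nonlinear estimate on the \emph{geometric} terms rather than on the convective term. Writing the change of variables produces coefficients that depend on $\eta$, $\eta_x$, $\eta_{xx}$ and on $1/(1+\eta)$, multiplying second-order derivatives of $\ug$ and first-order derivatives of $p$; to close the estimates in the Hölder-in-time, high-spatial-regularity norm one must carefully track how a loss of spatial derivative on $\eta$ (which has two more spatial derivatives than $\ug$, since $\eta\in H^4$ while $\ug\in H^2$) compensates the derivatives falling on $\ug$ and $p$, and one must verify that the difference estimates remain quadratic, with no bad term linear in $z_1-z_2$ that is not multiplied by a small factor. The boundary stress term $J_{\eta(t)}\textbf{e}_2\cdot\sigma(\ug,p)\normal_{\eta(t)}$ on $\Gamma_s$, which feeds the beam equation, is the most delicate: it is nonlinear in $\eta_x$ and linear in $\nabla\ug|_{\Gamma_s}$ and $p|_{\Gamma_s}$, so one needs the trace of $\ug$ in $\mathcal{C}^\theta_\sharp([0,T];H^{3/2}(\Gamma_s))$ and of $p$ in $\mathcal{C}^\theta_\sharp([0,T];H^{1/2}(\Gamma_s))$ and a compatible multiplication estimate in those fractional spaces. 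Once these nonlinear bounds are established the periodic structure is essentially inherited for free from the linear theory, since $\mathcal{L}$ maps periodic data to periodic solutions and all the nonlinear operations preserve $T$-periodicity.
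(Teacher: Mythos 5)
Your overall architecture --- change of variables to the fixed domain, semigroup formulation of the linearized coupled system, periodic solvability of the linear problem via the abstract appendix, then a contraction mapping on a ball of $T$-periodic H\"older functions with control of $(1+\eta)^{-1}$ --- is exactly the route the paper takes (Sections \ref{section2}--\ref{chap2-nonlinear}), and your discussion of the quadratic and Lipschitz estimates on the geometric nonlinearities matches Theorem \ref{chap2-estNLmain} and the ball $\mathcal{B}(R,\mu)$ used in Theorem \ref{chap2-Fixpoint}.

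There is, however, one genuine gap. You assert that analyticity of the semigroup plus compactness of the resolvent already yield, ``for any $T$-periodic right-hand side,'' a unique $T$-periodic strict solution of the linear system. That does not follow: the abstract result you invoke (Theorem \ref{chap2-thm-C0}) requires assumption \eqref{chap2-assumptionAT}, i.e.\ $1\in\rho(S(T))$, which by the spectral mapping theorem for the compact semigroup amounts to $0\notin\sigma_{p}(\mathcal{A})$ \emph{and} to $T$ avoiding the resonant periods $2k\pi/b_{j}$ attached to any purely imaginary eigenvalues $ib_{j}$ of $\mathcal{A}$. Since the theorem is claimed for an arbitrary fixed $T>0$, you must rule out eigenvalues of $\mathcal{A}$ on the imaginary axis altogether; otherwise your argument only produces periodic solutions for $T$ outside a discrete exceptional set. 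The paper closes this with an energy (multiplier) argument on the eigenvalue problem \eqref{chap2-eigenvalue-neg}: testing the Stokes equations with $\overline{\ug}$ and the beam equation with $\overline{\eta}_{2}$, combining, and taking real parts gives
\begin{equation*}
\mathrm{Re}\,\lambda\left[\int_{\Omega}\vert\ug\vert^{2}+\int_{\Gamma_{s}}\vert\eta_{2}\vert^{2}+\beta\int_{\Gamma_{s}}\vert\eta_{1,x}\vert^{2}+\alpha\int_{\Gamma_{s}}\vert\eta_{1,xx}\vert^{2}\right]+\nu\int_{\Omega}\vert\nabla\ug\vert^{2}+\gamma\int_{\Gamma_{s}}\vert\eta_{2,x}\vert^{2}=0,
\end{equation*}
whence $\mathrm{Re}\,\lambda<0$ for every nonzero eigenvalue (the structural damping $\gamma>0$ is essential here), while $0\notin\sigma_{p}(\mathcal{A})$ is checked separately. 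Adding this spectral localization step would make your proposal complete and essentially identical to the paper's proof.
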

The functional spaces are introduced in Section \ref{function.spaces}. In the appendix we present existence results for time-periodic abstract evolution equation. For a periodic evolution equation
\[
\begin{cases}
\begin{aligned}
&y'(t)=Ay(t)+f(t),\text{ for }t\in[0,T],\\
&y(0)=y(T),
\end{aligned}
\end{cases}
\]
with $T>0$, the existence of a solution is related to the spectral criteria $1\in \rho(S(T))$ where $(S(t))_{t\geq 0}$ is the semigroup associated with $A$. This simple criteria follows from the Duhamel formula and is well known. It is stated, for example, in \cite{MR1204883,MR1040464} for $T$-periodic mild solutions and in \cite{MR963847,MR929914} for strict solutions in $\mathcal{C}^{0}$ with H\"older regularity in time (and for time-dependent operator $A(t)$). Our approach, however, specifies the different regularities that we can expect on the periodic solution, depending on the source term $f$. We also provide explicit conditions on the pair $(A,T)$ to ensure that the spectral criteria is satisfied. Remark that the previous results always assume that $A$ is the infinitesimal generator of an analytic semigroup. For abstract periodic evolution equations with weaker assumptions on $A$ we refer to \cite{MR1937154}.

Let us conclude this introduction with a brief history on the existence of time-periodic solutions for the Navier--Stokes equations. This question was initially considered in 1960s in \cite{MR0120968,MR0115017,MR0170541,MR0105251}. In particular, in \cite{MR0120968,MR0115017,MR0170541}, the authors obtained a periodic weak solution by considering a fixed point of the Poincar\'e map which takes an initial value and provides the state of the corresponding initial-value problem at time $T$. The existence of strong solutions for small data is proved in \cite{MR0213714} in 3D and without size rectriction in \cite{MR0264254} in 2D. For more recent results with non-homogeneous boundary conditions see \cite{MR2589952,MR2861830}. The existence theory for the periodic Navier--Stokes equations in bounded domain is now as developed as the existence theory for the initial value problem. For unbounded domain the question is still delicate and was investigated, with zero boundary conditions at the infinity, in \cite{MR2062429,MR3039695,MR1373173,MR1107016,MR1699121,MR1777114}. For further references on the existence of periodic solutions for the Navier--Stokes equations we refer to \cite{tuprints3309}.

The method developed in this article corresponds to the Poincar\'e map approach, applied on the whole coupled fluid--structure system. Note that the periodic solution obtained for the Navier--Stokes equations is usually unique. Here the free boundary makes the analysis of the uniqueness more complicated. For instance, we cannot considered the difference of two periodic solutions in their respective time-dependent domains, which may be different. The difference has to be taken after a change of variables mapping both periodic solutions in the same domain. In that case, energy estimates are difficult to obtain due to the higher order `geometrical' nonlinear terms. The uniqueness question remains an open question in our work.
\subsection{Equivalent system in a reference configuration}
To fix the domain we perform the following change of variables
\begin{equation}\label{chap2-changevarrectangle}
\mathcal{T}_{\eta,0}(t) :
\begin{cases}
\Omega_{\eta(t)}&\longrightarrow \Omega,\\
(x,y)&\longmapsto (x,z)=\left(x,\frac{y}{1+\eta(x,t)}\right).\\
\end{cases}
\end{equation}
Setting $Q_{T}=\Omega\times(0,T)$, $\hat{\BS{u}}(x,z,t)=\ug(\mathcal{T}_{\eta,0}^{-1}(t)(x,z),t)$ and $\hat{p}(x,z,t)=p(\mathcal{T}_{\eta,0}^{-1}(t)(x,z),t)$, the system (\ref{chap2-existence1}) becomes
\begin{equation}\label{chap2-mainexistence}
\begin{aligned}
&\hat{\boldsymbol{u}}_{t}-\nu\Delta\hat{\BS{u}}+\nabla\hat{p}=\BS{g}(\hat{\BS{u}},\hat{p},\eta),\,\,\,\DIV\hat{\BS{u}}=\DIV\BS{w}(\hat{\BS{u}},\eta)\,\text{ in }Q_{T},\\
&\hat{\BS{u}}=\eta_{t}\textbf{e}_{2}\,\text{ on }\Sigma^{s}_{T},\\
&\hat{\BS{u}}=\boldsymbol{\omega}_{1}\,\text{ on }\Sigma^{i}_{T},\\
&\hat{u}_2=0\,\text{ and }\,\hat{p}+(1/2)\vert\hat{\BS{u}}\vert^{2}=\omega_{2}\,\text{ on }\Sigma^{o}_{T},\\
&\hat{\BS{u}}=0\,\text{ on }\Sigma^{b}_{T},\,\,\,\hat{\BS{u}}(0)=\hat{\BS{u}}(T)\,\text{ on }\Omega\\
&\eta_{tt}-\beta\eta_{xx}-\gamma\eta_{txx}+\alpha\eta_{xxxx}=\hat{p}-2\nu\hat{u}_{2,z}+\Psi(\hat{\BS{u}},\eta)\,\text{ on }\Sigma^{s}_{T},\\
&\eta=0\,\text{ and }\,\eta_{x}=0\,\text{ on }\{0,L\}\times(0,T),\\
&\eta(0)=\eta(T)\,\text{ and }\,\eta_{t}(0)=\eta_{t}(T)\,\text{ in }\Gamma_{s},
\end{aligned}
\end{equation}
with
\begin{align*}
&\begin{aligned}
\BS{G}(\hat{\BS{u}},\hat{p},\eta)={}&-\eta\hat{\BS{u}}_t+\left[z\eta_t+\nu z\left(\frac{\eta_x^2}{1+\eta}-\eta_{xx}\right)\right]\hat{\BS{u}}_z\\
&+\nu\left[-2z\eta_x\hat{\BS{u}}_{xz}+\eta\hat{\BS{u}}_{xx}+\frac{z^2\eta_x^2-\eta}{1+\eta}\hat{\BS{u}}_{zz}\right]+z\eta_x\hat{p}_z\textbf{e}_{1}\\
&-z\eta\hat{p}_x\textbf{e}_1-(1+\eta)\hat{u}_1\hat{\BS{u}}_x+(z\eta_x
\hat{u}_1-\hat{u}_2)\hat{\BS{u}}_z,
\end{aligned}\\
&\begin{aligned}
\BS{w}[\hat{\BS{u}},\eta]={}&-\eta\hat{u}_1\textbf{e}_1+z\eta_x\hat{u}_1\textbf{e}_2,\\
\end{aligned}\\
&\begin{aligned}
&\Psi(\hat{\BS{u}},\eta)=\nu\left(\frac{\eta_x}{1+\eta}\hat{u}_{1,z}+\eta_x\hat{u}_{2,x}-\frac{\eta_{x}^{2}z-2\eta}{1+\eta}\hat{u}_{2,z}\right).
\end{aligned}\\
\end{align*}
We study the linear periodic system associated to \eqref{chap2-mainexistence} in Section \ref{section-semigroup-formulation}--\ref{chap2-time-periodic-linear}. The existence of time-periodic solution for \eqref{chap2-mainexistence} is established in Section \ref{chap2-nonlinear} with a fixed point procedure.
\subsection{Function spaces}\label{function.spaces}
To deal with the mixed boundary conditions introduce the spaces
\[\boldsymbol{V}^{0}_{n,\Gamma_{d}}(\Omega)=\{\vg\in \BS{L}^{2}(\Omega)\mid \DIV\vg=0\text{ in }\Omega, \vg\cdot\textbf{n}=0\text{ on }\Gamma_{d}\},\]
and the orthogonal decomposition of $\BS{L}^{2}(\Omega)=L^{2}(\Omega,\mathbb{R}^{2})$
\[\BS{L}^{2}(\Omega)=\BS{V}^{0}_{n,\Gamma_{d}}(\Omega)\oplus\text{grad }H^{1}_{\Gamma_{o}}(\Omega),\]
where $H^{1}_{\Gamma_{o}}(\Omega)=\{u\in H^{1}(\Omega)\mid u=0 \text{ on }\Gamma_{o}\}$.  Let $\Pi :\BS{L}^{2}(\Omega)\rightarrow \BS{V}^{0}_{n,\Gamma_{d}}(\Omega)$ be the so-called Leray projector associated with this decomposition. If $\ug$ belongs to $\BS{L}^{2}(\Omega)$ then $\Pi\textbf{u}=\ug-\nabla p_{\ug}-\nabla q_{\ug}$ where $p_{\ug}$ and $q_{\ug}$ are solutions to the following elliptic equations
\begin{equation}\label{chap2-equation-Pif}
\begin{aligned}
&p_{\ug}\in H^{1}_{0}(\Omega),\,\,\,\Delta p_{\ug}=\DIV\ug\in H^{-1}(\Omega),\\
&q_{\ug}\in H^{1}_{\Gamma_{o}}(\Omega),\,\,\,\Delta q_{\ug}=0,\,\,\,\frac{\partial q_{\ug}}{\partial \normal}=(\ug-\nabla p_{\ug})\cdot\normal\,\text{ on }\Gamma_{d},\,\,\,q_{\ug}=0\,\text{ on }\Gamma_{o}.
\end{aligned}
\end{equation}
Throughout this article the functions and spaces with vector values are written with a bold typography. For example $\BS{H}^{2}(\Omega)=H^{2}(\Omega,\Real^{2})$. Using the notations in \cite[Theorem 11.7]{MR0350177}, we introduce the space $H^{3/2}_{00}(\Gamma_{s})=[H^{1}_{0}(\Gamma_{s}),H^{2}_{0}(\Gamma_{s})]_{1/2}$. This space is a strict subspace of $H^{3/2}_{0}(\Gamma_{s})=H^{3/2}(\Gamma_{s})\cap H^{1}_{0}(\Gamma_{s})$. Odd and even symmetries preserve the $H^{k}$-regularity for functions in $H^{k}_{0}(\Gamma_{s})$ with $k=1,2$, thus, by interpolation, the $H^{3/2}$-regularity is also preserved for functions in $H^{3/2}_{00}(\Gamma_{s})$. This property is used in \cite{2017arXiv170706382C} to handle the pressure boundary condition.

For the boundary condition on the inflow, we use the results developed in \cite{MR2641539} for elliptic equations in a dihedron. In our case, the angle between $\Gamma_{i}$ and $\Gamma_{s}$ is equal to $\frac{\pi}{2}$. If $\boldsymbol{\omega}$ (resp. $\BS{g}$) denotes the boundary condition on $\Gamma_{i}$ (resp. $\Gamma_{s,0}$), the Laplace and Stokes equations possess solutions with $H^{2}$-regularity near $C_{0,1}=(0,1)$ provided that the data are regular enough and that the compatibility conditions $\boldsymbol{\omega}(C_{0,1})=\BS{g}(C_{0,1})$ is satisfied. To ensure these conditions, the non-homogeneous boundary condition on $\Gamma_{i}$ is chosen in $H^{3/2}_{0}(\Gamma_{i})$. Consider the Stokes system
\begin{equation}\label{chap2-Stokes-def}
\begin{aligned}
&-\nu\Delta \ug +\nabla p=\BS{f},\,\,\,\,\DIV\ug=0\,\text{ in }\Omega,\\
&\ug=0\,\text{ on }\Gamma_{d},\,\,\,u_{2}=0\,\text{ and }p=0\,\text{ on }\Gamma_{o}.\\
\end{aligned}
\end{equation}
The energy space associated with (\ref{chap2-Stokes-def}) is
\[V=\{\ug\in \BS{H}^{1}(\Omega)\mid \DIV\ug=0\text{ in }\Omega\text{, }\ug=0\text{ on }\Gamma_{d}\text{, }u_2=0\text{ on }\Gamma_{o}\}.\]
The regularity result for \eqref{chap2-Stokes-def} is similar to \cite[Theorem 5.4]{2017arXiv170706382C} and we define the Stokes operator $(A_{s},\mathcal{D}(A_{s}))$ in $\BS{V}^{0}_{n,\Gamma_{d}}(\Omega)$ by
\[\mathcal{D}(A_s)=\BS{H}^{2}(\Omega)\cap V,\,\text{ and for all}\,\ug\in \mathcal{D}(A_s),\,A_s\ug=\nu\Pi\Delta\ug.\]
We also introduce the space $\BS{V}^{s}(\Omega)=\{\ug\in \BS{H}^{s}(\Omega)\mid \DIV\ug=0\}$ for $s\geq 0$. To describe the Dirichlet boundary condition on $\Gamma_{s}$ set
\begin{align*}
\mathcal{L}^{2}(\Gamma_{s})&=\{0\}\times L^{2}(\Gamma_{s}),&\mathcal{H}^{3/2}_{00}(\Gamma_{s})&=\{0\}\times H^{3/2}_{00}(\Gamma_{s}),\\
\mathcal{H}^{\kappa}(\Gamma_{s})&=\{0\}\times H^{\kappa}(\Gamma_{s}),&\mathcal{H}^{\kappa}_{0}(\Gamma_{s})&=\{0\}\times H^{\kappa}_{0}(\Gamma_{s})
\,\text{ for }\kappa\geq 0.
\end{align*}
For $\kappa\geq 0$, the dual space of $\mathcal{H}^{\kappa}(\Gamma_{s})$ with $\mathcal{L}^{2}(\Gamma_{s})$ as pivot space is denoted by $(\mathcal{H}^{\kappa}(\Gamma_{s}))'$.

For space-time dependent functions we use the notations introduced in \cite{MR0350178}:
\begin{align*}
\BS{L}^{2}(Q_{T})&{}=L^{2}(0,T;\BS{L}^{2}(\Omega)),\,\,\,\BS{H}^{p,q}(Q_{T})=L^{2}(0,T;\BS{H}^{p}(\Omega))\cap H^{q}(0,T;\BS{L}^{2}(\Omega)),\,p,q\geq 0,\\
L^{2}(\Sigma_{T}^{s})&{}=L^{2}(0,T;L^{2}(\Gamma_{s})),\,\,\,\,H^{p,q}(\Sigma^{s}_{T})=L^{2}(0,T;H^{p}(\Gamma_{s}))\cap H^{q}(0,T;L^{2}(\Gamma_{s})),\,\,\,p,q\geq 0.
\end{align*}
If $X$ is a space of functions and $\rho\geq 0$ we set
\begin{align*}
&\mathcal{C}^{\rho}_{\sharp}([0,T];X):=\{v_{\vert [0,T]}\mid v\in \mathcal{C}^{\rho}(\mathbb{R};X)\text{ is }T\text{-periodic}\},\\
&H^{\rho}_{\sharp}(0,T;X):=\{v_{\vert [0,T]}\mid v\in H^{\rho}_{\text{loc}}(\mathbb{R};X)\text{ is }T\text{-periodic}\}.
\end{align*}

\section{Linear system}\label{section2}
\subsection{Stokes system with non-homogeneous mixed boundary conditions}
In this section we consider the Stokes system
\begin{equation}\label{chap2-stokes-nh}
\begin{aligned}
&\lambda\ug-\nu\Delta\ug+\nabla p=\BS{f},\,\,\,\DIV\ug=0\text{ in }\Omega,\\
&\ug=\BS{g}\,\text{ on }\Gamma_{s},\,\,\, \ug=\boldsymbol{\omega}\,\text{ on }\Gamma_{i},\\
&u_{2}=0\,\text{ and }p=0\text{ on }\Gamma_{o},\,\,\,\ug=0\,\text{ on }\Gamma_{b},
\end{aligned}
\end{equation}
with $\lambda\in\mathbb{C}$, $\BS{f}\in\BS{L}^{2}(\Omega)$, $\BS{g}\in\mathcal{H}^{3/2}_{00}(\Gamma_{s})$ and $\boldsymbol{\omega}\in \BS{H}^{3/2}_{0}(\Gamma_{i})$. The following lemmas provide suitable lifting of the non-homogeneous Dirichlet boundary conditions on $\Gamma_{s}$ and $\Gamma_{i}$.
\begin{lemma}\label{chap2-lemme-gammas}
There exists $\Phi_{s}\in\mathcal{L}(\mathcal{H}^{3/2}_{00}(\Gamma_{s}),\BS{H}^{2}(\Omega))$ such that, for all $\BS{g}\in \mathcal{H}^{3/2}_{00}(\Gamma_{s})$, $\wg=\Phi_s(\BS{g})$ satisfies
\begin{equation}\label{chap2-liftS}
\begin{aligned}
&\DIV\wg=0\text{ in }\Omega,\\
&\wg=\BS{g}\text{ on }\Gamma_{s},\,\wg=0\text{ on }\Gamma_{i}\cup\Gamma_{b},\,w_{2}=0\text{ on }\Gamma_{o}.
\end{aligned}
\end{equation}
\end{lemma}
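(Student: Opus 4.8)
The plan is to construct the lifting $\Phi_s$ explicitly by first producing a divergence-free $\BS{H}^2$ extension of $\BS{g}$ that is compactly supported away from $\Gamma_i\cup\Gamma_b\cup\Gamma_o$, and then correcting it with a divergence-free term to obtain the required trace conditions. Since $\BS{g}=(0,g)$ with $g\in H^{3/2}_{00}(\Gamma_s)$, the natural starting point is to build a vector field $\wg$ of the form $\wg=\operatorname{curl}\psi=(\partial_z\psi,-\partial_x\psi)$ so that $\DIV\wg=0$ automatically; it then suffices to choose the scalar stream function $\psi\in H^{3}(\Omega)$ so that its first-order traces match $\BS{g}$ on $\Gamma_s$ and vanish on the rest of $\partial\Omega$.

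Concretely, I would first extend $g$ to a function $\tilde g\in H^{2}_{0}(\Omega)$ (possible because $g\in H^{3/2}_{00}(\Gamma_s)$, which is exactly the trace space allowing an $H^2$ extension with zero trace on the complementary boundary); this uses the characterization $H^{3/2}_{00}(\Gamma_s)=[H^1_0(\Gamma_s),H^2_0(\Gamma_s)]_{1/2}$ from the excerpt. Set $\psi(x,z)=-\int_{z}^{1}\tilde g(x,s)\,ds$ so that $\partial_z\psi=\tilde g$; then $\psi\in H^{3}(\Omega)$, $\psi=0$ and $\partial_z\psi=g$ on $\Gamma_s$, and on $\Gamma_b$ one has $\partial_z\psi=\tilde g(x,0)=0$ while $\partial_x\psi$ can be arranged to vanish by also extending $g$ so that $\int_0^1\tilde g(x,s)\,ds=0$ for $x$ near $0$ and $L$. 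Defining $\wg=\operatorname{curl}\psi$ gives $\DIV\wg=0$, $\wg=(g,0)$ on $\Gamma_s$ — wait, here the orientation must be checked: $\operatorname{curl}\psi=(\partial_z\psi,-\partial_x\psi)=(g,-\partial_x\psi)$ on $\Gamma_s$, so I instead take $\psi$ with $\partial_x\psi|_{\Gamma_s}=-g$ and $\partial_z\psi|_{\Gamma_s}=0$; such a $\psi$ is obtained by setting $\psi(x,z)=-\chi(z)\int_0^x \tilde g(\xi)\,d\xi$ with a cutoff $\chi$ equal to $1$ near $z=1$ and flat there, where now $\tilde g$ is an appropriate extension of $g$ to $(0,L)$. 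One then checks all boundary conditions in \eqref{chap2-liftS} directly and verifies $\wg\in\BS{H}^2(\Omega)$ with a bound linear in $\|g\|_{H^{3/2}_{00}(\Gamma_s)}$, giving the required continuity of $\Phi_s$.

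The main obstacle is the compatibility at the corners $C_{0,1}=(0,1)$ and $C_{L,1}=(L,1)$ where $\Gamma_s$ meets $\Gamma_i$ and $\Gamma_o$: for $\wg$ to have $\BS{H}^2(\Omega)$ regularity up to these corners while simultaneously satisfying $\wg=\BS{g}$ on $\Gamma_s$ and $\wg=0$ (resp. $w_2=0$) on the adjacent edge, the trace of $g$ and its derivatives must degenerate appropriately at the corner. This is precisely why the hypothesis $\BS{g}\in\mathcal{H}^{3/2}_{00}(\Gamma_s)$ (rather than merely $\mathcal{H}^{3/2}_0$) is imposed: the $H^{3/2}_{00}$ space forces the extra vanishing (a Hardy-type condition, $\int (g(x))^2/\operatorname{dist}(x,\partial\Gamma_s)\,dx<\infty$) that makes the corner-compatible $H^2$ extension possible. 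I would handle this by working in the dihedron near each corner using the elliptic theory of \cite{MR2641539} invoked in the excerpt, or more elementarily by noting that the extension $\tilde g\in H^2_0$ together with the stream-function construction automatically produces the right vanishing orders, so that a localization/cutoff argument patches the corner construction with the interior one. The rest — verifying $\DIV\wg=0$, the boundary traces, and the operator norm bound — is routine.
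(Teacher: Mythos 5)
Your stream-function idea ($\wg=\operatorname{curl}\psi$, which makes $\DIV\wg=0$ automatic) is a genuinely different route from the paper, which instead extends $\BS{g}$ by odd reflection to the doubled rectangle $(0,2L)\times(0,1)$ (to restore zero flux), solves a Dirichlet Stokes problem there, and symmetrizes to force $w_2=0$ on $\{x=L\}$. The stream-function route could in principle work, but the explicit formula you give does not, and the failure is not at the corners: it is in the interior regularity.

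Concretely, with $\psi(x,z)=-\chi(z)\int_0^x \tilde g(\xi)\,d\xi$ you get $w_2=-\partial_x\psi=\chi(z)\tilde g(x)$, a tensor product of $\chi$ with $g$. Then $\partial_{xx}w_2=\chi(z)\,g''(x)$, so $w_2\in H^2(\Omega)$ forces $g\in H^2(\Gamma_s)$, whereas you only have $g\in H^{3/2}_{00}(\Gamma_s)$; equivalently, $\psi=-\chi(z)G(x)$ with $G=\int_0^x g\in H^{5/2}$ is not in $H^3(\Omega)$. This is exactly the reason one cannot lift a boundary trace of fractional Sobolev regularity by a cutoff-times-trace formula: a genuine trace lifting must smooth in the normal variable at a rate tied to the tangential frequency (a Poisson-kernel-type or harmonic/biharmonic extension), or, as in the paper, be produced by solving an elliptic system whose $H^2$ theory absorbs the loss. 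Your claim that one ``verifies $\wg\in\BS{H}^2(\Omega)$ with a bound linear in $\norme{g}_{H^{3/2}_{00}(\Gamma_s)}$'' is therefore unproved and false for the formula as written. To repair the argument you would need to replace the tensor product by a true $H^3(\Omega)$ lifting of the Cauchy data $(\psi,\partial_z\psi)|_{\Gamma_s}=(-G,0)$ with $G\in H^{5/2}(\Gamma_s)$, and only then confront the corner compatibility at $(0,1)$ and $(L,1)$ (which you correctly identify as the place where $H^{3/2}_{00}$ rather than $H^{3/2}_0$ is needed; in the paper this hypothesis is used precisely so that the odd reflection across $x=L$ preserves the $H^{3/2}$ regularity). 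Your remark that the flux $\int_{\Gamma_s}g$ need not vanish, and must exit through $\Gamma_o$ where only $w_2=0$ is imposed, is correct and is the same observation that motivates the paper's extension to the doubled domain.
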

\begin{proof}
The idea to solve \eqref{chap2-liftS} is to use a Stokes system with Dirichlet boundary conditions on an extended domain. We set $\Omega_{e}=(0,2L)\times(0,1)$, $\Gamma_{s,e}=(0,2L)\times\{1\}$, $\Gamma_{b,e}=(0,2L)\times\{0\}$, $\Gamma_{o,e}=\{2L\}\times(0,1)$ and
\[\hat{\BS{g}}:\begin{cases}
\begin{array}{l}
\hat{\BS{g}}=\BS{g}\text{ on }(0,L)\times\{1\},\\
\hat{\BS{g}}(x,1)=-\BS{g}(2L-x,1)\text{ for }x\in(L,2L).
\end{array}
\end{cases}
\]  
Thanks to the properties of the space $H^{3/2}_{00}(\Gamma_{s})$ with respect to symmetries, the function $\hat{\BS{g}}$ is in $\mathcal{H}^{3/2}_{00}(\Gamma_{s,e})$. Moreover, it has a zero average by construction. Consider the Stokes system
\begin{equation}\label{chap3-sys.expl}
\begin{aligned}
&-\nu\Delta\vg + \nabla q=0,\,\,\,\DIV\vg=0\,\text{ in }\Omega_{e},\\
&\vg=\hat{\BS{g}}\,\text{ on }\Gamma_{s,e},\,\vg=0\,\text{ on }\partial\Omega_{e}\setminus\Gamma_{s,e}.
\end{aligned}
\end{equation}
This system admits a unique solution $(\vg,q)\in \BS{H}^{2}(\Omega_{e})\times H^{1}(\Omega_{e})$ (see for example \cite{MR2641539}; note that one could not find $\wg$ directly by solving \eqref{chap3-sys.expl} on $\Omega$, since $\BS{g}$ does not necessarily have a zero average on $\Gamma_s$, contrary to $\hat{\BS{g}}$ on $\Gamma_{s,e}$. We introduce the function
\[\vg_{s}(x,y):=\begin{pmatrix}
1&0\\
0&-1
\end{pmatrix}\vg(2L-x,y)\text{ for all }(x,y)\in\Omega_{e}.
\]
The function $\vg_{s}\in\BS{H}^{2}(\Omega_{e})$ still satisfies 
\[\begin{aligned}
&\DIV\vg_{s}=0\text{ in }\Omega_{e},\\
&\vg_{s}=\hat{\BS{g}}\text{ on }\Gamma_{s,e},\,\vg_{s}=0\text{ on }\partial\Omega_{e}\setminus\Gamma_{s,e},
\end{aligned}
\]
and $\hat{\vg}:=\frac{\vg+\vg_{s}}{2}$ verifies $\hat{v}_{2}(L,y)=0$ for all $y\in(0,1)$. The restriction to $\Omega$ of $\hat{\vg}$ is solution to \eqref{chap2-liftS}. The linearity of the mapping $\BS{g}\mapsto \wg$ is obvious from the construction above, and its continuity (that is, an estimate $\norme{\wg}_{\BS{H}^{2}(\Omega)}\leq C\norme{\BS{g}}_{\mathcal{H}^{3/2}_{00}(\Gamma_{s})}$) follows from the classical estimates for the Stokes system with Dirichlet boundary conditions.
\end{proof}
\begin{lemma}\label{chap2-lemme-gammai}
There exists $\Phi_{i}\in\mathcal{L}(\BS{H}^{3/2}_{0}(\Gamma_{i}),\BS{H}^{2}(\Omega))$ such that, for all $\boldsymbol{\omega}\in \BS{H}^{3/2}_{0}(\Gamma_{i})$, $\wg=\Phi_i(\boldsymbol{\omega})$ satisfies
\begin{equation}\label{chap2-liftI}
\begin{aligned}
&\DIV\wg=0\text{ in }\Omega,\\
&\wg=\boldsymbol{\omega}\text{ on }\Gamma_{i},\,\wg=0\text{ on }\Gamma_{s}\cup\Gamma_{b},\,w_{2}=0\text{ on }\Gamma_{o}.
\end{aligned}
\end{equation}
\end{lemma}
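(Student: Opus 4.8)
The plan is to follow the proof of Lemma~\ref{chap2-lemme-gammas}: extend $\Omega$ to $\Omega_e=(0,2L)\times(0,1)$ (reflection across $\Gamma_o$), solve a Dirichlet Stokes problem on $\Omega_e$, symmetrize, and restrict back to $\Omega$. What changes is the choice of boundary data on $\partial\Omega_e$, since now the non-homogeneous datum sits on $\Gamma_i$, which remains on $\partial\Omega_e$ as its left side. The obstruction to solving the Dirichlet Stokes problem directly on $\Omega$ is the (generally nonzero) flux $\int_{\Gamma_i}\boldsymbol{\omega}\cdot\normal$, and the role of the extension is to absorb it. Writing $\boldsymbol{\omega}=(\omega_1,\omega_2)$, I would set $\Gamma_{s,e}=(0,2L)\times\{1\}$, $\Gamma_{b,e}=(0,2L)\times\{0\}$, $\Gamma_{o,e}=\{2L\}\times(0,1)$ and prescribe on $\partial\Omega_e$ the datum $\hat{\boldsymbol{\omega}}=\boldsymbol{\omega}$ on $\Gamma_i$, $\hat{\boldsymbol{\omega}}(2L,y)=(\omega_1(y),-\omega_2(y))$ on $\Gamma_{o,e}$, and $\hat{\boldsymbol{\omega}}=0$ on $\Gamma_{s,e}\cup\Gamma_{b,e}$. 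Two features make this work: the outward normal on $\Gamma_{o,e}$ is opposite to the one on $\Gamma_i$, so $\int_{\partial\Omega_e}\hat{\boldsymbol{\omega}}\cdot\normal=-\int_0^1\omega_1(y)\,dy+\int_0^1\omega_1(y)\,dy=0$; and $\hat{\boldsymbol{\omega}}$ is invariant under $(x,y)\mapsto(2L-x,y)$ composed with a sign flip of the second component.

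Next I would solve $-\nu\Delta\vg+\nabla q=0$, $\DIV\vg=0$ in $\Omega_e$, $\vg=\hat{\boldsymbol{\omega}}$ on $\partial\Omega_e$, obtaining $(\vg,q)\in\BS{H}^2(\Omega_e)\times H^1(\Omega_e)$ with $\|\vg\|_{\BS{H}^2(\Omega_e)}\le C\|\boldsymbol{\omega}\|_{\BS{H}^{3/2}_0(\Gamma_i)}$ (the edgewise $\BS{H}^{3/2}$-norms of $\hat{\boldsymbol{\omega}}$ are controlled by this quantity). I would then introduce $\vg_s(x,y)=\mathrm{diag}(1,-1)\,\vg(2L-x,y)$ and check, by a short computation, that $\vg_s\in\BS{H}^2(\Omega_e)$, $\DIV\vg_s=0$ in $\Omega_e$, and $\vg_s=\hat{\boldsymbol{\omega}}$ on $\partial\Omega_e$ (this is exactly where the symmetry of $\hat{\boldsymbol{\omega}}$ is used: on $\Gamma_i$ one gets $\vg_s(0,y)=(v_1(2L,y),-v_2(2L,y))=(\omega_1(y),\omega_2(y))$, and similarly on $\Gamma_{o,e}$, while $\vg_s=0$ on $\Gamma_{s,e}\cup\Gamma_{b,e}$). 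Hence $\hat{\vg}:=\tfrac12(\vg+\vg_s)$ is again an $\BS{H}^2$, divergence-free field matching $\hat{\boldsymbol{\omega}}$ on $\partial\Omega_e$, with the additional property $\hat{v}_2(L,y)=\tfrac12\bigl(v_2(L,y)-v_2(L,y)\bigr)=0$ for $y\in(0,1)$. I would then define $\Phi_i(\boldsymbol{\omega})=\wg:=\hat{\vg}_{|\Omega}$ and read off \eqref{chap2-liftI}: $\DIV\wg=0$ in $\Omega$, $\wg=\boldsymbol{\omega}$ on $\Gamma_i$, $\wg=0$ on $\Gamma_s\cup\Gamma_b\subset\Gamma_{s,e}\cup\Gamma_{b,e}$, and $w_2=0$ on $\Gamma_o=\{L\}\times(0,1)$. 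Linearity of $\Phi_i$ is immediate from the construction, and continuity follows from $\|\wg\|_{\BS{H}^2(\Omega)}\le\|\hat{\vg}\|_{\BS{H}^2(\Omega_e)}\le\|\vg\|_{\BS{H}^2(\Omega_e)}\le C\|\boldsymbol{\omega}\|_{\BS{H}^{3/2}_0(\Gamma_i)}$.

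The one point deserving care, and the main obstacle, is the $\BS{H}^2$-regularity of $\vg$ up to the four corners of $\Omega_e$, where $\hat{\boldsymbol{\omega}}$ only lies in $\BS{H}^{3/2}_0$ and not in the smaller space $\BS{H}^{3/2}_{00}$ that appeared in Lemma~\ref{chap2-lemme-gammas}. The reason the weaker space suffices here is that, unlike in Lemma~\ref{chap2-lemme-gammas}, the non-homogeneous datum is not reflected \emph{within} an edge (which is what forced the $00$-condition there), but merely copied, up to a sign, from $\Gamma_i$ to the opposite side $\Gamma_{o,e}$; the edgewise $\BS{H}^{3/2}$-regularity is thereby preserved, and $\hat{\boldsymbol{\omega}}$ is continuous and vanishes at all four corners. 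Since each of these corners has aperture $\frac{\pi}{2}$, the dihedron estimates of \cite{MR2641539} --- invoked in exactly the same way as for the corner $C_{0,1}$ in the discussion of the inflow condition in Section~\ref{function.spaces} --- give that the matching condition alone is enough for $\BS{H}^2$-regularity near each corner. This makes the whole construction go through with $\boldsymbol{\omega}\in\BS{H}^{3/2}_0(\Gamma_i)$, as required.
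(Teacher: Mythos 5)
Your proof is correct, but it follows a genuinely different route from the paper's. The paper does not extend the domain here: it first compensates the flux $\int_{\Gamma_i}\boldsymbol{\omega}\cdot\normal$ by an auxiliary datum $\boldsymbol{\omega}^{-}=-\bigl(\varphi/\int_{\Gamma_s}\varphi\bigr)\bigl(\int_{\Gamma_i}\boldsymbol{\omega}\cdot\normal\bigr)\textbf{e}_2$ with $\varphi\in\mathcal{C}^{\infty}_{0}(\Gamma_s)$, solves the Dirichlet Stokes problem on $\Omega$ itself with data $\boldsymbol{\omega}$ on $\Gamma_i$, $\boldsymbol{\omega}^{-}$ on $\Gamma_s$ and $0$ on $\Gamma_b\cup\Gamma_o$ (so $w_2=0$ on $\Gamma_o$ comes for free from the full Dirichlet condition there), and then subtracts $\Phi_s(\boldsymbol{\omega}^{-})$ from Lemma \ref{chap2-lemme-gammas} to clear the residual trace on $\Gamma_s$. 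Your construction instead transplants the extension-and-symmetrization mechanism of Lemma \ref{chap2-lemme-gammas} to the inflow datum, cancelling the flux through the mirrored edge $\Gamma_{o,e}$ and producing $w_2=0$ on $\Gamma_o$ by averaging with the reflected solution; I checked that the reflected field $\vg_s$ (with pressure $-q(2L-x,y)$) does solve the same boundary value problem, so the argument closes. The paper's route is shorter because it reuses $\Phi_s$ as a black box and never leaves $\Omega$, at the cost of introducing the cutoff $\varphi$; your route is self-contained and uniform with the previous lemma. Both rest on the same ingredient from \cite{MR2641539}, namely $\BS{H}^2$-regularity for the Dirichlet Stokes problem in a rectangle with edgewise $H^{3/2}$ data matching at the right-angle corners, and your observation that $\BS{H}^{3/2}_{0}(\Gamma_i)$ suffices --- because the datum is never glued to its reflection \emph{within} a single edge, which is what forced $\mathcal{H}^{3/2}_{00}$ in Lemma \ref{chap2-lemme-gammas} --- is exactly right and is the one point that needed to be addressed.
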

\begin{proof}
Once again we construct $\wg$ by solving a Stokes system with Dirichlet boundary conditions. First, we have to compensate the non-zero average of $\boldsymbol{\omega}\cdot\normal$ on $\Gamma_{i}$. Consider the function $\boldsymbol{\omega}^{-}\in \mathcal{H}^{3/2}_{00}(\Gamma_{s})$ defined by
\[\boldsymbol{\omega}^{-}(x)=-\frac{\varphi(x)}{\int_{\Gamma_{s}}\varphi}\left(\int_{\Gamma_{i}}\boldsymbol{\omega}\cdot\normal\right)\textbf{e}_{2},\quad\forall x\in(0,L),\]
where $\varphi\in \mathcal{C}^{\infty}_{0}(\Gamma_{s})$ satisfies $\int_{\Gamma_{s}}\varphi\neq 0$.
Consider then the system
\[\begin{aligned}
&-\nu\Delta\vg + \nabla q =0,\,\,\,\DIV\vg =0 \text{ in }\Omega,\\
&\vg =\boldsymbol{\omega}\text{ on }\Gamma_{i},\,\vg=\boldsymbol{\omega}^{-}\text{ on }\Gamma_{s},\,\vg=0\text{ on }\Gamma_{b}\cup\Gamma_{o}.
\end{aligned}
\]
Using \cite{MR2641539}, we obtain a solution $(\vg,q)\in \BS{H}^{2}(\Omega)\times H^{1}(\Omega)$ to this system. Finally $\wg=\vg-\Phi_{s}(\boldsymbol{\omega}^{-})$ satisfies \eqref{chap2-liftI}. Once again, the linearity of $\Phi_i:\boldsymbol{\omega}\mapsto \wg$ is trivial by construction, and its continuity follows from the classical estimates for the Stokes equations with Dirichlet boundary conditions, and from the construction of $\boldsymbol{\omega}^{-}$.
\end{proof}

We can now specify the regularity results for \eqref{chap2-stokes-nh}.
\begin{theorem}\label{chap2-regularity.stokes}
For all $(\BS{f},\BS{g},\boldsymbol{\omega})\in \BS{L}^{2}(\Omega)\times \mathcal{H}^{3/2}_{00}(\Gamma_{s})\times \BS{H}^{3/2}_{0}(\Gamma_{i})$, \eqref{chap2-stokes-nh} admits a unique solution $(\ug,p)\in\BS{H}^{2}(\Omega)\times H^{1}(\Omega)$ which satisfies
\[\norme{\ug}_{\BS{H}^{2}(\Omega)}+\norme{p}_{H^{1}(\Omega)}\leq C(\norme{\BS{f}}_{\BS{L}^{2}(\Omega)}+\norme{\BS{g}}_{\mathcal{H}^{3/2}_{00}(\Gamma_{s})}+\norme{\boldsymbol{\omega}}_{\BS{H}^{3/2}_{0}(\Gamma_{i})}).\]
\end{theorem}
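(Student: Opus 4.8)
The plan is to reduce the non-homogeneous problem \eqref{chap2-stokes-nh} to a homogeneous one by subtracting the lifts constructed in Lemmas \ref{chap2-lemme-gammas} and \ref{chap2-lemme-gammai}. Set $\wg=\Phi_s(\BS{g})+\Phi_i(\boldsymbol{\omega})\in\BS{H}^2(\Omega)$, which is divergence-free and satisfies $\wg=\BS{g}$ on $\Gamma_s$, $\wg=\boldsymbol{\omega}$ on $\Gamma_i$, $\wg=0$ on $\Gamma_b$, and $w_2=0$ on $\Gamma_o$. Write $\ug=\tilde{\ug}+\wg$ and $p=\tilde{p}$; then $(\tilde{\ug},\tilde p)$ must solve
\begin{equation*}
\lambda\tilde{\ug}-\nu\Delta\tilde{\ug}+\nabla\tilde p=\BS{f}-\lambda\wg+\nu\Delta\wg=:\tilde{\BS{f}}\in\BS{L}^2(\Omega),\quad\DIV\tilde{\ug}=0\text{ in }\Omega,
\end{equation*}
with $\tilde{\ug}=0$ on $\Gamma_d$, and $\tilde u_2=0$, $\tilde p=0$ on $\Gamma_o$; that is, the homogeneous system \eqref{chap2-Stokes-def} with a shift $\lambda$ and right-hand side $\tilde{\BS{f}}$.

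Next I would invoke the regularity theory for the homogeneous Stokes system with mixed boundary conditions — the result quoted as analogous to \cite[Theorem 5.4]{2017arXiv170706382C} and underlying the definition of $(A_s,\mathcal{D}(A_s))$. That theory gives, for $\tilde{\BS{f}}\in\BS{L}^2(\Omega)$, a unique solution $(\tilde{\ug},\tilde p)\in(\BS{H}^2(\Omega)\cap V)\times H^1(\Omega)$ (the pressure being determined up to the normalization $\tilde p=0$ on $\Gamma_o$), together with the estimate $\norme{\tilde{\ug}}_{\BS{H}^2(\Omega)}+\norme{\tilde p}_{H^1(\Omega)}\leq C\norme{\tilde{\BS{f}}}_{\BS{L}^2(\Omega)}$. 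For the $\lambda$-shift one can either absorb $\lambda\tilde{\ug}$ into the right-hand side via a fixed-point/perturbation argument (using that $A_s$ generates an analytic semigroup, so $\lambda I-A_s$ is boundedly invertible for $\lambda$ in a suitable sector, and for the remaining bounded range of $\lambda$ by a compactness/Fredholm argument since the resolvent of $A_s$ is compact), or simply note that the paper only needs this for the relevant $\lambda$ and that the $\BS{H}^2\times H^1$ estimate is uniform on compact $\lambda$-sets. Since the statement does not quantify dependence on $\lambda$, it suffices to treat $\lambda$ as fixed and fold $\norme{\lambda\wg}_{\BS{L}^2}$ into the constant. Combining, $\norme{\tilde{\BS{f}}}_{\BS{L}^2(\Omega)}\leq\norme{\BS{f}}_{\BS{L}^2(\Omega)}+C\norme{\wg}_{\BS{H}^2(\Omega)}\leq\norme{\BS{f}}_{\BS{L}^2(\Omega)}+C(\norme{\BS{g}}_{\mathcal{H}^{3/2}_{00}(\Gamma_s)}+\norme{\boldsymbol{\omega}}_{\BS{H}^{3/2}_0(\Gamma_i)})$ by the continuity of $\Phi_s,\Phi_i$, and then the triangle inequality $\norme{\ug}_{\BS{H}^2}\leq\norme{\tilde{\ug}}_{\BS{H}^2}+\norme{\wg}_{\BS{H}^2}$ yields the claimed estimate.

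Uniqueness for \eqref{chap2-stokes-nh} follows by linearity: the difference of two solutions solves the homogeneous system with $\BS{f}=0$, $\BS{g}=0$, $\boldsymbol{\omega}=0$, hence vanishes by the uniqueness part of the homogeneous regularity theorem (or by a direct energy estimate, testing with $\ug$ in $V$: the boundary terms on $\Gamma_o$ drop because $p=0$ there and $u_2=0$ there kills the remaining shear contribution, giving $\mathrm{Re}\,\lambda\norme{\ug}_{\BS{L}^2}^2+\nu\norme{\nabla\ug}_{\BS{L}^2}^2\leq 0$, whence $\ug=0$ and then $\nabla p=0$ with $p=0$ on $\Gamma_o$ forces $p=0$).

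The step I expect to be the main obstacle is the $\BS{H}^2$ elliptic regularity near the corners $C_{0,1}=(0,1)$ and $C_{L,1}=(L,1)$ where the boundary condition changes type — Dirichlet on $\Gamma_i$ and $\Gamma_s$ meeting at a right angle, and the mixed Dirichlet/pressure condition on $\Gamma_o$ meeting $\Gamma_s$ and $\Gamma_b$. This is precisely where the choices $\boldsymbol{\omega}\in\BS{H}^{3/2}_0(\Gamma_i)$ and $\BS{g}\in\mathcal{H}^{3/2}_{00}(\Gamma_s)$ with matching trace $\boldsymbol{\omega}(C_{0,1})=\BS{g}(C_{0,1})=0$ are forced, so that the compatibility conditions of the dihedral-corner theory of \cite{MR2641539} are met and the $\frac{\pi}{2}$-angle does not generate a singular exponent below $2$. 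In the reduction above this is handled cleanly once the lifts $\Phi_s,\Phi_i$ are known to land in $\BS{H}^2(\Omega)$ — which is exactly the content of Lemmas \ref{chap2-lemme-gammas}–\ref{chap2-lemme-gammai} — and the homogeneous regularity result, so the theorem itself is essentially an assembly; the corner analysis has already been localized into those lemmas and into the cited homogeneous estimate.
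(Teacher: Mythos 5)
Your proof is correct and takes essentially the same route as the paper's: subtract the lift $\Phi_s(\BS{g})+\Phi_i(\boldsymbol{\omega})$ from Lemmas \ref{chap2-lemme-gammas}--\ref{chap2-lemme-gammai}, reduce to the homogeneous mixed-boundary Stokes system with right-hand side $\BS{f}+\nu\Delta\wg-\lambda\wg\in\BS{L}^2(\Omega)$, and conclude by the homogeneous $\BS{H}^2\times H^1$ regularity theory (treating $\lambda\vg$ as a lower-order perturbation). The only material you add is the explicit uniqueness/energy argument, which the paper leaves implicit.
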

\begin{proof}
Consider $\vg=\ug -\Phi_{s}(\BS{g})-\Phi_{i}(\boldsymbol{\omega})$. The pair $(\vg,p)$ is solution to
\[
\begin{aligned}
&\lambda\vg-\nu\Delta\vg+\nabla p=\hat{\BS{f}},\,\,\,\DIV\vg=0\text{ in }\Omega,\\
&\vg=0\,\text{ on }\Gamma_{d},\,u_{2}=0\,\text{ and }p=0\text{ on }\Gamma_{o},
\end{aligned}
\]
with $\hat{\BS{f}}=\BS{f}+\nu\Delta\Phi_{s}(\BS{g})+\nu\Delta\Phi_{i}(\boldsymbol{\omega})-\lambda\Phi_{s}(\BS{g})-\lambda\Phi_{i}(\boldsymbol{\omega}) \in \BS{L}^2(\Omega)$. The $\BS{H}^{2}$-regularity of $\vg$ in a neighbourhood of $\Gamma_{i}$ is well known for Stokes with homogeneous Dirichlet conditions. The lower order term $\lambda\vg$ does not impact the regularity of the system and can be dealt with a bootstrap argument. The regularity on a neighbourhood of $\Gamma_{o}$ is proved in \cite[Theorem 5.4]{2017arXiv170706382C}. Hence, $(\vg,p)\in\BS{H}^{2}(\Omega)\times H^{1}(\Omega)$, and thus $(\ug,p)\in\BS{H}^{2}(\Omega)\times H^{1}(\Omega)$ with the desired estimates.
\end{proof}
We introduce the lifting operators:
\begin{itemize}
\item $L\in \mathcal{L}(\mathcal{H}^{3/2}_{00}(\Gamma_{s}),\BS{H}^{2}(\Omega)\times H^{1}(\Omega))$ defined by
\[L(\BS{g})=(L_{1}(\BS{g}),L_{2}(\BS{g}))=(\BS{w}_{1},\rho_{1}),\]
where $(\BS{w}_{1},\rho_{1})$ is solution to (\ref{chap2-stokes-nh}) with $(\BS{f},\BS{g},\boldsymbol{\omega})=(\boldsymbol{0},\BS{g},\boldsymbol{0})$ and $\lambda=0$.
\item $L_{\Gamma_{i}}\in \mathcal{L}(\BS{H}^{3/2}_{0}(\Gamma_{i}),\BS{H}^{2}(\Omega)\times H^{1}(\Omega))$ defined by
\[L_{\Gamma_{i}}(\boldsymbol{\omega})=(L_{\Gamma_{i},1}(\boldsymbol{\omega}),L_{\Gamma_{i},2}(\boldsymbol{\omega}))=(\BS{w}_{2},\rho_2),\]
where $(\BS{w}_{2},\rho_2)$ is the solution to (\ref{chap2-stokes-nh}) with $(\BS{f},\BS{g},\boldsymbol{\omega})=(\boldsymbol{0},\boldsymbol{0},\boldsymbol{\omega})$ and $\lambda=0$.
\item $L_{\Gamma_{o}}\in\mathcal{L}(H^{1/2}(\Gamma_{o}),H^{1}(\Omega))$ a continuous lifting operator.
\end{itemize}
In order to express the pressure, we also consider the operators:
\begin{itemize}
\item $N_{s}\in\mathcal{L}(\mathcal{H}^{3/2}_{00}(\Gamma_{s}),H^{3}(\Omega))$ defined by $N_{s}(\BS{g})=p_{1}$ with 
\begin{equation}
\begin{aligned}
&\Delta p_{1}=0\,\text{ in }\Omega,\\
&\frac{\partial p_{1}}{\partial\normal}=\BS{g}\cdot\normal\,\text{ on }\Gamma_{s},\\
&\frac{\partial p_{1}}{\partial\normal}=0\,\text{ on }\Gamma_{i}\cup\Gamma_{b},\\
&p_{1}=0\,\text{ on }\Gamma_{o}.
\end{aligned}
\end{equation}
\item $N_{v}\in\mathcal{L}(\BS{H}^{2}(\Omega),H^{1}(\Omega))$ defined by $N_{v}(\ug)=p_{2}$ with
\begin{align*}
&\Delta p_{2}=0\,\text{ in }\Omega,\\
&\frac{\partial p_{2}}{\partial\normal}=\nu\Delta\Pi\ug\cdot\normal\,\text{ on }\Gamma_{d},\\
&p_{2}=0\,\text{ on }\Gamma_{o}.
\end{align*}
\item $N_{p}\in\mathcal{L}(\BS{L}^{2}(\Omega),H^{1}_{\Gamma_{o}}(\Omega))$ defined by $N_{p}(\BS{f})=p_{3}$ with $(I-\Pi)\BS{f}=\nabla p_{3}$.
\end{itemize}
\begin{lemma}\label{chap2-reg-Ns-Ndiv}
The operator $N_{s}$ can be extended as follows:
\begin{itemize}
\item $N_{s}\in \mathcal{L}((\mathcal{H}^{3/2}(\Gamma_{s}))',L^{2}(\Omega))$.
\item $N_{s}\in\mathcal{L}((\mathcal{H}^{1/2}(\Gamma_{s}))',H^{1}(\Omega))$.
\end{itemize}
\end{lemma}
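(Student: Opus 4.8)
The plan is to obtain both extensions of $N_{s}$ by duality, reducing each time to a trace estimate for an auxiliary mixed Laplace problem whose regularity is already contained in the theory used for \eqref{chap2-Stokes-def}. The starting observation is that on $\Gamma_{s}$ the outward unit normal is $\textbf{e}_{2}$, so $\BS{g}\cdot\normal$ is simply the second component of $\BS{g}$; moreover, since the boundary condition entering the definition of $N_{s}$ is of Neumann type on $\Gamma_{s}$, the traces on $\Gamma_{s}$ produced by the auxiliary problems carry no endpoint constraint, so they live in the full spaces $H^{\kappa}(\Gamma_{s})$ and pair correctly with the duals $(\mathcal{H}^{\kappa}(\Gamma_{s}))'$ (scalars on $\Gamma_{s}$ being identified with their embedding $\{0\}\times(\cdot)$).

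For the second extension I would use the natural variational formulation: given $\BS{g}$ with $\BS{g}\cdot\normal\in(\mathcal{H}^{1/2}(\Gamma_{s}))'$, look for $p_{1}\in H^{1}_{\Gamma_{o}}(\Omega)$ such that
\[\int_{\Omega}\nabla p_{1}\cdot\nabla\phi\,dx=\langle\BS{g}\cdot\normal,\phi_{\vert\Gamma_{s}}\rangle_{(\mathcal{H}^{1/2}(\Gamma_{s}))',\mathcal{H}^{1/2}(\Gamma_{s})}\qquad\text{for all }\phi\in H^{1}_{\Gamma_{o}}(\Omega).\]
The right-hand side is a bounded functional of $\phi$, with norm controlled by $\norme{\BS{g}\cdot\normal}_{(\mathcal{H}^{1/2}(\Gamma_{s}))'}$, because the trace operator $H^{1}(\Omega)\to H^{1/2}(\Gamma_{s})$ is bounded; and the bilinear form is coercive on $H^{1}_{\Gamma_{o}}(\Omega)$ by the Poincaré inequality, since functions there vanish on $\Gamma_{o}$, which has positive measure. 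Lax--Milgram then yields a unique $p_{1}$ with $\norme{p_{1}}_{H^{1}(\Omega)}\leq C\norme{\BS{g}\cdot\normal}_{(\mathcal{H}^{1/2}(\Gamma_{s}))'}$. An integration by parts shows that for $\BS{g}\in\mathcal{H}^{3/2}_{00}(\Gamma_{s})$ this $p_{1}$ coincides with the previously defined $N_{s}(\BS{g})$; since $\mathcal{H}^{3/2}_{00}(\Gamma_{s})$ contains $\{0\}\times\mathcal{C}^{\infty}_{0}(\Gamma_{s})$ and is therefore dense in $(\mathcal{H}^{1/2}(\Gamma_{s}))'$, the map just built is the unique continuous extension, giving $N_{s}\in\mathcal{L}((\mathcal{H}^{1/2}(\Gamma_{s}))',H^{1}(\Omega))$.

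For the first extension I would argue by transposition against $L^{2}(\Omega)$. Given $\phi\in L^{2}(\Omega)$, let $\psi\in H^{1}_{\Gamma_{o}}(\Omega)$ solve the adjoint mixed problem $-\Delta\psi=\phi$ in $\Omega$, $\partial_{\normal}\psi=0$ on $\Gamma_{d}$, $\psi=0$ on $\Gamma_{o}$; this problem has exactly the boundary structure of the one defining $N_{s}$, so the elliptic regularity used for \eqref{chap2-Stokes-def} gives $\psi\in H^{2}(\Omega)$ with $\norme{\psi}_{H^{2}(\Omega)}\leq C\norme{\phi}_{L^{2}(\Omega)}$, hence $\norme{\psi_{\vert\Gamma_{s}}}_{H^{3/2}(\Gamma_{s})}\leq C\norme{\phi}_{L^{2}(\Omega)}$. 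For $\BS{g}\cdot\normal\in(\mathcal{H}^{3/2}(\Gamma_{s}))'$ I then define $p_{1}=N_{s}(\BS{g})$ by
\[\int_{\Omega}p_{1}\,\phi\,dx=\langle\BS{g}\cdot\normal,\psi_{\vert\Gamma_{s}}\rangle_{(\mathcal{H}^{3/2}(\Gamma_{s}))',\mathcal{H}^{3/2}(\Gamma_{s})}\qquad\text{for all }\phi\in L^{2}(\Omega).\]
The right-hand side is bounded by $C\norme{\BS{g}\cdot\normal}_{(\mathcal{H}^{3/2}(\Gamma_{s}))'}\norme{\phi}_{L^{2}(\Omega)}$, so by the Riesz representation theorem there is a unique $p_{1}\in L^{2}(\Omega)$ with this property and $\norme{p_{1}}_{L^{2}(\Omega)}\leq C\norme{\BS{g}\cdot\normal}_{(\mathcal{H}^{3/2}(\Gamma_{s}))'}$. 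Green's formula again gives consistency with $N_{s}$ on the dense subspace $\mathcal{H}^{3/2}_{00}(\Gamma_{s})$, whence $N_{s}\in\mathcal{L}((\mathcal{H}^{3/2}(\Gamma_{s}))',L^{2}(\Omega))$.

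The two variational set-ups, the Lax--Milgram/Riesz steps, and the consistency checks by integration by parts are routine. The one point deserving care is the $H^{2}$-regularity of the adjoint mixed Laplace problem near the corners, in particular at the Dirichlet--Neumann corners where $\Gamma_{o}$ meets $\Gamma_{s}$ and $\Gamma_{b}$; but since every corner angle equals $\pi/2$, the leading corner exponent for this problem is $1$ and the corresponding singular function is smooth, so there is no loss of regularity and the statement is covered by the results invoked for \eqref{chap2-Stokes-def} (see \cite[Theorem 5.4]{2017arXiv170706382C} and \cite{MR2641539}). Alternatively, the second extension can be recovered from the first by interpolation with the defining property $N_{s}\in\mathcal{L}(\mathcal{H}^{3/2}_{00}(\Gamma_{s}),H^{3}(\Omega))$, but the half-integer Lions--Magenes subtleties make the direct argument above cleaner.
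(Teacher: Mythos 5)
Your proposal is correct, and it fills in what the paper dispatches in one line (``the first result is obtained by duality; the second follows from interpolation techniques''). For the first extension you do exactly what the paper intends: transpose against the adjoint mixed Neumann--Dirichlet problem, use its $H^{2}$-regularity to bound the trace $\psi_{\vert\Gamma_{s}}$ in $H^{3/2}(\Gamma_{s})$ by $\norme{\phi}_{L^{2}(\Omega)}$, and conclude by Riesz; your corner discussion is the right one (the mixed corners at $\Gamma_{o}\cap\Gamma_{s}$ and $\Gamma_{o}\cap\Gamma_{b}$ have opening $\pi/2$, so the leading exponent is $1$ with a polynomial singular function, and the Neumann--Neumann corners are convex, so the $H^{2}$ theory invoked for \eqref{chap2-Stokes-def} applies). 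For the second extension you diverge from the paper: instead of interpolating (say, between $N_{s}\in\mathcal{L}((\mathcal{H}^{3/2}(\Gamma_{s}))',L^{2}(\Omega))$ and a map into $H^{2}(\Omega)$, which requires identifying $[(\mathcal{H}^{3/2})',\,\cdot\,]_{1/2}$ with $(\mathcal{H}^{1/2})'$ and carries the half-integer Lions--Magenes caveats you mention), you solve the Neumann problem directly by Lax--Milgram in $H^{1}_{\Gamma_{o}}(\Omega)$, where the data only need to act on traces in $H^{1/2}(\Gamma_{s})$. This is more elementary and self-contained (coercivity from Poincar\'e on $H^{1}_{\Gamma_{o}}(\Omega)$, consistency by Green's formula, uniqueness of the extension from the density of $\{0\}\times\mathcal{C}^{\infty}_{0}(\Gamma_{s})$ in $(\mathcal{H}^{1/2}(\Gamma_{s}))'$), at the cost of not exhibiting the family of intermediate mapping properties that interpolation would give for free; for the two endpoints actually stated in the lemma, either route works.
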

\begin{proof}
The first result is obtained by duality. The second follows from interpolation techniques.
\end{proof}

To prepare the matrix formulation of the fluid--structure system, we recast the Stokes system in terms of $\Pi\ug$ and $(I-\Pi)\ug$.

\begin{theorem}\label{chap2-reformulation.stokes}
Suppose that $\boldsymbol{\omega}=0$ and $(\BS{f},\BS{g})\in\BS{L}^{2}(\Omega)\times\mathcal{H}^{3/2}_{00}(\Gamma_{s})$. A pair $(\ug,p)$ is solution to \eqref{chap2-stokes-nh} if and only if
\begin{equation}\label{chap2-equiv.stokes}
\begin{aligned}
&\lambda\Pi\ug-A_{s}\Pi\ug + A_{s}\Pi L_{1}(\BS{g})=\Pi\BS{f},\\
&(I-\Pi)\ug=\nabla N_{s}(\BS{g}),\\
&p=-\lambda N_{s}(\BS{g})+N_{v}(\Pi\ug)+N_{p}(\BS{f}).
\end{aligned}
\end{equation}
\end{theorem}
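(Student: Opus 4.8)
The plan is to read off the equivalence by splitting every term of the momentum equation along the orthogonal decomposition $\BS{L}^{2}(\Omega)=\BS{V}^{0}_{n,\Gamma_{d}}(\Omega)\oplus\mathrm{grad}\,H^{1}_{\Gamma_{o}}(\Omega)$, i.e.\ by applying $\Pi$ and $I-\Pi$ separately. By Theorem~\ref{chap2-regularity.stokes} a solution of \eqref{chap2-stokes-nh} has $(\ug,p)\in\BS{H}^{2}(\Omega)\times H^{1}(\Omega)$, so that $\Delta\ug,\nabla p\in\BS{L}^{2}(\Omega)$ and all the elliptic problems defining $N_{s}$, $N_{v}$, $N_{p}$ and the lift $L_{1}$ are meaningful; conversely, the data in \eqref{chap2-equiv.stokes} produce an $\BS{H}^{2}\times H^{1}$ pair, using $N_{s}(\BS{g})\in H^{3}(\Omega)$ and $N_{v}(\Pi\ug)\in H^{1}(\Omega)$. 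The key elementary remark, used repeatedly, is that $p$, $\rho_{1}=L_{2}(\BS{g})$, $N_{s}(\BS{g})$, $N_{v}(\Pi\ug)$ and $N_{p}(\BS{f})$ all vanish on $\Gamma_{o}$, hence their gradients lie in $\mathrm{grad}\,H^{1}_{\Gamma_{o}}(\Omega)$, so that $\Pi$ annihilates them and $I-\Pi$ leaves them unchanged.

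First I would obtain the second line of \eqref{chap2-equiv.stokes}. Since $\DIV\ug=0$, the $\mathrm{grad}\,H^{1}_{0}$-part of $(I-\Pi)\ug$ in \eqref{chap2-equation-Pif} vanishes, so $(I-\Pi)\ug=\nabla q_{\ug}$ with $q_{\ug}$ harmonic, $q_{\ug}=0$ on $\Gamma_{o}$ and normal derivative $\ug\cdot\normal$ on $\Gamma_{d}$; since $\ug\cdot\normal=\BS{g}\cdot\normal$ on $\Gamma_{s}$ and $0$ on $\Gamma_{i}\cup\Gamma_{b}$, uniqueness for this Neumann problem gives $q_{\ug}=N_{s}(\BS{g})$, i.e.\ $(I-\Pi)\ug=\nabla N_{s}(\BS{g})$. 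The same computation for $L_{1}(\BS{g})$ (which has the same traces on $\Gamma_{d}$) gives $(I-\Pi)L_{1}(\BS{g})=\nabla N_{s}(\BS{g})$, whence $\Pi\ug-\Pi L_{1}(\BS{g})=\ug-L_{1}(\BS{g})$; this difference vanishes on $\Gamma_{d}$, has zero second component on $\Gamma_{o}$ and is divergence free, so it lies in $\mathcal{D}(A_{s})=\BS{H}^{2}(\Omega)\cap V$ (whereas $\Pi\ug$ and $\Pi L_{1}(\BS{g})$ separately need not, their tangential traces on $\Gamma_{i}\cup\Gamma_{b}$ being uncontrolled). Applying $\Pi$ to $\lambda\ug-\nu\Delta\ug+\nabla p=\BS{f}$ and using $\Pi\nabla p=0$ gives $\lambda\Pi\ug-\nu\Pi\Delta\ug=\Pi\BS{f}$; and $\nu\Delta L_{1}(\BS{g})=\nabla\rho_{1}$ together with $\rho_{1}=0$ on $\Gamma_{o}$ yields $\nu\Pi\Delta L_{1}(\BS{g})=0$, so $\nu\Pi\Delta\ug=\nu\Pi\Delta\bigl(\ug-L_{1}(\BS{g})\bigr)=A_{s}\bigl(\Pi\ug-\Pi L_{1}(\BS{g})\bigr)$. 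This is the first line of \eqref{chap2-equiv.stokes}, the term $A_{s}\Pi\ug$ there being understood through this difference (or through the usual extension of $A_{s}$).

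Next I would recover the pressure. Applying $I-\Pi$ to $\nabla p=\BS{f}-\lambda\ug+\nu\Delta\ug$ and using $(I-\Pi)\nabla p=\nabla p$, I identify the three contributions: $(I-\Pi)\BS{f}=\nabla N_{p}(\BS{f})$ by definition of $N_{p}$; $(I-\Pi)\ug=\nabla N_{s}(\BS{g})$ by the previous step; and, since $\Delta N_{s}(\BS{g})=0$ gives $\Delta\ug=\Delta\Pi\ug$, the same Neumann-problem identification (now with data $\nu\Delta\Pi\ug\cdot\normal$ on $\Gamma_{d}$, the $H^{1}_{0}$-part again vanishing because $\DIV\Pi\ug=0$) shows $\nu(I-\Pi)\Delta\ug=\nu(I-\Pi)\Delta\Pi\ug=\nabla N_{v}(\Pi\ug)$. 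Hence $\nabla p=\nabla\bigl(N_{p}(\BS{f})-\lambda N_{s}(\BS{g})+N_{v}(\Pi\ug)\bigr)$, and since $p$ and these three pressures all vanish on $\Gamma_{o}$ the two sides coincide, which is the third line of \eqref{chap2-equiv.stokes}.

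For the converse one starts from a triple satisfying \eqref{chap2-equiv.stokes}, sets $\ug:=\Pi\ug+\nabla N_{s}(\BS{g})$ and defines $p$ by the third line, and then runs the above identities backwards: $\DIV\ug=0$ and $\lambda\ug-\nu\Delta\ug+\nabla p=\BS{f}$ follow by recombining the $\Pi$- and $(I-\Pi)$-parts, $p=0$ on $\Gamma_{o}$ is built into the formula, and the boundary values on $\Gamma_{d}$ (and $u_{2}=0$ on $\Gamma_{o}$) come from writing $\ug=\bigl(\Pi\ug-\Pi L_{1}(\BS{g})\bigr)+L_{1}(\BS{g})$, the first summand lying in $\mathcal{D}(A_{s})\subset V$ and $L_{1}(\BS{g})$ carrying exactly the prescribed traces. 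The only non-bookkeeping ingredient — and the step to be careful with — is the repeated identification of a gradient component $(I-\Pi)(\cdot)$ with the gradient of the explicit harmonic function $N_{s}(\BS{g})$ or $N_{v}(\Pi\ug)$, which rests on the precise form of \eqref{chap2-equation-Pif} and on divergence-freeness killing the $H^{1}_{0}$-component; once this is in place, everything else is immediate from Theorem~\ref{chap2-regularity.stokes}.
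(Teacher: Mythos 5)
Your proposal is correct and follows essentially the same route as the paper: apply $\Pi$ and $I-\Pi$ to the momentum equation, use the lift $L_{1}(\BS{g})$ so that $\ug-L_{1}(\BS{g})\in\mathcal{D}(A_{s})$ (with $A_{s}\Pi\ug$ understood via the extrapolated operator), identify $(I-\Pi)\ug$ with $\nabla N_{s}(\BS{g})$ through \eqref{chap2-equation-Pif}, and read off the pressure from the $(I-\Pi)$-component. You merely spell out in more detail the identifications (in particular $\nu\Pi\Delta L_{1}(\BS{g})=0$ and $\nu(I-\Pi)\Delta\ug=\nabla N_{v}(\Pi\ug)$) that the paper leaves implicit.
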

\begin{proof}
Remark that $\ug - L_{1}(\BS{g})$ belongs to $\mathcal{D}(A_{s})$ and
\begin{multline}\label{chap2-touse}
-\nu\Pi\Delta\ug=-\nu\Pi\Delta(\ug-L_{1}(\BS{g}))+\nu\Pi\Delta L_{1}(\BS{g})=-A_{s}\Pi(\ug-L_{1}(\BS{g}))=-A_{s}\Pi\ug + A_{s}\Pi L_{1}(\BS{g}).
\end{multline}
In the previous identities we have used the extrapolation method to extend $A_{s}$ as an unbounded operator in $\mathcal{D}(A_{s}^{*})'$ with domain $\BS{V}^{0}_{n,\Gamma_{d}}(\Omega)$. Applying $\Pi$ on the first line of \eqref{chap2-stokes-nh} we obtain
\[\lambda\Pi\ug -\nu\Pi\Delta\ug=\Pi\BS{f},\]
which, using \eqref{chap2-touse}, provides the first line in \eqref{chap2-equiv.stokes}. The second line follows directly from the elliptic equations \eqref{chap2-equation-Pif} used to compute $(I-\Pi)\ug$. Finally the pressure is obtained by applying $(I-\Pi)$ to the first line of \eqref{chap2-stokes-nh}.
\end{proof}
\subsection{Beam equation}
Let $(A_{\alpha,\beta},\mathcal{D}(A_{\alpha,\beta}))$ be the unbounded operator in $L^{2}(\Gamma_s)$ defined by $\mathcal{D}(A_{\alpha,\beta})=H^{4}(\Gamma_s)\cap H^{2}_{0}(\Gamma_s)$ and, for all $\eta\in\mathcal{D}(A_{\alpha,\beta})$, $A_{\alpha,\beta}\eta=\beta\eta_{xx}-\alpha\eta_{xxxx}$. The operator $A_{\alpha,\beta}$ is self-adjoint and is an isomorphism from $\mathcal{D}(A_{\alpha,\beta})$ to $L^{2}(\Gamma_s)$.

The space $H^{2}_{0}(\Gamma_{s})$ is equipped with the inner product
\[\langle \eta_{1},k_{1}\rangle_{H^{2}_{0}(\Gamma_{s})}=\int_{\Gamma_{s}}(-A_{\alpha,\beta})^{1/2}\eta_{1}(-A_{\alpha,\beta})^{1/2}k_{1}.\]
The unbounded operator $(A_{b},\mathcal{D}(\mathcal{A}_{b}))$ associated with the beam, in
$H_{b}=H^{2}_{0}(\Gamma_s)\times L^{2}(\Gamma_s)$,
is defined by 
\[
\mathcal{D}(\mathcal{A}_{b})=(H^{4}(\Gamma_s)\cap H^{2}_{0}(\Gamma_s))\times H^{2}_{0}(\Gamma_s)
\mbox{ and }\mathcal{A}_{b}=
\begin{pmatrix}
0&I\\
A_{\alpha,\beta}&\gamma\Delta_s
\end{pmatrix}.
\]
\begin{theorem}The operator $(\mathcal{A}_{b},\mathcal{D}(\mathcal{A}_{b}))$ is the infinitesimal generator of an analytic semigroup on $H_{b}$.
\end{theorem}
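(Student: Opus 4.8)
The plan is to apply the standard theory of analytic semigroups generated by operator matrices of the form $\begin{pmatrix}0&I\\ -A&-\gamma A^{1/2}\cdot A^{1/2}\end{pmatrix}$ associated with damped second-order evolution equations. Write $A:=-A_{\alpha,\beta}$, which by the stated properties is a positive self-adjoint operator on $L^2(\Gamma_s)$ with $\mathcal{D}(A)=H^4(\Gamma_s)\cap H^2_0(\Gamma_s)$, $\mathcal{D}(A^{1/2})=H^2_0(\Gamma_s)$, and the graph norm of $A^{1/2}$ is exactly the inner product chosen on $H^2_0(\Gamma_s)$. Since $\gamma>0$, the damping term $\gamma\Delta_s=-\gamma A_{\text{Dir}}$ is comparable to $-\gamma A^{1/2}$ in the sense that both have form domain $H^1_0$... but in fact the cleanest route is to observe that $\gamma\Delta_s$, viewed as the damping operator, satisfies $\mathcal{D}((-\Delta_s)^{1/2})=H^1_0(\Gamma_s)\supset H^2_0(\Gamma_s)=\mathcal{D}(A^{1/2})$, so the damping dominates $A^{1/2}$; this is precisely the setting in which the semigroup is not merely a $C_0$-semigroup but analytic.

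First I would recall (or cite) the abstract result: if $A$ is self-adjoint positive with compact resolvent on a Hilbert space $H$, and $B$ is a self-adjoint nonnegative operator with $\mathcal{D}(A^{1/2})\subset\mathcal{D}(B^{1/2})$ (equivalently $B^{1/2}A^{-1/2}$ bounded) and moreover $B$ is "strong enough", e.g. $\langle Bx,x\rangle\geq c\langle A^{1/2}x,x\rangle$ on $\mathcal{D}(A^{1/2})$, then the operator $\mathcal{A}_b=\begin{pmatrix}0&I\\ -A&-B\end{pmatrix}$ on $\mathcal{D}(A^{1/2})\times H$ with the indicated domain generates an analytic semigroup. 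In our case $B=-\gamma\Delta_s$ with Dirichlet conditions; $\langle Bx,x\rangle = \gamma\|x_x\|_{L^2}^2$ for $x\in H^2_0(\Gamma_s)$, and since on the bounded interval $\Gamma_s$ we have $\|x_x\|_{L^2}^2\geq c\|x\|_{H^2_0}^2 $ fails in general — rather one needs $\|x_x\|_{L^2}^2\gtrsim \langle A^{1/2}x,x\rangle=\|A^{1/4}x\|^2$... Actually the correct and easiest statement: it suffices that $B\geq c_0 I$ on $\mathcal{D}(A^{1/2})$ together with $B$ bounded relative to $A$ from above, i.e. $\mathcal{D}(A)\subset\mathcal{D}(B)$; here $\mathcal{D}(B)=H^2(\Gamma_s)\cap H^1_0(\Gamma_s)\supset H^4\cap H^2_0=\mathcal{D}(A)$, and Poincaré gives $\langle Bx,x\rangle=\gamma\|x_x\|^2\geq \gamma c_P\|x\|^2$. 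These are exactly the hypotheses of the Chen–Russell / Chen–Triggiani type theorem on analyticity of damped elastic systems.

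Concretely the steps I would carry out are: (i) reduce to self-adjoint normal form by introducing $A=-A_{\alpha,\beta}$ and checking the domain identifications $\mathcal{D}(A^{1/2})=H^2_0(\Gamma_s)$ and that the inner product on $H^2_0(\Gamma_s)$ is the $A^{1/2}$-graph inner product, so that $H_b=\mathcal{D}(A^{1/2})\times H$ is the natural energy space; (ii) verify dissipativity: for $(\eta,\xi)\in\mathcal{D}(\mathcal{A}_b)$ compute $\mathrm{Re}\,\langle\mathcal{A}_b(\eta,\xi),(\eta,\xi)\rangle_{H_b}=\mathrm{Re}\big(\langle A^{1/2}\xi,A^{1/2}\eta\rangle+\langle -A\eta-B\xi,\xi\rangle\big)=-\langle B\xi,\xi\rangle\leq 0$, so $\mathcal{A}_b$ is dissipative, and a symmetric computation shows $\mathcal{A}_b^*$ is dissipative, whence $\mathcal{A}_b$ generates a $C_0$-semigroup of contractions by Lumer–Phillips; (iii) upgrade to analyticity by the resolvent estimate $\|(i\tau-\mathcal{A}_b)^{-1}\|\leq C/|\tau|$ for large real $\tau$ — one writes the resolvent equation $(i\tau-\mathcal{A}_b)(\eta,\xi)=(f,g)$, eliminates $\xi=i\tau\eta-f$, solves $(-\tau^2+i\tau B+A)\eta=g+i\tau f+Bf$, and estimates using the spectral theorem for the commuting pair $A,B$ on the interval (or the sesquilinear form $(-\tau^2)\|\eta\|^2+i\tau\langle B\eta,\eta\rangle+\langle A^{1/2}\eta,A^{1/2}\eta\rangle$ and its real/imaginary parts); the key inequality $\mathrm{Re}+\mathrm{Im}$ combined with $\langle B\eta,\eta\rangle\geq c\|\eta\|^2$ yields the needed bound; alternatively cite the Chen–Russell theorem directly since $c_1 A^{1/2}\leq B$ need only be replaced by $B\geq c I$ plus $B\lesssim A$ in the appropriate sense. (iv) The compactness of the resolvent — needed later for the periodic problem though not for this statement — follows from $\mathcal{D}(\mathcal{A}_b)\hookrightarrow H^4\times H^2 \hookrightarrow\hookrightarrow H^2\times L^2=H_b$.

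\medskip

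\textbf{Main obstacle.} The delicate point is genuinely step (iii): establishing analyticity rather than just $C_0$. The damping here is $\gamma\Delta_s$, which is of order $2$, strictly weaker than the principal operator $\alpha\partial_{xxxx}$ of order $4$, so this is the "borderline" damping ($\theta$-type damping with $\theta=1/2$ in the scale where $A$ has order $1$), which is exactly the critical case in which analyticity still holds (Chen–Triggiani) but is not obvious — too little damping ($\theta<1/2$, i.e. damping of order $<2$) would give only a Gevrey or eventually-differentiable semigroup, not an analytic one. So the proof must exploit the precise $H^2$ versus $H^4$ relationship. Because $A_{\alpha,\beta}$ and $\Delta_s$ (both with homogeneous Dirichlet/clamped-type conditions on $\Gamma_s$) do \emph{not} commute in general — the boundary conditions differ, $\mathcal{D}(A_{\alpha,\beta})$ requires $\eta=\eta_x=0$ while $\mathcal{D}(\Delta_s)$ only $\eta=0$ — one cannot literally diagonalize both simultaneously, so the cleanest rigorous argument is the sesquilinear-form resolvent estimate of Chen–Russell type, or a direct appeal to the structural theorem in, e.g., the literature on damped beam/plate equations; I would cite that and verify its hypotheses ($A>0$ self-adjoint, $B\geq 0$ self-adjoint with $\mathcal{D}(A^{1/2})\subset\mathcal{D}(B)\subset\mathcal{D}(A^{1/2})$... in fact $\mathcal{D}(B)=H^2\cap H^1_0 \supsetneq H^2_0=\mathcal{D}(A^{1/2})$, and $\langle Bx,x\rangle\simeq\|x\|^2_{H^1_0}$ while $\langle A^{1/2}x,x\rangle\simeq\|x\|^2_{H^2_0}$, giving $c_1\langle A^{1/2}x,x\rangle\leq \langle Bx,x\rangle$ is \emph{false}; the correct chain is $\langle Bx,x\rangle\geq c\|x\|^2$ and $\langle Bx,x\rangle\leq C\langle A^{1/2}x,x\rangle$, i.e. $B$ is dominated by $A^{1/2}$ from above and bounded below — this pair of inequalities is precisely what the analyticity theorem for "$\theta=1/2$" damping requires).
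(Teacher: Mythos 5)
Your overall route is the paper's route: the paper's entire proof is a citation of Chen--Triggiani (reference \cite{MR971932}, Theorem 1.1), i.e.\ the structural-damping theorem for $\bigl(\begin{smallmatrix}0&I\\-A&-B\end{smallmatrix}\bigr)$ with $B$ comparable to $A^{1/2}$, which is exactly the theorem you identify, with the correct critical exponent $\theta=1/2$. The problem is that your verification of its hypothesis goes wrong and ends in a self-contradiction. You assert $\langle A^{1/2}x,x\rangle\simeq\|x\|^2_{H^2_0}$ and conclude that the two-sided comparison $\rho_1\langle A^{1/2}x,x\rangle\leq\langle Bx,x\rangle\leq\rho_2\langle A^{1/2}x,x\rangle$ fails. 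That identification confuses the quadratic form of $A^{1/2}$ with the squared graph norm of $A^{1/2}$: one has $\|A^{1/2}x\|^2=\langle Ax,x\rangle\simeq\|x\|^2_{H^2_0}$, whereas $\langle A^{1/2}x,x\rangle=\|A^{1/4}x\|^2$ and $\mathcal{D}(A^{1/4})=[\mathcal{D}(A^{1/2}),L^2]_{1/2}=[H^2_0(\Gamma_s),L^2(\Gamma_s)]_{1/2}=H^1_0(\Gamma_s)$, so $\langle A^{1/2}x,x\rangle\simeq\|x_x\|^2_{L^2}=\gamma^{-1}\langle Bx,x\rangle$. The Chen--Triggiani hypothesis therefore \emph{holds} (two-sided, as an equivalence of forms on $\mathcal{D}(A^{1/2})$), and no commutativity of $A_{\alpha,\beta}$ and $\Delta_s$ is needed; had you computed this correctly, your step (iii) would be unnecessary and the proof would reduce to the citation, as in the paper.

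The substitute hypothesis you propose instead --- $B\geq c_0 I$ together with $\langle Bx,x\rangle\leq C\langle A^{1/2}x,x\rangle$ --- is genuinely insufficient for analyticity, so the argument as written does not close. Indeed $B=c_0 I$ satisfies both conditions, but the semigroup of $y''+c_0y'+Ay=0$ is not analytic: its eigenvalues are $\lambda_n^{\pm}=\bigl(-c_0\pm\sqrt{c_0^2-4\mu_n}\bigr)/2$ with $\mu_n\to+\infty$, which accumulate along the vertical line $\operatorname{Re}\lambda=-c_0/2$ and hence lie in no sector. The lower bound on the damping must be taken against $A^{1/2}$ (form sense), not against the identity; this is precisely the bound you talked yourself out of. Everything else in your outline (energy space identification, Lumer--Phillips dissipativity, compactness of the resolvent) is fine.
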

\begin{proof}
See \cite[Theorem 1.1]{MR971932}.
\end{proof}
\subsection{Semigroup formulation of the linear fluid--structure system}\label{section-semigroup-formulation}
Consider a period $T>0$. Set $\theta\in(0,1)$ and 
\[(\boldsymbol{\omega_{1}},\omega_{2})\in \left(\mathcal{C}^{\theta}_{\sharp}([0,T];\BS{H}^{3/2}_{0}(\Gamma_{i}))\cap \mathcal{C}^{1+\theta}_{\sharp}([0,T];\BS{H}^{-1/2}(\Gamma_{i}))\right)\times\mathcal{C}^{\theta}_{\sharp}([0,T];H^{1/2}(\Gamma_{o})).\]
For $(\BS{f},\Theta,h)$ in $\mathcal{C}^{\theta}_{\sharp}(0,T;\BS{L}^{2}(\Omega))\times \mathcal{C}^{\theta}_{\sharp}([0,T];H^{1/2}(\Gamma_{o}))\times \mathcal{C}^{\theta}_{\sharp}([0,T];L^{2}(\Gamma_{s}))$ and \[\BS{w}\in\mathcal{C}^{1+\theta}_{\sharp}([0,T];\BS{L}^{2}(\Omega))\cap \mathcal{C}^{\theta}_{\sharp}([0,T];\BS{H}^{2}(\Omega)\cap \BS{H}^{1}_{0}(\Omega)),\]
consider the following linear system
\begin{equation}\label{chap2-linear-existence1}
\begin{aligned}
&\ug_{t}-\nu\Delta\ug+\nabla p=\BS{f},\,\,\,\DIV\ug=\DIV\BS{w}\,\text{ in }Q_{T},\\
&\ug=\eta_{t}\textbf{e}_{2}\,\text{ on }\Sigma^{s}_{T},\,\,\,
\ug=\boldsymbol{\omega}_{1}\,\text{ on }\Sigma^{i}_{T},\\
&u_2=0\text{ and }p=\omega_{2}+\Theta\,\text{ on }\Sigma^{o}_{T},\\
&\ug=0\,\text{ on }\Sigma^{b}_{T},\,\,\,\ug(0)=\ug(T)\,\text{ in }\Omega,\\
&\eta_{tt}-\beta\eta_{xx}-\gamma\eta_{txx}+\alpha\eta_{xxxx}=p-2\nu u_{2,z}+h\,\text{ in }\Sigma^{s}_{T},\\
&\eta=0\,\text{ and }\,\eta_{x}=0\,\text{ on }\{0,L\}\times(0,T),\\
&\eta(0)=\eta(T)\,\text{ and }\,\eta_{t}(0)=\eta_{t}(T)\,\text{ in }\Gamma_{s}.
\end{aligned}
\end{equation}
For a scalar function $\eta$ defined on $\Gamma_{s}$ we use the notation $L_{1}(\eta)=L_{1}(\eta\textbf{e}_{2})$. We look for a solution to (\ref{chap2-linear-existence1}) under the form $(\ug,p,\eta)=(\vg,q,\eta) + (\BS{w}+L_{\Gamma_{i},1}(\boldsymbol{\omega}_{1}),L_{\Gamma_{o}}(\omega_{2})+L_{\Gamma_{o}}(\Theta)+L_{\Gamma_{i},2}(\boldsymbol{\omega}_{1}),0)$ with $(\vg,q,\eta)$ solution to
\begin{equation}\label{chap2-linear-existence2}
\begin{aligned}
&\BS{v}_{t}-\nu\Delta\vg + \nabla q = F,\,\,\,\DIV\BS{v}=0\,\text{ in }Q_{T},\\
&\BS{v}=\eta_{t}\textbf{e}_{2}\,\text{ on }\Sigma^{s}_{T},\,\,\,
\vg=0\,\text{ on }\Sigma^{i}_{T},\\
&v_2=0\text{ and }q=0\,\text{ on }\Sigma^{o}_{T},\\
&\vg=0\,\text{ on }\Sigma^{b}_{T},\,\,\,\BS{v}(0)=\vg(T)\,\text{ in }\Omega,\\
&\eta_{tt}-\beta\eta_{xx}-\gamma\eta_{txx}+\alpha\eta_{xxxx}=q+H\,\text{ in }\Sigma^{s}_{T},\\
&\eta=0\,\text{ and }\,\eta_{x}=0\,\text{ on }\{0,L\}\times(0,T),\\
&\eta(0)=\eta(T)\,\text{ and }\,\eta_{t}(0)=\eta_{t}(T)\,\text{ in }\Gamma_{s},
\end{aligned}
\end{equation}
where $F=\BS{f}-\BS{w}_{t}+\nu\Delta \BS{w}-\partial_{t}L_{\Gamma_{i},1}(\boldsymbol{\omega}_{1})-\nabla L_{\Gamma_{o}}(\omega_{2})-\nabla L_{\Gamma_{o}}(\Theta)$ and $H=w_{2,z}+L_{\Gamma_{i},2}(\boldsymbol{\omega}_{1})+L_{\Gamma_{o}}(\omega_{2})+L_{\Gamma_{o}}(\Theta)+h$. 
\begin{theorem}\label{chap2-lin.recast}
Suppose that $\eta_{t}\in \mathcal{C}^{1+\theta}_{\sharp}([0,T];L^{2}(\Gamma_{s}))\cap \mathcal{C}^{\theta}_{\sharp}([0,T];H^{2}_{0}(\Gamma_{s}))$. A pair
\begin{equation}\label{chap2-reg.linper}
(\vg,q)\in \left(\mathcal{C}^{1+\theta}_{\sharp}([0,T];\BS{L}^{2}(\Omega))\cap\mathcal{C}^{\theta}_{\sharp}([0,T];\BS{H}^{2}(\Omega))\right)\times \mathcal{C}^{\theta}_{\sharp}([0,T];H^{1}(\Omega))
\end{equation}
obeys the fluid equations of (\ref{chap2-linear-existence2}) if and only if
\begin{equation}
\begin{aligned}
&\Pi\vg_{t} = A_{s}\Pi\vg - A_{s}\Pi L_{1}(\eta_{t}),\,\,\,\vg(0)=\vg(T),\\
&(I-\Pi)\vg=\nabla N_{s}(\eta_{t}),\\
&q=-N_{s}(\eta_{t})_{t}+ N_{v}(\Pi\vg)+N_{p}(F).\\
\end{aligned}
\end{equation}
\end{theorem}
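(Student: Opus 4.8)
The plan is to transcribe the proof of Theorem~\ref{chap2-reformulation.stokes} to the time-dependent, periodic situation: the zeroth order term $\lambda\ug$ there is replaced by $\vg_t$, the extra contributions this produces are pushed into the pressure, and the constraint $\vg(0)=\vg(T)$ is carried through the Leray splitting. Throughout I would use that $\Pi$, $I-\Pi$, $\nabla$, $N_s$, $N_v$, $N_p$ are bounded and $t$-independent, hence commute with $\partial_t$, and that by Lemma~\ref{chap2-reg-Ns-Ndiv} $N_s$ maps $(\mathcal{H}^{1/2}(\Gamma_s))'$ boundedly into $H^1(\Omega)$, so that $t\mapsto N_s(\eta_t(t))$ has the time regularity of $t\mapsto\eta_t(t)$ measured in $(\mathcal{H}^{1/2}(\Gamma_s))'$.

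For the direct implication, suppose $(\vg,q)$ has the regularity \eqref{chap2-reg.linper} and satisfies the fluid equations of \eqref{chap2-linear-existence2}. At fixed $t$, $\DIV\vg=0$ makes the $H^1_0$ component in the decomposition \eqref{chap2-equation-Pif} vanish, while the $H^1_{\Gamma_o}$ component is harmonic with normal trace $\vg\cdot\normal=\eta_t$ on $\Gamma_s$ and $0$ on $\Gamma_i\cup\Gamma_b$; by definition of $N_s$ this equals $N_s(\eta_t)$, so $(I-\Pi)\vg=\nabla N_s(\eta_t)$, and differentiating in $t$ gives $(I-\Pi)\vg_t=\nabla N_s(\eta_t)_t$. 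Next, $\vg-L_1(\eta_t)$ is divergence-free, lies in $\BS{H}^2(\Omega)$, vanishes on $\Gamma_d$ (its trace on $\Gamma_s$ is $\eta_t\textbf{e}_2-\eta_t\textbf{e}_2=0$, and it is zero on $\Gamma_i\cup\Gamma_b$), and has vanishing second component on $\Gamma_o$, hence lies in $\mathcal{D}(A_s)=\BS{H}^2(\Omega)\cap V$; the reasoning of \eqref{chap2-touse}, applied to $\vg$ with $\eta_t\textbf{e}_2$ in the role of $\BS{g}$, then gives $-\nu\Pi\Delta\vg=-A_s\Pi\vg+A_s\Pi L_1(\eta_t)$. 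Applying $\Pi$ to the momentum equation, with $\Pi\nabla q=0$ since $q=0$ on $\Gamma_o$, yields $\Pi\vg_t=A_s\Pi\vg-A_s\Pi L_1(\eta_t)+\Pi F$, and $\Pi\vg(0)=\Pi\vg(T)$ follows from $\vg(0)=\vg(T)$. For the pressure I would apply $I-\Pi$ to the momentum equation and combine: $(I-\Pi)\vg_t=\nabla N_s(\eta_t)_t$; $\nu\Delta\vg=\nu\Delta\Pi\vg$, because $(I-\Pi)\vg=\nabla N_s(\eta_t)$ with $N_s(\eta_t)$ harmonic; $(I-\Pi)(\nu\Delta\Pi\vg)=\nabla N_v(\Pi\vg)$ by definition of $N_v$; and $(I-\Pi)F=\nabla N_p(F)$ by definition of $N_p$. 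This gives $\nabla q=\nabla\bigl(-N_s(\eta_t)_t+N_v(\Pi\vg)+N_p(F)\bigr)$, and since $q$, $N_s(\eta_t)_t$, $N_v(\Pi\vg)$ and $N_p(F)$ all vanish on $\Gamma_o$, the two $H^1_{\Gamma_o}(\Omega)$ functions with equal gradient must coincide.

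For the converse, suppose $(\vg,q)$ has the regularity \eqref{chap2-reg.linper} and satisfies the recast system. Since $(I-\Pi)L_1(\eta_t)=\nabla N_s(\eta_t)$ (a property of $L_1$ alone, obtained as above), the second line gives $(I-\Pi)(\vg-L_1(\eta_t))=0$, so $\vg-L_1(\eta_t)=\Pi\vg-\Pi L_1(\eta_t)$; as $\vg_t$ and $F$ take values in $\BS{L}^2(\Omega)$, the evolution equation exhibits $A_s(\Pi\vg-\Pi L_1(\eta_t))$ as a function with values in $\BS{V}^0_{n,\Gamma_d}(\Omega)$, and the extrapolation framework then forces $\vg-L_1(\eta_t)\in\mathcal{D}(A_s)$. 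Hence $\vg$ inherits the Dirichlet traces of $L_1(\eta_t)$, namely $\vg=\eta_t\textbf{e}_2$ on $\Gamma_s$, $\vg=0$ on $\Gamma_i\cup\Gamma_b$, $v_2=0$ on $\Gamma_o$, while $\DIV\vg=\Delta N_s(\eta_t)=0$. Defining $q$ by the third line, $q=0$ on $\Gamma_o$; reversing the computations above gives $\Pi(\vg_t-\nu\Delta\vg+\nabla q)=\Pi F$ and $(I-\Pi)(\vg_t-\nu\Delta\vg+\nabla q)=\nabla N_p(F)=(I-\Pi)F$, hence the momentum equation; and periodicity of $\vg$ follows from that of $\Pi\vg$ together with $(I-\Pi)\vg=\nabla N_s(\eta_t)$ and periodicity of $\eta_t$.

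The Leray-splitting algebra is identical to that of Theorem~\ref{chap2-reformulation.stokes} and should be routine; the step I expect to need the most care is the handling of the time-derivative terms. One must justify that $(I-\Pi)\vg=\nabla N_s(\eta_t)$ may be differentiated in $t$ with $\partial_t$ commuting past $\Pi$, $I-\Pi$, $\nabla$ and $N_s$, and check that $N_s(\eta_t)_t$ is well defined with values in $H^1(\Omega)$, so that $q=-N_s(\eta_t)_t+N_v(\Pi\vg)+N_p(F)$ indeed lies in $\mathcal{C}^{\theta}_{\sharp}([0,T];H^1(\Omega))$. This is exactly where the extended mapping property of $N_s$ from Lemma~\ref{chap2-reg-Ns-Ndiv} and the hypothesis $\eta_t\in\mathcal{C}^{1+\theta}_{\sharp}([0,T];L^2(\Gamma_s))\cap\mathcal{C}^{\theta}_{\sharp}([0,T];H^2_0(\Gamma_s))$ come in.
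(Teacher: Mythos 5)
Your proof is correct and follows essentially the same route as the paper: the paper simply moves $\vg_{t}$ to the right-hand side and invokes Theorem~\ref{chap2-reformulation.stokes} pointwise in time, which is exactly the computation you unfold, including the identification $N_{p}(F-\vg_{t})=N_{p}(F)-N_{s}(\eta_{t})_{t}$ obtained by differentiating $(I-\Pi)\vg=\nabla N_{s}(\eta_{t})$ in time. Note that the term $+\Pi F$ you obtain in the first equation is indeed present in the paper's subsequent system \eqref{chap2-evolution-existence1}, so the theorem statement as printed has simply dropped it.
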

\begin{proof}
A pair $(\vg,q)$ as in \eqref{chap2-reg.linper} is solution to the fluid equations in \eqref{chap2-linear-existence2} if and only if
\[\begin{aligned}
&-\nu\Delta\vg+\nabla q=F-\vg_{t},\,\,\,\DIV\vg=0\text{ in }Q_{T},\\
&\vg=\eta_{t}\textbf{e}_{2}\,\text{ on }\Sigma^{s}_{T},\,\,\, \vg=0\,\text{ on }\Sigma^{i}_{T},\\
&v_{2}=0\,\text{ and }q=0\text{ on }\Sigma^{o}_{T},\,\,\,\vg=0\,\text{ on }\Sigma^{b}_{T}.
\end{aligned}
\]
We then apply Theorem \ref{chap2-reformulation.stokes} to conclude.
\end{proof}
Introduce the space
\[\BS{H}=\BS{V}^{0}_{n,\Gamma_{d}}(\Omega)\times H^{2}_{0}(\Gamma_{s})\times L^{2}(\Gamma_{s}),\]
equipped with the inner product
\[\langle (\ug,\eta_1,\eta_2),(\vg,\zeta_1,\zeta_2)\rangle_{\BS{H}}=\langle \ug,\vg\rangle_{\BS{L}^{2}(\Omega)}+\langle \eta_1,\zeta_1\rangle_{H^{2}_{0}(\Gamma_s)}+\langle \eta_2,\zeta_2 \rangle_{L^{2}(\Gamma_s)}.\]
Owing to Theorem \ref{chap2-lin.recast}, System (\ref{chap2-linear-existence2}) can be recast in terms of $(\Pi\vg,\eta,\eta_t)$:
\begin{equation}\label{chap2-evolution-existence1}
\begin{cases}
\begin{aligned}
&\frac{d}{dt}\begin{pmatrix}
\Pi\vg\\
\eta\\
\eta_t\\
\end{pmatrix}
=\mathcal{A}
\begin{pmatrix}
\Pi\vg\\
\eta\\
\eta_t\\
\end{pmatrix}
+\BS{f},\,\,\,\begin{pmatrix}
\Pi\vg(0)\\
\eta(0)\\
\eta_{t}(0)\\
\end{pmatrix}=\begin{pmatrix}
\Pi\vg(T)\\
\eta(T)\\
\eta_{t}(T)\\
\end{pmatrix},\\
&(I-\Pi)\vg=\nabla N_{s}(\eta_{t}),\\
&q=-N_{s}(\eta_{t})_{t}+ N_{v}(\Pi\vg)+N_{p}(F),\\
\end{aligned}
\end{cases}
\end{equation}
where
\[\BS{f}=
\begin{pmatrix}
\Pi F\\
0\\
(I+N_s)^{-1}(N_{p}(F)+H)\\
\end{pmatrix},
\]
and $\mathcal{A}$ is the unbounded operator in $\BS{H}$ defined by
\[
\begin{aligned}
\mathcal{D}(\mathcal{A})=\{(\Pi\vg,\eta_1,\eta_2)\in \BS{V}^{2}_{n,\Gamma_{d}}(\Omega)\times{}& (H^{4}(\Gamma_s)\cap H^{2}_{0}(\Gamma_s))\times H^{2}_{0}(\Gamma_s)\\
&\big| \Pi\vg -\Pi L_{1}(\eta_2)\in \mathcal{D}(A_{s})\},
\end{aligned}
\]
and
\begin{equation}\label{chap2-s5matriceA}
\mathcal{A}=
\begin{pmatrix}
I&0&0\\
0&I&0\\
0&0&(I+N_s)^{-1}\\
\end{pmatrix}
\begin{pmatrix}
A_{s}&0&-A_{s}\Pi L_{1}\\
0&0&I\\
N_{v}&A_{\alpha,\beta}&\delta\Delta_s\\
\end{pmatrix},
\end{equation}
with $\Delta_s=\partial_{xx}$.

\subsection{Analyticity of $\mathcal{A}$}
The unbounded operator $\mathcal{A}$ has already been studied, with small variations related to the boundary conditions, in \cite{MR2745779,2017arXiv170706382C}.
\begin{theorem}\label{chap2-s5thm1}The operator $(\mathcal{A},\mathcal{D}(\mathcal{A}))$ is the infinitesimal generator of an analytic semigroup on $\BS{H}$. Moreover, the resolvent of $(\mathcal{A},\mathcal{D}(\mathcal{A}))$ is compact.
\end{theorem}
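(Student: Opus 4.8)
The plan is to adapt the arguments of \cite{MR2745779,2017arXiv170706382C}, where an operator with the same block structure is shown to generate an analytic semigroup, and to indicate the modifications due to the mixed boundary conditions (Dirichlet on $\Gamma_{i}$, pressure on $\Gamma_{o}$). That $\mathcal{A}$ is densely defined is immediate, since $\mathcal{D}(\mathcal{A})$ contains a dense subspace of $\BS{H}$; closedness follows from the $\BS{H}^{2}\times H^{1}$-regularity of the Stokes system given by Theorem~\ref{chap2-regularity.stokes}, the closedness of $A_{\alpha,\beta}$, and the continuity of $N_{s}$, $N_{v}$ and $A_{s}\Pi L_{1}$ between the relevant spaces (note that $I+N_{s}$, a nonnegative self-adjoint ``added-mass'' perturbation of the identity on $L^{2}(\Gamma_{s})$, is boundedly invertible).

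The core of the proof is the sectorial resolvent estimate. Fix $\delta\in(0,\pi/2)$ and consider $\lambda$ in the sector $\Sigma_{\delta}=\{\lambda\in\mathbb{C}\setminus\{0\}\mid |\arg\lambda|<\pi/2+\delta\}$ with $|\lambda|$ large. For data in $\BS{H}$, solving $\lambda w-\mathcal{A}w=(\text{data})$ with $w=(\Pi\vg,\eta_{1},\eta_{2})\in\mathcal{D}(\mathcal{A})$ amounts, after eliminating $\eta_{1}$ and undoing the Leray projection, to the stationary version of the fluid--structure system \eqref{chap2-linear-existence2}: a Stokes resolvent problem with nonhomogeneous Dirichlet datum $\eta_{2}\textbf{e}_{2}$ on $\Gamma_{s}$, coupled to the beam equation through the normal trace of $\sigma(\vg,q)\normal$ on $\Gamma_{s}$. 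I would test the fluid equation with $\vg$ and the beam equation with $\eta_{2}$, then integrate by parts: the boundary term $\int_{\Gamma_{s}}\sigma(\vg,q)\normal\cdot\vg$ produced by the fluid equation equals, thanks to the kinematic condition $\vg=\eta_{2}\textbf{e}_{2}$ on $\Gamma_{s}$, the fluid-force term in the beam equation, so the two cancel in the sum. What remains is an identity of the form $\lambda\big(\norme{\vg}_{\BS{L}^{2}(\Omega)}^{2}+\norme{\eta_{1}}_{H^{2}_{0}(\Gamma_{s})}^{2}+\norme{\eta_{2}}_{L^{2}(\Gamma_{s})}^{2}\big)+\nu\norme{\nabla\vg}_{\BS{L}^{2}(\Omega)}^{2}+\gamma\norme{\partial_{x}\eta_{2}}_{L^{2}(\Gamma_{s})}^{2}=\langle\text{data},w\rangle_{\BS{H}}+(\text{lower-order})$. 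Splitting into real and imaginary parts and using $|\arg\lambda|<\pi/2+\delta$ yields $|\lambda|\,\norme{w}_{\BS{H}}^{2}\leq C\big(\norme{\text{data}}_{\BS{H}}\norme{w}_{\BS{H}}+\norme{w}_{\BS{H}}^{2}\big)$, hence $\norme{(\lambda I-\mathcal{A})^{-1}}_{\mathcal{L}(\BS{H})}\leq C/|\lambda|$ for $|\lambda|$ large in $\Sigma_{\delta}$. Surjectivity of $\lambda I-\mathcal{A}$ for one such $\lambda$ follows from Theorem~\ref{chap2-regularity.stokes} — which supplies the $\BS{H}^{2}$-solvability of the Stokes part including the corners between $\Gamma_{i},\Gamma_{s}$ and between $\Gamma_{o},\Gamma_{s}$ — combined with a Lax--Milgram/Fredholm analysis of the coupled variational problem. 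By the standard characterization of analytic semigroup generators, $\mathcal{A}$ generates an analytic semigroup on $\BS{H}$.

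Compactness of the resolvent is then soft: $\mathcal{D}(\mathcal{A})$ embeds continuously into $\BS{V}^{2}_{n,\Gamma_{d}}(\Omega)\times(H^{4}(\Gamma_{s})\cap H^{2}_{0}(\Gamma_{s}))\times H^{2}_{0}(\Gamma_{s})$, and each of the three inclusions $\BS{V}^{2}_{n,\Gamma_{d}}(\Omega)\hookrightarrow\BS{V}^{0}_{n,\Gamma_{d}}(\Omega)$, $H^{4}(\Gamma_{s})\cap H^{2}_{0}(\Gamma_{s})\hookrightarrow H^{2}_{0}(\Gamma_{s})$ and $H^{2}_{0}(\Gamma_{s})\hookrightarrow L^{2}(\Gamma_{s})$ is compact by Rellich's theorem; hence $\mathcal{D}(\mathcal{A})\hookrightarrow\BS{H}$ is compact, and since the resolvent set of $\mathcal{A}$ is nonempty, $(\lambda I-\mathcal{A})^{-1}$ is compact for every $\lambda\in\rho(\mathcal{A})$.

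The main difficulty is the term $A_{s}\Pi L_{1}(\eta_{2})$, which is not a lower-order perturbation ($A_{s}$ is second order while $L_{1}$ gains only two derivatives from an $H^{3/2}_{00}(\Gamma_{s})$ trace). As in \cite{MR2745779,2017arXiv170706382C} this term is never isolated: one works throughout with the ``good'' unknown $\Pi\vg-\Pi L_{1}(\eta_{2})\in\mathcal{D}(A_{s})$, on which $A_{s}$ acts boundedly into $\BS{V}^{0}_{n,\Gamma_{d}}(\Omega)$, and the cancellation between the fluid boundary stress and the coupling term in the beam equation used above is precisely the structural feature that makes the coupled operator dissipative. Keeping track of the mixed boundary conditions enters only through the corner regularity of Theorem~\ref{chap2-regularity.stokes} and otherwise leaves the argument unchanged.
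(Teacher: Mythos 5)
Your closedness/density remarks and your compactness argument are fine and coincide with the paper's (the paper also concludes compactness of the resolvent from the Rellich embedding $\mathcal{D}(\mathcal{A})\overset{c}{\hookrightarrow}\BS{H}$). The gap is in the core step: the sectorial estimate does not follow from the energy identity you describe. When you test the fluid equation with $\overline{\vg}$ and the beam equation with $\overline{\eta}_{2}$ and eliminate $\eta_{1}$ via $\lambda\eta_{1}=\eta_{2}+(\text{data})$, the elastic terms come out multiplied by $\overline{\lambda}$, not $\lambda$: the identity is of the form
\[
\lambda\Bigl[\norme{\vg}_{\BS{L}^{2}(\Omega)}^{2}+\norme{\eta_{2}}_{L^{2}(\Gamma_{s})}^{2}\Bigr]
+\overline{\lambda}\Bigl[\beta\norme{\eta_{1,x}}_{L^{2}(\Gamma_{s})}^{2}+\alpha\norme{\eta_{1,xx}}_{L^{2}(\Gamma_{s})}^{2}\Bigr]
+\nu\norme{\nabla\vg}_{\BS{L}^{2}(\Omega)}^{2}+\gamma\norme{\eta_{2,x}}_{L^{2}(\Gamma_{s})}^{2}
=\langle \text{data},w\rangle+\dots
\]
(this is exactly the computation the paper performs in Section 2.5 to locate the eigenvalues). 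Taking real parts gives dissipativity, but taking imaginary parts gives $\operatorname{Im}\lambda\,(a-b)$ with $a=\norme{\vg}^{2}+\norme{\eta_{2}}^{2}$ and $b=\beta\norme{\eta_{1,x}}^{2}+\alpha\norme{\eta_{1,xx}}^{2}$, and $a-b$ does not control $\norme{w}_{\BS{H}}^{2}=a+b$. So you cannot conclude $|\lambda|\norme{w}_{\BS{H}}^{2}\leq C\norme{\text{data}}_{\BS{H}}\norme{w}_{\BS{H}}$; you only get a resolvent bound of order $1/\operatorname{Re}\lambda$, i.e.\ generation of a quasi-contraction $C_{0}$-semigroup. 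That this cannot suffice is clear from the degenerate case $\gamma=0$: the undamped beam coupled to the fluid satisfies the very same dissipation identity, yet its beam component generates a group, not an analytic semigroup. Analyticity here genuinely relies on the structural damping $\gamma\Delta_{s}$, and extracting it requires more than the first energy estimate (for the beam alone this is the Chen--Triggiani theorem cited in the paper).

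The paper avoids this by a different route: it writes $\mathcal{A}=\mathcal{A}_{1}+\mathcal{A}_{2}$, where $\mathcal{A}_{1}$ is triangular (the beam block, analytic by Chen--Triggiani, is solved first and its output fed into the Stokes resolvent problem, which carries the troublesome $-A_{s}\Pi L_{1}$ coupling), and $\mathcal{A}_{2}$ collects the pressure feedback $N_{v}$ and the added-mass corrections $K_{s}=(I+N_{s})^{-1}-I$. Sectoriality of $\mathcal{A}_{1}$ is inherited from its diagonal blocks via the triangular structure, and $\mathcal{A}_{2}$ is shown to be $\mathcal{A}_{1}$-bounded with relative bound zero, so a standard perturbation theorem gives analyticity of $\mathcal{A}$. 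To repair your proof you would either need to reproduce such a decomposition or carry out a genuinely finer resolvent analysis that exploits $\gamma>0$; the energy identity alone will not do it.
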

\begin{proof} We write $\mathcal{A}=\mathcal{A}_{1}+\mathcal{A}_{2}$ with
\[\mathcal{A}_{1}=\begin{pmatrix}
A_{s}&0&-A_{s}\Pi L_{1}\\
0&0&I\\
0&A_{\alpha,\beta}&\delta\Delta_{s}\\
\end{pmatrix},
\]
\[\mathcal{A}_{2}=\begin{pmatrix}
0&0&0\\
0&0&0\\
(I+N_{s})^{-1}N_{v}&K_{s}A_{\alpha,\beta}&K_{s}\delta\Delta_{s}\\
\end{pmatrix},
\]
where $K_{s}=(I+N_{s})^{-1}-I$. We start with the resolvent of $\mathcal{A}_{1}$. Let $\lambda_{b}\in\mathbb{R}$ be such that $\{\lambda\in\mathbb{C}\mid \text{Re }\lambda\geq \lambda_{b}\}\subset\rho(\mathcal{A}_{b})$. For $\lambda\in\mathbb{C}$ such that $\text{Re }\lambda\geq \lambda_{b}$, consider the system
\begin{equation}\label{chap2-equation.A1}
\begin{aligned}
&\lambda\ug-\nu\Delta\ug + \nabla p = F_{1},\,\,\,\DIV\ug=0\,\text{ in }\Omega,\\
&\ug=\eta_{2}\textbf{e}_{2}\,\text{ on }\Gamma_{s},\,\,\,
\ug=0\,\text{ on }\Gamma_{i},\\
&u_2=0\text{ and }p=0\,\text{ on }\Gamma_{o},\\
&\ug=0\,\text{ on }\Gamma_{b},\\
&\lambda\eta_{1}-\eta_{2}=F_{2}\text{ on }\Gamma_{s},\\
&\lambda\eta_{2}-\beta\eta_{1,xx}-\gamma\eta_{2,xx}+\alpha\eta_{1,xxxx}=F_{3}\,\text{ on }\Gamma_{s},\\
&\eta_{1}=0\,\text{ and }\,\eta_{1,x}=0\,\text{ on }\{0,L\},\\
\end{aligned}
\end{equation}
for $(F_{1},F_{2},F_{3})\in\BS{H}$. This system is triangular: the beam equation can be solved first, and its solution injected in the Stokes system. The assumption on $\lambda$ ensures the existence of $(\eta_{1},\eta_{2})\in \left(H^{4}(\Gamma_{s})\cap H^{2}_{0}(\Gamma_{s})\right)\times H^{2}_{0}(\Gamma_{s})$ solution to the beam equations with the estimate,
\[\norme{\eta_{1}}_{H^{4}(\Gamma_{s})\cap H^{2}_{0}(\Gamma_{s})}+\norme{\eta_{2}}_{H^{2}_{0}(\Gamma_{s})}\leq C(\norme{F_{2}}_{H^{2}_{0}(\Gamma_{s})}+\norme{F_{3}}_{L^{2}(\Gamma_{s})}).\]
The Stokes system can then be solved, and we find $(\ug,p)\in \BS{H}^{2}(\Omega)\times H^{1}(\Omega)$ solution to $\eqref{chap2-equation.A1}_{1-4}$ such that
\begin{align*}
\norme{\ug}_{\BS{H}^{2}(\Omega)}+\norme{p}_{H^{1}(\Omega)}\leq{}& C(\norme{\eta_{2}}_{H^{2}_{0}(\Gamma_{s})}+\norme{F_{1}}_{\BS{L}_{2}(\Omega)})\\
\leq{}& C(\norme{F_{1}}_{\BS{L}_{2}(\Omega)}+\norme{F_{2}}_{H^{2}_{0}(\Gamma_{s})}+\norme{F_{3}}_{L^{2}(\Gamma_{s})}).
\end{align*}
System \eqref{chap2-equation.A1} is equivalent to
\[\begin{cases}
\begin{aligned}
&\lambda\begin{pmatrix}
\Pi\ug\\
\eta_{1}\\
\eta_{2}\\
\end{pmatrix}=\mathcal{A}_{1}\begin{pmatrix}
\Pi\ug\\
\eta_{1}\\
\eta_{2}
\end{pmatrix}+\begin{pmatrix}
F_{1}\\
F_{2}\\
F_{3}
\end{pmatrix},\\
&(I-\Pi)\ug=\nabla N_{s}(\eta_{2}),\\
&p=-\lambda N_{s}(\eta_{2})+N_{v}(\Pi\ug),
\end{aligned}
\end{cases}
\]
and the reasoning above shows that $\{\lambda\in\mathbb{C}\mid \text{Re }\lambda\geq \lambda_{b}\}\subset \rho(\mathcal{A}_{1})$. The resolvent estimates on $\mathcal{A}_{1}$ are similar to \cite[Theorem 3.2]{2017arXiv170706382C} and $(\mathcal{A}_{1},\mathcal{D}(\mathcal{A}_{1})=\mathcal{D}(\mathcal{A}))$ is sectorial. Using a similar technique as in \cite[Lemma 5.3]{2017arXiv170706382C} we prove that $(\mathcal{A}_{1},\mathcal{D}(\mathcal{A}_{1}))$ is the infinitesimal generator of a strongly continuous semigroup on $\BS{H}$. Finally, the previous properties imply that $(\mathcal{A}_{1},\mathcal{D}(\mathcal{A}_{1}))$ is the infinitesimal generator of an analytic semigroup on $\BS{H}$. 

As in \cite[Theorem 3.3]{2017arXiv170706382C}, the term $\mathcal{A}_{2}$ is $\mathcal{A}_{1}$-bounded with relative bound zero. Using \cite[Section 3.2, Theorem 2.1]{MR0512912}, we thus obtain the analyticity of $(\mathcal{A},\mathcal{D}(\mathcal{A}))$. The Rellich compact embedding theorem ensures that $\mathcal{D}(\mathcal{A})\overset{c}{\hookrightarrow} \BS{H}$ and the resolvent of $\mathcal{A}$ is therefore compact.
\end{proof}

\subsection{Time-periodic solutions of the linear system}\label{chap2-time-periodic-linear}
In this section we apply the existence results of periodic solutions developed in the appendix to the system \eqref{chap2-linear-existence1}.

In the appendix, we prove the existence of time-periodic solutions for abstract evolution equations $y'(t)=Ay(t)+f(t)$ under the assumption \eqref{chap2-assumptionAT}. This assumption is a restriction on the period $T$ of the system depending on the eigenvalues of $A$ lying on the imaginary axis. Here, this condition does not restrict the choice of $T$ as we are able to prove that all the non-zero eigenvalues of $\mathcal{A}$ have a negative real part. Indeed, let $\lambda\in\mathbb{C}$ be a non-zero eigenvalues of $\mathcal{A}$ and $(\Pi\ug,\eta_{1},\eta_{2})\in\mathcal{D}(\mathcal{A})$ be an associated eigenvector. The system
\[\lambda\begin{pmatrix}
\Pi\ug\\
\eta_{1}\\
\eta_{2}\\
\end{pmatrix}-\mathcal{A}\begin{pmatrix}
\Pi\ug\\
\eta_{1}\\
\eta_{2}\\
\end{pmatrix}=0,\]
is equivalent to
\begin{equation}\label{chap2-eigenvalue-neg}
\begin{aligned}
&\lambda\ug-\nu\Delta\ug + \nabla p = 0,\,\,\,\DIV\ug=0\,\text{ in }\Omega,\\
&\ug=\eta_{2}\textbf{e}_{2}\,\text{ on }\Gamma_{s},\,\,\,
\ug=0\,\text{ on }\Gamma_{i},\\
&u_2=0\text{ and }p=0\,\text{ on }\Gamma_{o},\\
&\ug=0\,\text{ on }\Gamma_{b},\\
&\lambda\eta_{1}-\eta_{2}=0\text{ on }\Gamma_{s},\\
&\lambda\eta_{2}-\beta\eta_{1,xx}-\gamma\eta_{2,xx}+\alpha\eta_{1,xxxx}=p\,\text{ on }\Gamma_{s},\\
&\eta_{1}=0\,\text{ and }\,\eta_{1,x}=0\,\text{ on }\{0,L\},\\
\end{aligned}
\end{equation}
with $\ug=\Pi\ug + \nabla N_{s}(\eta_{2})$ and $p=-\lambda N_{s}(\eta_{2})+N_{v}(\Pi\ug)$. Multiplying the first line of \eqref{chap2-eigenvalue-neg} by $\overline{\ug}$ (the complex conjugate of $\ug$) and integrating by part we obtain
\[\lambda\int_{\omega}\vert\ug\vert^{2} + \nu\int_{\Omega}\vert\nabla\ug\vert^{2}+\int_{\Gamma_{s}}p\overline{\eta}_{2}=0.\]
Then, multiplying the second line of the beam equation by $\overline{\eta}_{2}$, using the identity $\lambda\eta_{1}=\eta_{2}$ and integration by part we obtain
\[\int_{\Gamma_{s}}p\overline{\eta}_{2}=\lambda\int_{\Gamma_{s}}\vert\eta_{2}\vert^{2}+\beta\overline{\lambda}\int_{\Gamma_{s}}\vert\eta_{1,x}\vert^{2}+\gamma\int_{\Gamma_{s}}\vert\eta_{2,x}\vert^{2}+\alpha\overline{\lambda}\int_{\Gamma_{s}}\vert\eta_{1,xx}\vert^{2}.\]
Combining the previous energy estimates we obtain
\[\lambda\left[\int_{\Omega}\vert\ug\vert^{2}+\int_{\Gamma_{s}}\vert\eta_{2}\vert^{2} \right] + \overline{\lambda}\left[ \beta\int_{\Gamma_{s}}\vert\eta_{1,x}\vert^{2}+\alpha\int_{\Gamma_{s}}\vert\eta_{1,xx}\vert^{2} \right] + \nu\int_{\Omega}\vert\nabla\ug\vert^{2}+\gamma\int_{\Gamma_{s}}\vert\eta_{2,x}\vert^{2}=0.\]
Taking the real part of the previous identity we deduce that $\text{Re }\lambda<0$. It is easily verified that $0\not\in \sigma_{p}(\mathcal{A})$ (recall that $\sigma_{p}(\mathcal{A})=\sigma(\mathcal{A})$ as the resolvent of $\mathcal{A}$ is compact) and we can apply Theorem \ref{chap2-thm-C0} to solve the linear system \eqref{chap2-evolution-existence1} without restriction on the period $T$. Let $\BS{W}$ be the set defined by
\begin{align*}
\BS{W}:={}&\mathcal{C}^{\theta}_{\sharp}([0,T];\BS{L}^{2}(\Omega))\times \left(\mathcal{C}^{1+\theta}_{\sharp}([0,T];\BS{L}^{2}(\Omega))\cap \mathcal{C}^{\theta}_{\sharp}([0,T];\BS{H}^{2}(\Omega)\cap \BS{H}^{1}_{0}(\Omega))\right)\\
&\times \mathcal{C}^{\theta}_{\sharp}([0,T];H^{1/2}(\Gamma_{o}))\times \mathcal{C}^{\theta}_{\sharp}([0,T];L^{2}(\Gamma_{s})).
\end{align*}
The regularity space for the beam is denoted by 
\[
\mathcal{C}^{\theta}_{\text{beam}}:=\mathcal{C}^{\theta}_{\sharp}([0,T];H^{4}(\Gamma_{s})\cap H^{2}_{0}(\Gamma_{s}))\cap \mathcal{C}^{1+\theta}_{\sharp}([0,T];H^{2}_{0}(\Gamma_{s}))\cap \mathcal{C}^{2+\theta}_{\sharp}([0,T];L^{2}(\Gamma_{s})).\]
\begin{theorem}\label{chap2-mainthmlinear}For all $T>0$ and $(\BS{f},\BS{w},\Theta,h)\in \BS{W}$, (\ref{chap2-linear-existence1}) admits a unique periodic solution 
\[
(\ug,p,\eta)\in \left(\mathcal{C}^{1+\theta}_{\sharp}([0,T];\BS{L}^{2}(\Omega))\cap\mathcal{C}^{\theta}_{\sharp}([0,T];\BS{H}^{2}(\Omega))\right)\times\mathcal{C}^{\theta}_{\sharp}([0,T];H^{1}(\Omega))\times\mathcal{C}^{\theta}_{\text{beam}}.\]
Moreover $(\ug(0),\eta(0),\eta_{t}(0))$ is given by
\[\ug(0)=\Pi\vg(0)+\nabla N_{s}(\eta_{t}(0))+\BS{w}(0)+L_{\Gamma_{i},1}(\boldsymbol{\omega}_{1})(0)\,\text{ and }
\begin{pmatrix}
\Pi\vg(0)\\
\eta(0)\\
\eta_t(0)\\
\end{pmatrix}=P_{\mathcal{A}}\BS{f},
\]
where $P_{\mathcal{A}}$ is defined in Lemma \ref{chap2-lemmaAn2}. Finally, the following estimate holds
\begin{equation}\label{chap2-Estlineairemain}\begin{aligned}
&\norme{\ug}_{\mathcal{C}^{1+\theta}_{\sharp}([0,T];\BS{L}^{2}(\Omega))\cap\mathcal{C}^{\theta}_{\sharp}([0,T];\BS{H}^{2}(\Omega))}+\norme{p}_{\mathcal{C}^{\theta}_{\sharp}([0,T];H^{1}(\Omega))}+\norme{\eta}_{\mathcal{C}^{\theta}_{\text{beam}}}\\
&{}\leq C_{L}\Big[\norme{\boldsymbol{\omega}_{1}}_{\mathcal{C}^{\theta}_{\sharp}([0,T];\BS{H}^{3/2}_{0}(\Gamma_{i}))\cap \mathcal{C}^{1+\theta}_{\sharp}([0,T];\BS{H}^{-1/2}(\Gamma_{i}))}+\norme{\omega_{2}}_{\mathcal{C}^{\theta}_{\sharp}([0,T];H^{1/2}(\Gamma_{o}))}\\
&\qquad+\norme{(\BS{f},\BS{w},\Theta,h)}_{\BS{W}}\Big].
\end{aligned}
\end{equation}
\end{theorem}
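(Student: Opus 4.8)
The plan is to follow the reduction already prepared in Sections \ref{section-semigroup-formulation}--\ref{chap2-time-periodic-linear}: lift away the non-homogeneous boundary data $(\boldsymbol{\omega}_1,\omega_2,\Theta)$ so that \eqref{chap2-linear-existence1} becomes \eqref{chap2-linear-existence2}, recast the fluid part via Theorem \ref{chap2-lin.recast} as the abstract periodic evolution equation \eqref{chap2-evolution-existence1} for $(\Pi\vg,\eta,\eta_t)$ driven by $\mathcal{A}$, solve that equation with the abstract machinery of the appendix, and finally reconstruct $(\ug,p,\eta)$ from the abstract solution and check that it has the announced regularity.

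The first step I would carry out is to verify that the forcing $\BS{f}=(\Pi F,0,(I+N_s)^{-1}(N_p(F)+H))$ of \eqref{chap2-evolution-existence1} belongs to $\mathcal{C}^{\theta}_{\sharp}([0,T];\BS{H})$ with norm controlled by the right-hand side of \eqref{chap2-Estlineairemain}. Here $F=\BS{f}-\BS{w}_t+\nu\Delta\BS{w}-\partial_tL_{\Gamma_i,1}(\boldsymbol{\omega}_1)-\nabla L_{\Gamma_o}(\omega_2)-\nabla L_{\Gamma_o}(\Theta)$: the terms $\BS{w}_t$ and $\nu\Delta\BS{w}$ are absorbed by the two factors defining $\BS{w}$ in $\BS{W}$, the continuous lifting $L_{\Gamma_o}$ sends $H^{1/2}(\Gamma_o)$ into $H^1(\Omega)$, and the delicate term $\partial_tL_{\Gamma_i,1}(\boldsymbol{\omega}_1)$ is handled by applying the lifting $L_{\Gamma_i,1}$ to $\partial_t\boldsymbol{\omega}_1\in\mathcal{C}^{\theta}_{\sharp}([0,T];\BS{H}^{-1/2}(\Gamma_i))$, using that $L_{\Gamma_i,1}$ extends continuously to $\mathcal{L}(\BS{H}^{-1/2}(\Gamma_i),\BS{L}^2(\Omega))$, so $F\in\mathcal{C}^{\theta}_{\sharp}([0,T];\BS{L}^2(\Omega))$. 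Likewise $H=w_{2,z}+L_{\Gamma_i,2}(\boldsymbol{\omega}_1)+L_{\Gamma_o}(\omega_2)+L_{\Gamma_o}(\Theta)+h$ has an $L^2(\Gamma_s)$-trace lying in $\mathcal{C}^{\theta}_{\sharp}([0,T];L^2(\Gamma_s))$. Combining this with continuity of $\Pi$, of $N_p\in\mathcal{L}(\BS{L}^2(\Omega),H^1_{\Gamma_o}(\Omega))$ and of $(I+N_s)^{-1}$ on $L^2(\Gamma_s)$ gives the claim. Then, by Theorem \ref{chap2-s5thm1} the operator $(\mathcal{A},\mathcal{D}(\mathcal{A}))$ generates an analytic semigroup on $\BS{H}$ with compact resolvent, and the energy identity established just before the statement shows $\sigma(\mathcal{A})\cap i\Real=\emptyset$; hence assumption \eqref{chap2-assumptionAT} holds for every $T>0$, and Theorem \ref{chap2-thm-C0} provides a unique $T$-periodic solution $(\Pi\vg,\eta,\eta_t)\in\mathcal{C}^{1+\theta}_{\sharp}([0,T];\BS{H})\cap\mathcal{C}^{\theta}_{\sharp}([0,T];\mathcal{D}(\mathcal{A}))$, with initial value $P_{\mathcal{A}}\BS{f}$ as in Lemma \ref{chap2-lemmaAn2} and with the maximal-regularity estimate in terms of $\norme{\BS{f}}_{\mathcal{C}^{\theta}_{\sharp}([0,T];\BS{H})}$. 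Unwinding the definition of $\mathcal{D}(\mathcal{A})$ immediately yields $\eta\in\mathcal{C}^{\theta}_{\sharp}([0,T];H^4(\Gamma_s)\cap H^2_0(\Gamma_s))$, $\eta_t\in\mathcal{C}^{\theta}_{\sharp}([0,T];H^2_0(\Gamma_s))$, hence (from the $\mathcal{C}^{1+\theta}$-component, $\eta_{tt}\in\mathcal{C}^{\theta}_{\sharp}([0,T];L^2(\Gamma_s))$) $\eta\in\mathcal{C}^{\theta}_{\text{beam}}$, together with $\Pi\vg\in\mathcal{C}^{\theta}_{\sharp}([0,T];\BS{H}^2(\Omega))\cap\mathcal{C}^{1+\theta}_{\sharp}([0,T];\BS{L}^2(\Omega))$.

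The reconstruction step then sets $\vg=\Pi\vg+\nabla N_s(\eta_t)$, $q=-N_s(\eta_t)_t+N_v(\Pi\vg)+N_p(F)$, and finally $(\ug,p)=(\vg+\BS{w}+L_{\Gamma_i,1}(\boldsymbol{\omega}_1),\,q+L_{\Gamma_o}(\omega_2)+L_{\Gamma_o}(\Theta)+L_{\Gamma_i,2}(\boldsymbol{\omega}_1))$. The announced regularity of $\ug$, $p$, $\eta$ is obtained by adding contributions: $\nabla N_s(\eta_t)\in\mathcal{C}^{\theta}_{\sharp}([0,T];\BS{H}^2(\Omega))$ using $N_s\in\mathcal{L}(\mathcal{H}^{3/2}_{00}(\Gamma_s),H^3(\Omega))$ together with $H^2_0(\Gamma_s)\hookrightarrow H^{3/2}_{00}(\Gamma_s)$, and $\nabla N_s(\eta_t)\in\mathcal{C}^{1+\theta}_{\sharp}([0,T];\BS{L}^2(\Omega))$ and $N_s(\eta_t)_t=N_s(\eta_{tt})\in\mathcal{C}^{\theta}_{\sharp}([0,T];H^1(\Omega))$ using the extension $N_s\in\mathcal{L}((\mathcal{H}^{1/2}(\Gamma_s))',H^1(\Omega))$ of Lemma \ref{chap2-reg-Ns-Ndiv}; $N_v(\Pi\vg)\in\mathcal{C}^{\theta}_{\sharp}([0,T];H^1(\Omega))$; $N_p(F)\in\mathcal{C}^{\theta}_{\sharp}([0,T];H^1(\Omega))$; and the liftings $L_{\Gamma_i,1},L_{\Gamma_i,2},L_{\Gamma_o}$ contribute the declared regularity from their boundary data. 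By Theorem \ref{chap2-lin.recast} (hence Theorem \ref{chap2-reformulation.stokes}) the pair $(\vg,q)$ solves the fluid equations of \eqref{chap2-linear-existence2}, so $(\ug,p,\eta)$ solves \eqref{chap2-linear-existence1}; the full periodicity $\ug(0)=\ug(T)$ follows from $\Pi\vg(0)=\Pi\vg(T)$, $\eta_t(0)=\eta_t(T)$ and the $T$-periodicity of $\BS{w}$ and $\boldsymbol{\omega}_1$, and the formula for $(\ug(0),\eta(0),\eta_t(0))$ is the reconstruction evaluated at $t=0$. For uniqueness, any periodic solution of the claimed regularity gives, after subtracting the fixed liftings and applying Theorem \ref{chap2-lin.recast}, a periodic solution of \eqref{chap2-evolution-existence1}, hence it coincides with the one above by the uniqueness in Theorem \ref{chap2-thm-C0}, and then $(I-\Pi)\vg$ and $q$ are determined algebraically. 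Chaining the operator norms of all maps involved with the abstract estimate of Theorem \ref{chap2-thm-C0} (whose constant depends only on $\mathcal{A}$ and $T$) produces \eqref{chap2-Estlineairemain}.

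The main obstacle is not any single hard inequality but the regularity bookkeeping in the first and third steps: one must make sure that the time derivatives of the boundary liftings and of the auxiliary potentials $N_s(\eta_t)$, $N_s(\eta_{tt})$ land in exactly the spaces the abstract theorem needs on input and the statement claims on output. This is precisely where the extension of $L_{\Gamma_i,1}$ to $\BS{H}^{-1/2}(\Gamma_i)$-data and the negative-order mapping properties of $N_s$ from Lemma \ref{chap2-reg-Ns-Ndiv} are essential; once these are in hand, the existence, uniqueness and estimate reduce to a direct invocation of the abstract periodic maximal-regularity result.
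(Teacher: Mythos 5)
Your proposal is correct and follows exactly the route the paper intends: the theorem is not given a separate proof in the text precisely because it is the assembly of the lifting reduction \eqref{chap2-linear-existence1}$\to$\eqref{chap2-linear-existence2}, Theorem \ref{chap2-lin.recast}, the semigroup formulation \eqref{chap2-evolution-existence1}, Theorem \ref{chap2-s5thm1}, the eigenvalue computation showing \eqref{chap2-assumptionAT} holds for every $T$, and Theorem \ref{chap2-thm-C0}, followed by the reconstruction of $(\ug,p)$ via $N_{s}$, $N_{v}$, $N_{p}$ and the liftings. Your regularity bookkeeping (in particular the extension of $L_{\Gamma_{i},1}$ to $\BS{H}^{-1/2}(\Gamma_{i})$-data for $\partial_{t}L_{\Gamma_{i},1}(\boldsymbol{\omega}_{1})$ and the use of Lemma \ref{chap2-reg-Ns-Ndiv} for $N_{s}(\eta_{tt})$) is exactly what the hypotheses on $\BS{W}$ and on $(\boldsymbol{\omega}_{1},\omega_{2})$ are designed for.
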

\section{Nonlinear system}\label{chap2-nonlinear}
In this section we prove the existence of classical solutions for the nonlinear system (\ref{chap2-mainexistence}) using a fixed point argument. Without additional source terms in the model, here given through the inflow and outflow boundary conditions, the solution obtained with the fixed point procedure is the null solution. Hence, in what follows, the pair $(\boldsymbol{\omega}_{1},\omega_{2})$ is assumed to be non trivial, eventually small enough, and represent the `impulse' of the system. The period $T$ of $(\boldsymbol{\omega}_{1},\omega_{2})$ determines the period of the whole system.

Let $T>0$ be a fixed time and 
\[(\boldsymbol{\omega}_{1},\omega_{2})\in\left(\mathcal{C}^{\theta}_{\sharp}([0,T];\BS{H}^{3/2}_{0}(\Gamma_{i}))\cap \mathcal{C}^{1+\theta}_{\sharp}([0,T];\BS{H}^{-1/2}(\Gamma_{i}))\right)\times\mathcal{C}^{\theta}_{\sharp}([0,T];H^{1/2}(\Gamma_{o})).\]
Consider the Banach space $\mathcal{X}$ defined by
\[
\mathcal{X}=\left(\mathcal{C}^{1+\theta}_{\sharp}([0,T];\BS{L}^{2}(\Omega))\cap\mathcal{C}^{\theta}_{\sharp}([0,T];\BS{H}^{2}(\Omega))\right)\times\mathcal{C}^{\theta}_{\sharp}([0,T];H^{1}(\Omega))
\times \mathcal{C}^{\theta}_{\text{beam}},\\
\]
and
\begin{align*}
\mathcal{B}(R,\mu)=\{(\ug,p,\eta)\in\mathcal{X}&\,\vert\,\norme{(\ug,p,\eta)}_{\mathcal{X}}\leq R,\,\,\,\norme{(1+\eta)^{-1}}_{\mathcal{C}([0,T]\times\Gamma_{s})}\leq \mu\},
\end{align*}
with $R>0$ and $\mu>0$.
\begin{theorem}\label{chap2-estNLmain}Let $R>0$, $\mu>0$ and $(\ug,p,\eta)\in\mathcal{B}(R,\mu)$. There exists a polynomial $Q\in \mathbb{R}^{+}[X]$ satisfying $Q(0)=0$ and a constant $C(\mu)>0$ such that the following estimates hold
\[||\underbrace{(\BS{G}(\ug,p,\eta),\BS{w}(\ug,\eta),(1/2)\vert\ug\vert^{2},\Psi(\ug,\eta))}_{=:\BS{F}(\ug,p,\eta)}||_{\BS{W}}\leq C(\mu)Q(R)\norme{(\ug,p,\eta)}_{\mathcal{X}},\]
and for $(\ug_{i},p_{i},\eta_{i})\in \mathcal{B}(R,\mu)$ ($i=1,2$)
\begin{equation}\label{chap2-lipstchiz-main}
\norme{\BS{F}(\ug_{1},p_{1},\eta_{1})-\BS{F}(\ug_{2},p_{2},\eta_{2})}_{\BS{W}}\leq C(\mu)Q(R)\norme{(\ug_{1},p_{1},\eta_{1})-(\ug_{2},p_{2},\eta_{2})}_{\mathcal{X}}
\end{equation}
\end{theorem}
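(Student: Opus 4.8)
The plan is to treat each of the four components of $\BS{F}=(\BS{G},\BS{w},(1/2)|\ug|^2,\Psi)$ separately, and for each one to identify the worst nonlinear term, estimate it in the relevant norm of $\BS{W}$, and then collect the contributions into a single polynomial $Q$ with $Q(0)=0$. The source of the constant $C(\mu)$ is always the same: every appearance of $(1+\eta)^{-1}$ (or $(1+\eta)^{-2}$ and higher inverse powers, which also arise from differentiating the change of variables) is controlled by $\mu$ via the bound $\norme{(1+\eta)^{-1}}_{\mathcal{C}([0,T]\times\Gamma_s)}\le\mu$, together with the fact that $\eta\mapsto (1+\eta)^{-1}$ is locally Lipschitz on any set where this quantity is bounded; since $\eta\in\mathcal{C}^{\theta}_{\text{beam}}\hookrightarrow\mathcal{C}^{\theta}_{\sharp}([0,T];H^4(\Gamma_s))$ and $H^4(\Gamma_s)$ is a Banach algebra embedding into $\mathcal{C}^2(\overline{\Gamma_s})$, all the $\eta$-dependent coefficients ($\eta$, $\eta_x$, $\eta_{xx}$, $\eta_t$, $\eta_{tx}$, etc.\ multiplied or divided by powers of $1+\eta$) lie in $\mathcal{C}^{\theta}_{\sharp}([0,T];H^k(\Gamma_s))$ for the $k$ we need, with norm bounded by $C(\mu)Q_0(R)$ for a suitable polynomial $Q_0$.

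The first step is to record the algebra/multiplication estimates in the anisotropic parabolic-type spaces making up $\BS{W}$. The key facts are: (i) $\mathcal{C}^{\theta}_{\sharp}([0,T];H^2(\Omega))$ is a Banach algebra (since $H^2(\Omega)$ is, $\Omega$ being a bounded 2D Lipschitz domain) and multiplication maps $\mathcal{C}^{\theta}_{\sharp}([0,T];H^2(\Omega))\times\mathcal{C}^{\theta}_{\sharp}([0,T];H^1(\Omega))\to\mathcal{C}^{\theta}_{\sharp}([0,T];H^1(\Omega))$; (ii) the Hölder-in-time norms satisfy the product rule $\norme{fg}_{\mathcal{C}^{\theta}}\le C\norme{f}_{\mathcal{C}^{\theta}}\norme{g}_{\mathcal{C}^{\theta}}$, and more delicately $\norme{fg}_{\mathcal{C}^{1+\theta}([0,T];L^2)}\le C(\norme{f}_{\mathcal{C}^{1+\theta}([0,T];L^\infty)}\norme{g}_{\mathcal{C}^{\theta}([0,T];L^2)}+\norme{f}_{\mathcal{C}^{\theta}([0,T];L^\infty)}\norme{g}_{\mathcal{C}^{1+\theta}([0,T];L^2)})$ when one factor is regular enough in space; (iii) composition: $\eta\mapsto(1+\eta)^{-1}$ maps $\mathcal{B}(R,\mu)$ into a bounded set of $\mathcal{C}^{\theta}_{\sharp}([0,T];H^4(\Gamma_s))$ with the Lipschitz bound needed for \eqref{chap2-lipstchiz-main}. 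With these in hand the estimates are mechanical: for $\BS{w}=-\eta\hat u_1\textbf{e}_1+z\eta_x\hat u_1\textbf{e}_2$, the product of an $H^4(\Gamma_s)$-coefficient (extended trivially in $z$, still in $H^2(\Omega)$) with $\ug\in\mathcal{C}^{\theta}([0,T];\BS{H}^2(\Omega))\cap\mathcal{C}^{1+\theta}([0,T];\BS{L}^2(\Omega))$ lands in the required space for $\BS{w}$; for $(1/2)|\ug|^2$ on $\Gamma_o$, the trace $\ug|_{\Gamma_o}\in\mathcal{C}^{\theta}([0,T];H^{3/2}(\Gamma_o))$ and $H^{3/2}(\Gamma_o)\hookrightarrow H^{1/2}(\Gamma_o)$ is an algebra in 1D; for $\Psi$, every term is a product of an $H^4(\Gamma_s)$-coefficient with a first-order $z$- or $x$-derivative of $\ug$, hence lies in $\mathcal{C}^{\theta}([0,T];L^2(\Gamma_s))$ after taking the trace $z=1$.

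The main obstacle is $\BS{G}$, and within it the terms that are genuinely second-order or first-order-in-time in $\ug$ or first-order in $\hat p$ — namely $-\eta\hat{\ug}_t$, the bracket $\nu[-2z\eta_x\hat{\ug}_{xz}+\eta\hat{\ug}_{xx}+\tfrac{z^2\eta_x^2-\eta}{1+\eta}\hat{\ug}_{zz}]$, and the pressure terms $z\eta_x\hat p_z\textbf{e}_1-z\eta\hat p_x\textbf{e}_1$ — because we only demand $\BS{G}\in\mathcal{C}^{\theta}_{\sharp}([0,T];\BS{L}^2(\Omega))$, i.e.\ no space regularity and no extra time regularity, so these are all fine: $\eta\hat{\ug}_t\in\mathcal{C}^{\theta}([0,T];L^2)$ because $\eta\in\mathcal{C}^{\theta}([0,T];\mathcal{C}(\overline{\Gamma_s}))$ and $\hat{\ug}_t\in\mathcal{C}^{\theta}([0,T];\BS{L}^2)$; the second-derivative terms pair $\mathcal{C}^{\theta}([0,T];\mathcal{C}^1(\overline{\Gamma_s}))$-coefficients against $\hat{\ug}_{xx},\hat{\ug}_{zz},\hat{\ug}_{xz}\in\mathcal{C}^{\theta}([0,T];\BS{L}^2)$; the pressure terms pair such coefficients against $\nabla\hat p\in\mathcal{C}^{\theta}([0,T];\BS{L}^2)$. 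The only slightly subtle point is that each of these products carries at least one factor of $\eta$, $\eta_x$, $\eta_{xx}$, or a difference thereof, each of which vanishes (along with its derivatives) when $\eta=0$; together with the analogous observation for $\BS{w}$, $|\ug|^2$ (which is quadratic in $\ug$), and $\Psi$ (which carries a factor of $\eta_x$ or $\eta$), this is exactly what guarantees that the resulting polynomial $Q$ in $R=\norme{(\ug,p,\eta)}_{\mathcal{X}}$ has no constant term, $Q(0)=0$. For the Lipschitz estimate \eqref{chap2-lipstchiz-main} one writes each nonlinear term $N(\ug_1,p_1,\eta_1)-N(\ug_2,p_2,\eta_2)$ as a telescoping sum in which one factor is a difference and the remaining factors are bounded by $C(\mu)Q(R)$; the only non-obvious Lipschitz bound, for the rational coefficients $(1+\eta)^{-1}$ etc., follows from $(1+\eta_1)^{-1}-(1+\eta_2)^{-1}=(1+\eta_1)^{-1}(1+\eta_2)^{-1}(\eta_2-\eta_1)$ and the uniform bound $\mu$ on the inverses together with the algebra property of $H^4(\Gamma_s)$. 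Collecting all contributions yields a single $C(\mu)$ and a single $Q$ with $Q(0)=0$ serving both inequalities, which completes the proof.
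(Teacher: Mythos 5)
Your proposal is correct and follows essentially the same route as the paper: product and composition estimates in the H\"older-in-time spaces, the embedding $H^{4}(\Gamma_{s})\cap H^{2}(\Gamma_s)\hookrightarrow W^{2,\infty}$-type controls on the $\eta$-coefficients, the bound $\mu$ on $(1+\eta)^{-1}$ producing $C(\mu)$, and the observation that every term is at least quadratic to get $Q(0)=0$. The paper's own proof is simply terser, working out only the representative term $\eta\ug_{t}$ and deferring the remaining term-by-term estimates to Section 4.1 of the cited companion paper, whereas you spell them out component by component.
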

\begin{proof}
The nonlinear terms $(\BS{G}(\ug,p,\eta),\BS{w}(\ug,\eta),(1/2)\vert\ug\vert^{2},\Psi(\ug,p))$ are already estimated in \cite[Section 4.1]{2017arXiv170706382C} with explicit time dependency for Sobolev regularity in time. Here the time dependency is straightforward as all the functions involved in the estimates are H\"older continuous and $T$ is fixed. For example:
\[
\norme{\eta\ug_{t}}_{\mathcal{C}^{\theta}([0,T];\BS{L}^{2}(\Omega))}=\norme{\eta\ug_{t}}_{\mathcal{C}([0,T];\BS{L}^{2}(\Omega))}+\sup_{t_{1}\neq t_{2}}\frac{\norme{\eta(t_{1})\ug_{t}(t_{1})-\eta(t_{2})\ug_{t}(t_{2})}_{\BS{L}^{2}(\Omega)}}{\vert t_{1}-t_{2}\vert^{\theta}},\]
and the following estimates hold
\[\norme{\eta\ug_{t}}_{\mathcal{C}([0,T];\BS{L}^{2}(\Omega))}\leq \norme{\eta}_{\mathcal{C}([0,T];L^{\infty}(\Gamma_{s}))}\norme{\ug_{t}}_{\mathcal{C}([0,T];\BS{L}^{2}(\Omega))}\]
\begin{align*}
&\sup_{t_{1}\neq t_{2}}\frac{\norme{\eta(t_{1})\ug_{t}(t_{1})-\eta(t_{2})\ug_{t}(t_{2})}_{\BS{L}^{2}(\Omega)}}{\vert t_{1}-t_{2}\vert^{\theta}}\\
&\leq \sup_{t_{1}\neq t_{2}}\frac{\norme{\eta(t_{1})-\eta(t_{2})}_{L^{\infty}(\Gamma_{s})}}{\vert t_{1}-t_{2}\vert^{\theta}}\norme{\ug_{t}}_{\mathcal{C}([0,T];\BS{L}^{2}(\Omega))}\\
&+\sup_{t_{1}\neq t_{2}}\frac{\norme{\ug_{t}(t_{1})-\ug_{t}(t_{2})}_{\BS{L}^{2}(\Omega)}}{\vert t_{1}-t_{2}\vert^{\theta}}\norme{\eta}_{\mathcal{C}([0,T];L^{\infty}(\Gamma_{s}))}\\
&\leq \norme{\eta}_{\mathcal{C}^{\theta}([0,T];H^{4}(\Gamma_{s}))}\norme{\ug_{t}}_{\mathcal{C}([0,T];\BS{L}^{2}(\Omega))}+\norme{\ug_{t}}_{\mathcal{C}^{\theta}([0,T];\BS{L}^{2}(\Omega))}\norme{\eta}_{\mathcal{C}([0,T];L^{\infty}(\Gamma_{s}))}\\
&\leq 2\norme{\eta}_{\mathcal{C}^{\theta}([0,T];H^{4}(\Gamma_{s}))}\norme{\ug_{t}}_{\mathcal{C}^{\theta}([0,T];\BS{L}^{2}(\Omega))}.
\end{align*}
The other `ball' estimates and the Lipschitz estimates \eqref{chap2-lipstchiz-main} are obtained through the same techniques using the following Sobolev embeddings
\begin{align*}
&\norme{\eta}_{\mathcal{C}^{\theta}([0,T];L^{\infty}(\Gamma_{s}))}+\norme{\eta_{x}}_{\mathcal{C}^{\theta}([0,T];L^{\infty}(\Gamma_{s}))}+\norme{\eta_{xx}}_{\mathcal{C}^{\theta}([0,T];L^{\infty}(\Gamma_{s}))}+\norme{\eta_{xxx}}_{\mathcal{C}^{\theta}([0,T];L^{\infty}(\Gamma_{s}))}\\
&+\norme{\eta_{t}}_{\mathcal{C}^{\theta}([0,T];L^{\infty}(\Gamma_{s}))}+\norme{\eta_{tx}}_{\mathcal{C}^{\theta}([0,T];L^{\infty}(\Gamma_{s}))}\leq C\norme{\eta}_{\mathcal{C}^{\theta}([0,T];H^{4}(\Gamma_{s}))\cap \mathcal{C}^{1+\theta}([0,T];H^{2}(\Gamma_{s}))}.
\end{align*}
Finally remarks that all the nonlinear terms are at least quadratic and thus are bounded by $\norme{(\ug,p,\eta)}_{\mathcal{X}}^{\alpha}$ for $\alpha\geq 2$. Writing $\norme{(\ug,p,\eta)}_{\mathcal{X}}^{\alpha}\leq R^{\alpha-1}\norme{(\ug,p,\eta)}_{\mathcal{X}}$, with $\alpha-1\geq 1$, concludes the proof.
\end{proof}
For $R>0$ and $\mu>0$ introduce the map
\begin{equation}\label{chap2-T}
\mathcal{F} :
\begin{cases}
\mathcal{B}(R,\mu)&\longrightarrow \mathcal{X},\\
(\ug,p,\eta)&\longmapsto (\ug^{*},p^{*},\eta^{*}),\\
\end{cases}
\end{equation}
where $(\ug^{*},p^{*},\eta^{*})$ is the solution to (\ref{chap2-linear-existence1}) with right-hand side
\[(\BS{f},\BS{w},\Theta,h)=(\BS{G}(\ug,p,\eta),\BS{w}(\ug,\eta),(1/2)\vert\ug\vert^{2},\Psi(\ug,\eta)).\]
\begin{theorem}\label{chap2-Fixpoint}There exists $R^{*}>0$ and $\mu^{*}>0$ such that for all 
\[(\boldsymbol{\omega}_{1},\omega_{2})\in\left(\mathcal{C}^{\theta}_{\sharp}([0,T];\BS{H}^{3/2}_{0}(\Gamma_{i}))\cap \mathcal{C}^{1+\theta}_{\sharp}([0,T];\BS{H}^{-1/2}(\Gamma_{i}))\right)\times\mathcal{C}^{\theta}_{\sharp}([0,T];H^{1/2}(\Gamma_{o})),\]
satisfying
\[\norme{\boldsymbol{\omega}_{1}}_{\mathcal{C}^{\theta}_{\sharp}([0,T];\BS{H}^{3/2}_{0}(\Gamma_{i}))\cap \mathcal{C}^{1+\theta}_{\sharp}([0,T];\BS{H}^{-1/2}(\Gamma_{i}))}+\norme{\omega_{2}}_{\mathcal{C}^{\theta}_{\sharp}([0,T];H^{1/2}(\Gamma_{o}))}\leq \frac{R^{*}}{2C_{L}},\]
where $C_{L}$ is the constant involved in (\ref{chap2-Estlineairemain}), system (\ref{chap2-mainexistence}) admits a unique solution $(\ug,p,\eta)$ in the ball $\mathcal{B}(R^{*},\mu^{*})$.
\end{theorem}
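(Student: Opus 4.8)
The plan is to obtain $(\ug,p,\eta)$ as the unique fixed point, inside $\mathcal{B}(R^{*},\mu^{*})$, of the map $\mathcal{F}$ introduced in \eqref{chap2-T}, via the Banach fixed point theorem with the distance inherited from the norm of $\mathcal{X}$. The two substantial inputs are already available: Theorem \ref{chap2-mainthmlinear} shows that $\mathcal{F}$ is well defined on $\mathcal{B}(R,\mu)$ with values in $\mathcal{X}$ and supplies the linear estimate \eqref{chap2-Estlineairemain} with constant $C_{L}$, while Theorem \ref{chap2-estNLmain} bounds the nonlinear source $\BS{F}(\ug,p,\eta)$ by $C(\mu)Q(R)\norme{(\ug,p,\eta)}_{\mathcal{X}}$ and gives the Lipschitz estimate \eqref{chap2-lipstchiz-main}. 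The only point requiring genuine care is that $R$ and $\mu$ cannot be shrunk independently: $C(\mu)$ degenerates as $\mu\to\infty$, and the pointwise lower bound on $1+\eta$ is itself governed by $R$; the remedy is to pin down $\mu^{*}$ first, from a Sobolev embedding, and only afterwards take $R^{*}$ small.

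First I would fix $\mu^{*}$. For $(\ug,p,\eta)\in\mathcal{B}(R,\mu)$ one has $\eta\in\mathcal{C}^{\theta}_{\sharp}([0,T];H^{4}(\Gamma_{s}))$ with $\norme{\eta}_{\mathcal{X}}\leq R$, so the one-dimensional embedding $H^{4}(\Gamma_{s})\hookrightarrow\mathcal{C}(\overline{\Gamma_{s}})$ yields a constant $c_{0}>0$ with $\norme{\eta}_{\mathcal{C}([0,T]\times\Gamma_{s})}\leq c_{0}R$. If $R^{*}\leq (2c_{0})^{-1}$, then $1+\eta\geq 1/2$ on $[0,T]\times\Gamma_{s}$, hence $\norme{(1+\eta)^{-1}}_{\mathcal{C}([0,T]\times\Gamma_{s})}\leq 2$; I therefore set $\mu^{*}=2$, after which $C(\mu^{*})$ is a fixed constant. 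Next comes the self-mapping property: for $(\ug,p,\eta)\in\mathcal{B}(R^{*},\mu^{*})$, writing $(\ug^{*},p^{*},\eta^{*})=\mathcal{F}(\ug,p,\eta)$ and feeding Theorem \ref{chap2-estNLmain} together with the smallness hypothesis $\norme{\boldsymbol{\omega}_{1}}+\norme{\omega_{2}}\leq R^{*}/(2C_{L})$ into \eqref{chap2-Estlineairemain} gives
\[\norme{(\ug^{*},p^{*},\eta^{*})}_{\mathcal{X}}\leq C_{L}\Big(\tfrac{R^{*}}{2C_{L}}+C(\mu^{*})Q(R^{*})R^{*}\Big)=\big(\tfrac12+C_{L}C(\mu^{*})Q(R^{*})\big)R^{*}.\]
Since $Q$ is a polynomial with $Q(0)=0$, shrinking $R^{*}$ further so that $C_{L}C(\mu^{*})Q(R^{*})\leq 1/2$ yields $\norme{(\ug^{*},p^{*},\eta^{*})}_{\mathcal{X}}\leq R^{*}$, and the embedding argument applied to $\eta^{*}$ gives $\norme{(1+\eta^{*})^{-1}}_{\mathcal{C}([0,T]\times\Gamma_{s})}\leq 2=\mu^{*}$; hence $\mathcal{F}(\mathcal{B}(R^{*},\mu^{*}))\subset\mathcal{B}(R^{*},\mu^{*})$.

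For the contraction, let $(\ug_{i},p_{i},\eta_{i})\in\mathcal{B}(R^{*},\mu^{*})$, $i=1,2$. Because the system \eqref{chap2-linear-existence1} is linear in all its data, the difference $\mathcal{F}(\ug_{1},p_{1},\eta_{1})-\mathcal{F}(\ug_{2},p_{2},\eta_{2})$ solves it with $\boldsymbol{\omega}_{1}=0$, $\omega_{2}=0$ and right-hand side $\BS{F}(\ug_{1},p_{1},\eta_{1})-\BS{F}(\ug_{2},p_{2},\eta_{2})$; applying \eqref{chap2-Estlineairemain} with vanishing boundary data and then \eqref{chap2-lipstchiz-main} gives
\[\norme{\mathcal{F}(\ug_{1},p_{1},\eta_{1})-\mathcal{F}(\ug_{2},p_{2},\eta_{2})}_{\mathcal{X}}\leq C_{L}C(\mu^{*})Q(R^{*})\norme{(\ug_{1},p_{1},\eta_{1})-(\ug_{2},p_{2},\eta_{2})}_{\mathcal{X}}\leq\tfrac12\norme{(\ug_{1},p_{1},\eta_{1})-(\ug_{2},p_{2},\eta_{2})}_{\mathcal{X}}.\]
Finally, $\mathcal{B}(R^{*},\mu^{*})$ is closed in $\mathcal{X}$ — the norm ball is closed, and if $\eta_{n}\to\eta$ in $\mathcal{X}$ then $\eta_{n}\to\eta$ uniformly on $[0,T]\times\Gamma_{s}$, so the inequality $1+\eta_{n}\geq 1/\mu^{*}$ passes to the limit — hence a complete metric space, and the Banach fixed point theorem gives a unique $(\ug,p,\eta)\in\mathcal{B}(R^{*},\mu^{*})$ with $\mathcal{F}(\ug,p,\eta)=(\ug,p,\eta)$. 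By the definition of $\mathcal{F}$ this means $(\ug,p,\eta)$ solves \eqref{chap2-linear-existence1} with source $(\BS{f},\BS{w},\Theta,h)=(\BS{G}(\ug,p,\eta),\BS{w}(\ug,\eta),(1/2)\vert\ug\vert^{2},\Psi(\ug,\eta))$, which, on comparing \eqref{chap2-linear-existence1} with \eqref{chap2-mainexistence}, is precisely the sought $T$-periodic solution of the nonlinear system. As already flagged, the only real obstacle is the ordering of the parameters $R^{*}$ and $\mu^{*}$; once $\mu^{*}$ is fixed from the embedding, everything reduces to the linear and nonlinear estimates proved above.
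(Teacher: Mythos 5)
Your proposal is correct and follows essentially the same route as the paper: fix $\mu^{*}$, use a Sobolev embedding to make the constraint $\norme{(1+\eta)^{-1}}_{\mathcal{C}([0,T]\times\Gamma_{s})}\leq\mu^{*}$ a consequence of the norm bound for $R^{*}$ small, then combine the linear estimate \eqref{chap2-Estlineairemain} with Theorem \ref{chap2-estNLmain} to get the self-mapping and contraction properties of $\mathcal{F}$, and conclude by the Banach fixed point theorem. The only (harmless) differences are that you pin $\mu^{*}=2$ rather than leaving $\mu^{*}>1$ arbitrary, and you explicitly verify that $\mathcal{B}(R^{*},\mu^{*})$ is closed, a point the paper leaves implicit.
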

\begin{proof}
Let $R_{1}>0$ and $\mu^{*}>1$. In order to ensure that the map $\mathcal{F}$ is well defined from $\mathcal{B}(R^{*},\mu^{*})$ into itself (with $R^{*}$ to be defined) we control the estimate on $\norme{(1+\eta)^{-1}}_{\mathcal{C}([0,T]\times\Gamma_{s})}$ with the parameter $R_{1}$. Precisely, for all $(\ug,p,\eta)\in\mathcal{B}(R_{1},\mu^{*})$, the following estimate holds
\[\norme{\eta}_{\mathcal{C}([0,T]\times\Gamma_{s},\Gamma_{s})}\leq C_{\infty}R_{1},\]
with $C_{\infty}>0$ a positive constant. Then we choose $R_{2}<\frac{\mu^{*}-1}{C_{\infty}\mu^{*}}$ and for all $(\ug,p,\eta)\in\mathcal{B}(R_{2},\mu^{*})$ the following estimate holds
\[\norme{(1+\eta)^{-1}}_{\mathcal{C}([0,T]\times\Gamma_{s},\Gamma_{s})}\leq \frac{1}{1-C_{\infty}R_{2}}<\mu^{*}.\]
The linear estimate \ref{chap2-Estlineairemain} implies that, for all $(\ug,p,\eta)\in\mathcal{B}(R_{2},\mu^{*})$,
\begin{align*}
&\norme{\mathcal{F}(\ug,p,\eta)}_{\mathcal{X}}\leq C_{L}(\norme{\boldsymbol{\omega}_{1}}_{\mathcal{C}^{\theta}_{\sharp}([0,T];\BS{H}^{3/2}_{0}(\Gamma_{i}))\cap \mathcal{C}^{1+\theta}_{\sharp}([0,T];\BS{H}^{-1/2}(\Gamma_{i}))}+\norme{\omega_{2}}_{\mathcal{C}^{\theta}_{\sharp}([0,T];H^{1/2}(\Gamma_{o}))}\\
&+C(\mu^{*})Q(R_{2})\norme{(\ug,p,\eta)}_{\mathcal{X}}).
\end{align*}
We choose $0<R^{*}<R_{2}$ such that $C(\mu^{*})Q(R^{*})<\min(\frac{1}{2C_{L}},\frac{1}{2})$. Finally choose $(\boldsymbol{\omega}_{1},\omega_{2})$ such that
\[\norme{\boldsymbol{\omega}_{1}}_{\mathcal{C}^{\theta}_{\sharp}([0,T];\BS{H}^{3/2}_{0}(\Gamma_{i}))\cap \mathcal{C}^{1+\theta}_{\sharp}([0,T];\BS{H}^{-1/2}(\Gamma_{i}))}+\norme{\omega_{2}}_{\mathcal{C}^{\theta}_{\sharp}([0,T];H^{1/2}(\Gamma_{o}))}\leq \frac{R^{*}}{2C_{L}}.\]
At this point we proved that $\mathcal{F}$ is well defined from $\mathcal{B}(R^{*},\mu^{*})$ into itself. Moreover, using \eqref{chap2-lipstchiz-main}, the Lipschitz estimate
\[\begin{aligned}\norme{\mathcal{F}(\ug_{1},p_{1},\eta_{1})-\mathcal{F}(\ug_{2},p_{2},\eta_{2})}_{\mathcal{X}}&{}\leq C_{L}C(\mu^{*})Q(R^{*})\norme{(\ug_{1},p_{1},\eta_{1})-(\ug_{2},p_{2},\eta_{2})}_{\mathcal{X}}\\
&{}\leq \frac{1}{2}\norme{(\ug_{1},p_{1},\eta_{1})-(\ug_{2},p_{2},\eta_{2})}_{\mathcal{X}}.\end{aligned} \]
for all $(\ug_{i},p_{i},\eta_ {i})\in\mathcal{B}(R^{*},\mu)$ ($i=1,2$) shows that $\mathcal{F}$ is a contraction from $\mathcal{B}(R^{*},\mu^{*})$ into itself. The Banach fixed point theorem then ensures the existence of a solution to (\ref{chap2-mainexistence}).
\end{proof}
\begin{remark}
Notice that all the previous work can be done similarly with data 
\[(\boldsymbol{\omega}_{1},\omega_{2})\in\left(L^{2}(0,T;\BS{H}^{3/2}_{0}(\Gamma_{i}))\cap H^{1}_{\sharp}(0,T;\BS{H}^{-1/2}(\Gamma_{i}))\right)\times L^{2}(0,T;H^{1/2}(\Gamma_{o}).\]
Indeed, using Theorem \ref{chap2-theorem-L2}, the existence of a solution for the linear system is similar and the nonlinear estimates are provided in \cite[Section 4.1]{2017arXiv170706382C}. We obtained a solution
\[(\ug,p,\eta)\in \BS{H}^{2,1}_{\sharp}(Q_{T})\times L^{2}(0,T;H^{1}(\Omega))\times H^{4,2}_{\sharp}(\Sigma^{s}_{T}).\]
This proof of existence also applies to other boundary conditions. For instance, as soon as the Stokes problem admits a solution in $\BS{H}^{2}(\Omega)$ (e.g. for pressure boundary conditions on the inflow and the outflow, Dirichlet boundary condition, periodic boundary conditions...) the results are valid.
\end{remark}
\appendix
\section{Appendix: Abstract results on periodic evolution equations}\label{chap2-abstract.results}
Let $H$ be a Hilbert space (with norm $\norme{\cdot}$) and $A$ be the infinitesimal generator of an analytic semigroup $S(t)$ on $H$ with domain $\mathcal{D}(A)$. In this section we are interested in the existence of a $T$-periodic solution to the following abstract evolution equation
\begin{equation}\label{chap2-Absmain}
y'(t)=Ay(t)+f(t)\,\text{, for $t\in\mathbb{R}$},
\end{equation}
where $f:\mathbb{R}\rightarrow H$ is a $T$-periodic source term with a regularity to be specified. A $T$-periodic function $y$ is solution to \eqref{chap2-Absmain} if and only if its restriction to $[0,T]$ is solution to
\begin{equation}\label{chap2-Absmain1}
\begin{cases}
\begin{aligned}
&y'(t)=Ay(t)+f(t)\,\text{, for all }t\in[0,T],\\
&y(0)=y(T).
\end{aligned}
\end{cases}
\end{equation}
In this section, two frameworks are considered to study \eqref{chap2-Absmain1}. The Hilbert case, when $f\in L^{2}(0,T;H)$, and the continuous case when $f\in \mathcal{C}([0,T];H)$ (or $f$ is H\"older continuous). The Hilbert case provides powerful tools to study \eqref{chap2-Absmain1} through the existence of isomorphism theorems \cite[Theorem 3.1, part II, section 1.3]{MR2273323}. This framework is used to prove the existence of a unique solution to \eqref{chap2-Absmain1} under additional hypothesis on the operator $A$. The previous strategy is developed in Section \ref{chap1-hilbert.case}. When $f$ is continuous or H\"older continuous, we use the continuous theory for evolution equations to improve the regularity of this solution. In both case we are interested in the existence of strict solutions. For $y^{0}\in H$ and $f\in L^{2}(0,T;H)$ consider the evolution equation
\begin{equation}\label{chap2-def-sol-evol}
\begin{cases}
\begin{aligned}
&y'(t)=Ay(t)+f(t)\,\text{, for all }t\in[0,T],\\
&y(0)=y^{0}.
\end{aligned}
\end{cases}
\end{equation}
\begin{definition}\label{chap2-defi.sol}$\,$

\textit{(i)} $y$ is a strict solution of \eqref{chap2-def-sol-evol} in $L^{2}(0,T;H)$ if $y$ belongs to $L^{2}(0,T;\mathcal{D}(A))\cap H^{1}(0,T;H)$, $y'(t)=Ay(t)+f(t)\,\text{for a.e. }t\in[0,T]$, and $y(0)=y^{0}$.

\textit{(ii)} $y$ is a strict solution of \eqref{chap2-def-sol-evol} in $\mathcal{C}([0,T];H)$ if $y$ belongs to $\mathcal{C}([0,T];\mathcal{D}(A))\cap \mathcal{C}^{1}([0,T];H)$, $y'(t)=Ay(t)+f(t)\,\text{ for all }t\in[0,T]$, and $y(0)=y^{0}$.

\textit{(iii)} $y$ is a classical solution of \eqref{chap2-def-sol-evol} in $\mathcal{C}([0,T];H)$ if $y$ belongs to $\mathcal{C}((0,T];\mathcal{D}(A))\cap \mathcal{C}^{1}((0,T];H)\cap \mathcal{C}([0,T];H)$, $y'(t)=Ay(t)+f(t)\,\text{ for all }t\in[0,T]$, and $y(0)=y^{0}$.

\textit{(iv)} The function 
\[y(t)=S(t)y^{0}+\int_{0}^{t}S(t-s)f(s)ds,\]
is called the mild solution of problem \eqref{chap2-def-sol-evol} if $y$ belongs to $\mathcal{C}([0,T];H)$.
\end{definition}
In what follows we assume that the pair $(A,T)$ satisfies the assumption:
\begin{equation}\label{chap2-assumptionAT}
\begin{aligned}
&\textit{The resolvent of }A\textit{ is compact, }0\not\in \sigma_{p}(A)\textit{ and }T\in \mathbb{R}^{+}\setminus \{\frac{2k\pi}{b_{j}}\mid k\in\mathbb{Z},\,0\leq j\leq N_{A}\}\\
&\textit{where }\{ib_{j}\}_{0\leq j\leq N_{A}}\textit{ denote the non zero eigenvalues of }A\textit{ on the imaginary axis }i\mathbb{R}\\
&\textit{with }N_{A}\in\mathbb{N}\textit{ and }b_{j}\in\mathbb{R}^{*}\textit{ with }0\leq j\leq N_{A}.
\end{aligned}
\end{equation}
Remark that the assumptions $A$ generates an analytic semigroup and has a compact resolvent directly imply that $N_{A}$ is a finite number.
\subsection{Hilbert case}\label{chap1-hilbert.case}
In this section we obtain a simple criteria to ensure that the problem \eqref{chap2-Absmain1} admits a unique strict solution in $L^{2}(0,T;H)$.
\begin{lemma}\label{chap2-lemmaAn1}The evolution equation (\ref{chap2-Absmain1}) admits a strict solution in $L^{2}(0,T;H)$ if and only if the equation
\begin{equation}\label{chap2-equation-cini}
(I-S(T))z=\int_{0}^{T}S(T-s)f(s)ds.
\end{equation}
admits at least one solution $z\in [\mathcal{D}(A),H]_{1/2}$.
\end{lemma}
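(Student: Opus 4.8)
The plan is to establish the equivalence by converting the periodic boundary condition into a constraint on the initial datum via the Duhamel (variation of constants) formula, and then to check that the two regularity requirements—being a strict $L^2$-solution on the one hand, and having initial datum in the interpolation space $[\mathcal{D}(A),H]_{1/2}$ on the other—match up. The key analytic input is the standard maximal regularity result for analytic semigroups: for $f\in L^2(0,T;H)$ and $y^0\in H$, the Cauchy problem \eqref{chap2-def-sol-evol} admits a strict solution in $L^2(0,T;H)$ (i.e. $y\in L^2(0,T;\mathcal{D}(A))\cap H^1(0,T;H)$) if and only if $y^0\in[\mathcal{D}(A),H]_{1/2}$; moreover in that case $y$ is given by the mild formula and $y\in\mathcal{C}([0,T];[\mathcal{D}(A),H]_{1/2})$, so in particular $y(T)\in[\mathcal{D}(A),H]_{1/2}$. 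I would cite \cite[Theorem 3.1, part II, section 1.3]{MR2273323} for this, exactly as flagged in the paragraph preceding the statement.

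First I would prove the forward implication. Suppose $y$ is a strict solution of \eqref{chap2-Absmain1} in $L^2(0,T;H)$. Then $y$ solves \eqref{chap2-def-sol-evol} with $y^0:=y(0)$, so by the maximal regularity theorem $y^0\in[\mathcal{D}(A),H]_{1/2}$ and $y$ coincides with its mild representation $y(t)=S(t)y^0+\int_0^t S(t-s)f(s)\,ds$. Evaluating at $t=T$ and using the periodicity constraint $y(T)=y(0)=y^0$ gives
\[
y^0 = S(T)y^0 + \int_0^T S(T-s)f(s)\,ds,
\]
i.e. $z=y^0$ solves \eqref{chap2-equation-cini} and lies in $[\mathcal{D}(A),H]_{1/2}$.

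For the converse, suppose $z\in[\mathcal{D}(A),H]_{1/2}$ solves \eqref{chap2-equation-cini}. Let $y$ be the strict solution of \eqref{chap2-def-sol-evol} with initial datum $y^0:=z$, which exists by the maximal regularity theorem precisely because $z\in[\mathcal{D}(A),H]_{1/2}$; it is given by the mild formula. Evaluating at $T$ and invoking \eqref{chap2-equation-cini} yields $y(T)=S(T)z+\int_0^T S(T-s)f(s)\,ds = z = y(0)$, so $y$ satisfies the periodic condition and is therefore a strict solution of \eqref{chap2-Absmain1} in $L^2(0,T;H)$. This closes the equivalence.

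The only genuinely delicate point—and the step I would be most careful about—is the correct identification of the trace space: one must verify that the natural trace space of $L^2(0,T;\mathcal{D}(A))\cap H^1(0,T;H)$ at a time endpoint is exactly $[\mathcal{D}(A),H]_{1/2}$ (this is the classical Lions–Magenes trace theorem for anisotropic spaces, and it is the reason the $1/2$ appears), and that the continuity $y\in\mathcal{C}([0,T];[\mathcal{D}(A),H]_{1/2})$ holds so that $y(T)$ makes sense in that space. Everything else is a direct bookkeeping of the Duhamel formula together with the periodicity condition; no new estimates are needed beyond the cited isomorphism theorem.
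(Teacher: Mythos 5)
Your proof is correct and follows essentially the same route as the paper: the forward implication uses the Duhamel representation of the strict solution together with the trace embedding $L^{2}(0,T;\mathcal{D}(A))\cap H^{1}(0,T;H)\subset\mathcal{C}([0,T];[\mathcal{D}(A),H]_{1/2})$ to place $y(0)$ in the interpolation space, and the converse invokes the isomorphism theorem of \cite[Theorem 3.1, part II, section 1.3]{MR2273323} to solve the Cauchy problem from $z$ and then reads off periodicity from \eqref{chap2-equation-cini}. Your closing remark on the Lions--Magenes trace space is exactly the point the paper relies on, so nothing is missing.
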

\begin{proof}
Suppose that (\ref{chap2-Absmain1}) admits a strict solution $y\in L^{2}(0,T;\mathcal{D}(A))\cap H^{1}(0,T;H)$. We recall, see \cite{MR0350177}, that $L^{2}(0,T;\mathcal{D}(A))\cap H^{1}(0,T;H)\subset\mathcal{C}([0,T];[\mathcal{D}(A),H]_{1/2})$. As this solution coincides with the mild solution given by the Duhamel formula we have
\[y(0)=y(T)=S(T)y(0)+\int_{0}^{T}S(T-s)f(s)ds,\]
and thus $z=y(0)$ satisfies (\ref{chap2-equation-cini}). Reciprocally if $z\in [\mathcal{D}(A),H]_{1/2}$ satisfies the equation (\ref{chap2-equation-cini}) then consider the evolution equation
\begin{equation}\label{chap2-eqlemme1}
\begin{cases}
\begin{aligned}
&y'(t)=Ay(t)+f(t)\,\text{, for all }t\in[0,T],\\
&y(0)=z.
\end{aligned}
\end{cases}
\end{equation}
The isomorphism theorem \cite[Theorem 3.1, part II, section 1.3]{MR2273323} shows that (\ref{chap2-eqlemme1}) admits a unique solution $y\in L^{2}(0,T;\mathcal{D}(A))\cap H^{1}(0,T;H)$. Finally this solution satisfies (\ref{chap2-Absmain1}) by choice of $z$.
\end{proof}
\begin{lemma}\label{chap2-lemmaAn2}Suppose that the pair $(A,T)$ satisfies the assumption \eqref{chap2-assumptionAT}. Then the equation (\ref{chap2-equation-cini}) admits a unique solution $z\in [\mathcal{D}(A),H]_{1/2}$. Moreover the operator $P_{A}$ defined by
\[P_{A}f=(I-S(T))^{-1}\int_{0}^{T}S(T-s)f(s)ds,\]
is a bounded linear operator from $L^{2}(0,T;H)$ into $[\mathcal{D}(A),H]_{1/2}$.
\end{lemma}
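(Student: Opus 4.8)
The plan is to show that $I - S(T)$ is invertible on $H$ with the appropriate mapping properties, and then to establish the boundedness of $P_A$ into the interpolation space $[\mathcal{D}(A),H]_{1/2}$ using maximal regularity. First I would invoke the hypothesis \eqref{chap2-assumptionAT}: since $A$ generates an analytic semigroup with compact resolvent, the spectrum $\sigma(A)$ consists of isolated eigenvalues of finite multiplicity, only finitely many of which lie in any vertical strip; in particular only finitely many, namely $\{ib_j\}_{0\le j\le N_A}$ and possibly $0$, lie on $i\mathbb{R}$. By the spectral mapping theorem for analytic semigroups, $\sigma(S(T))\setminus\{0\} = e^{T\sigma(A)}$. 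Hence $1 \in \sigma(S(T))$ would force the existence of $\lambda \in \sigma(A)$ with $e^{T\lambda}=1$, i.e. $T\lambda \in 2i\pi\mathbb{Z}$, so $\lambda = 0$ (excluded since $0\notin\sigma_p(A)=\sigma(A)$) or $\lambda = ib_j$ with $Tb_j \in 2\pi\mathbb{Z}$ for some $j$ (excluded by the choice of $T$ in \eqref{chap2-assumptionAT}). Therefore $1 \in \rho(S(T))$, so $I-S(T)$ is boundedly invertible on $H$, which already shows that \eqref{chap2-equation-cini} has a unique solution $z\in H$, namely $z = P_A f$, and that $P_A$ is bounded from $L^2(0,T;H)$ into $H$ (the Duhamel integral $\int_0^T S(T-s)f(s)\,ds$ is a bounded map $L^2(0,T;H)\to H$ by analyticity and Cauchy--Schwarz).

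The remaining and more delicate point is the improved regularity $z \in [\mathcal{D}(A),H]_{1/2}$, together with continuity of $P_A$ into that space. The idea is to exploit maximal $L^2$-regularity. Given $f\in L^2(0,T;H)$, let $\tilde y$ be the unique strict solution in $L^2(0,T;\mathcal{D}(A))\cap H^1(0,T;H)$ of the initial value problem \eqref{chap2-def-sol-evol} with $y^0 = 0$; this exists by the isomorphism theorem \cite[Theorem 3.1, part II, section 1.3]{MR2273323} used in Lemma \ref{chap2-lemmaAn1}, and satisfies $\|\tilde y\|_{L^2(0,T;\mathcal{D}(A))\cap H^1(0,T;H)} \le C\|f\|_{L^2(0,T;H)}$. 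By the trace embedding $L^2(0,T;\mathcal{D}(A))\cap H^1(0,T;H) \hookrightarrow \mathcal{C}([0,T];[\mathcal{D}(A),H]_{1/2})$ recalled in the proof of Lemma \ref{chap2-lemmaAn1}, we get $\tilde y(T) \in [\mathcal{D}(A),H]_{1/2}$ with $\|\tilde y(T)\|_{[\mathcal{D}(A),H]_{1/2}} \le C\|f\|_{L^2(0,T;H)}$. Now Duhamel gives $\tilde y(T) = \int_0^T S(T-s)f(s)\,ds$, so
\[
P_A f = (I-S(T))^{-1}\tilde y(T).
\]
Thus it remains only to check that $(I-S(T))^{-1}$ maps $[\mathcal{D}(A),H]_{1/2}$ boundedly into itself. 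This holds because $S(T)$ commutes with $A$ (hence preserves $\mathcal{D}(A)$ and, by interpolation, the space $[\mathcal{D}(A),H]_{1/2}$), and $(I-S(T))^{-1}$ is a rational function of $S(T)$, or more directly: $S(T)$ is bounded on $[\mathcal{D}(A),H]_{1/2}$, and its inverse on this space is obtained by restricting the bounded inverse on $H$, since $(I-S(T))$ maps $[\mathcal{D}(A),H]_{1/2}$ boundedly into itself and is injective there. Combining the three bounds yields $\|P_A f\|_{[\mathcal{D}(A),H]_{1/2}} \le C\|f\|_{L^2(0,T;H)}$.

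I expect the main obstacle to be the careful justification of the spectral mapping step — specifically, that $1\in\rho(S(T))$ follows from \eqref{chap2-assumptionAT} — since the spectral mapping theorem $\sigma(S(t))\setminus\{0\} = e^{t\sigma(A)}$ requires either eventual norm-continuity (automatic for analytic semigroups) or a direct argument via the compactness of the resolvent of $A$; once that is in hand, the rest is a routine combination of maximal regularity and the trace embedding already cited in Lemma \ref{chap2-lemmaAn1}. A minor secondary point to handle cleanly is the invariance of $[\mathcal{D}(A),H]_{1/2}$ under $(I-S(T))^{-1}$, which follows from the commutation of the semigroup with its generator and standard interpolation of bounded operators.
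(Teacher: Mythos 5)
Your overall route coincides with the paper's: prove $1\in\rho(S(T))$ by a spectral mapping argument, identify the Duhamel integral with the terminal value of the zero-initial-data solution and use the trace embedding $L^{2}(0,T;\mathcal{D}(A))\cap H^{1}(0,T;H)\hookrightarrow \mathcal{C}([0,T];[\mathcal{D}(A),H]_{1/2})$ to place it in $[\mathcal{D}(A),H]_{1/2}$ with the right bound, and finally transport the inverse $(I-S(T))^{-1}$ to the interpolation space. Your spectral step is a legitimate variant: you invoke the spectral mapping theorem for (immediately norm-continuous) analytic semigroups, whereas the paper deduces compactness of $S(t)$ from the compactness of the resolvent and uses the point-spectrum mapping theorem; both yield $1\in\rho(S(T))$ under \eqref{chap2-assumptionAT}.

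However, the last step contains a genuine gap. You assert that $(I-S(T))^{-1}$ maps $[\mathcal{D}(A),H]_{1/2}$ into itself ``since $(I-S(T))$ maps $[\mathcal{D}(A),H]_{1/2}$ boundedly into itself and is injective there''. Boundedness plus injectivity of $I-S(T)$ on the interpolation space does not give surjectivity onto that space, so it does not follow that the $H$-inverse restricts to a map of $[\mathcal{D}(A),H]_{1/2}$ into itself; an injective bounded operator on a Banach space can fail to be onto. (The alternative remark that $(I-S(T))^{-1}$ is ``a rational function of $S(T)$'' is not correct either.) The missing ingredient --- and the one the paper uses --- is the smoothing property of the analytic semigroup: for $w\in[\mathcal{D}(A),H]_{1/2}$ and $z=(I-S(T))^{-1}w\in H$, write $z=w+S(T)z$ and note that $S(T)z\in\mathcal{D}(A^{n})$ for every $n$, so $z$ has exactly the regularity of $w$ and in particular lies in $[\mathcal{D}(A),H]_{1/2}$. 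This gives bijectivity of $I-S(T)$ on $[\mathcal{D}(A),H]_{1/2}$; combined with your (correct) observation that $I-S(T)$ is bounded on $\mathcal{D}(A)$ (via $\norme{AS(T)u}\leq (C/T)\norme{u}$) and hence, by interpolation, on $[\mathcal{D}(A),H]_{1/2}$, the bounded inverse theorem then yields the boundedness of $(I-S(T))^{-1}$ on that space, and your chain of estimates closes.
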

\begin{proof}
Let $v$ be the function defined by
\[v(t)=\int_{0}^{t}S(t-s)f(s)ds,\]
and remark that $v(t)\in [\mathcal{D}(A),H]_{1/2}$ for all $t\in [0,T]$. The analyticity of the semigroup $S(t)$ implies that $S(T)z\in \mathcal{D}(A^{n})$ for all $n\geq 0$ and $z\in H$. Hence a solution $z$ to (\ref{chap2-equation-cini}) has the same regularity to $v(T)$ i.e. is in $[\mathcal{D}(A),H]_{1/2}$.

The assumption that $A$ has a compact resolvent implies (see \cite[Theorem 3.3]{MR0512912} and recall that $S(t)$ is analytic and thus differentiable, which implies the continuity for the uniform operator topology for $t>0$) that $S(t)$ is a compact semigroup. Hence $\sigma(S(T))=\sigma_{p}(S(T))$ and the spectral mapping theorem $e^{T\sigma_{p}(A)}= \sigma_{p}(S(T))$, coupled with the assumption \eqref{chap2-assumptionAT}, shows that $1\in \rho(S(T))$. Thus $(I-S(T))z=w$ for $w\in [\mathcal{D}(A),H]_{1/2}\subset H$ can be rewritten $z=(I-S(T))^{-1}w\in H$ and this $z$ belongs to $[\mathcal{D}(A),H]_{1/2}$. We have proved that the operator $(I-S(T))$ is a bijection from $[\mathcal{D}(A),H]_{1/2}$ into itself. By definition $S(T)\in \mathcal{L}(H)$. Moreover, using the graph norm on $\mathcal{D}(A)$ and a classical estimate for analytic semigroups, we have for all $u\in\mathcal{D}(A)$
\[\norme{S(T)u}_{\mathcal{D}(A)}=\norme{S(T)u}_{H}+\norme{AS(T)u}_{H}\leq \norme{S(T)}_{\mathcal{L}(H)}\norme{u}_{H}+\frac{C}{T}\norme{u}_{H}\leq C\norme{u}_{\mathcal{D}(A)}.\]
Hence $(I-S(T))\in\mathcal{L}(\mathcal{D}(A))$ and by interpolation $(I-S(T))\in\mathcal{L}([\mathcal{D}(A),H]_{1/2})$. Finally the bounded inverse theorem implies that $(I-S(T))^{-1}$ is a bounded linear operator on $[\mathcal{D}(A),H]_{1/2}$. From the continuous embedding $L^{2}(0,T;\mathcal{D}(A))\cap H^{1}(0,T;H)\subset \mathcal{C}^{0}([0,T];[\mathcal{D}(A),H]_{1/2})$ we obtain that
\[\norme{v(T)}_{[\mathcal{D}(A),H]_{1/2}}\leq C(\norme{v}_{L^{2}(0,T;\mathcal{D}(A))}+\norme{v}_{H^{1}(0,T;H)})\leq C\norme{f}_{L^{2}(0,T;H)},\]
and
\[\norme{P_{A}f}_{[\mathcal{D}(A),H]_{1/2}}\leq C\norme{(I-S(T))^{-1}}_{\mathcal{L}([\mathcal{D}(A),H]_{1/2})}\norme{f}_{L^{2}(0,T;H)}.\]
\end{proof}
Hence we have proved the following theorem.
\begin{theorem}\label{chap2-theorem-L2}Suppose that the pair $(A,T)$ satisfies the assumption \eqref{chap2-assumptionAT}. Then the periodic evolution equation \eqref{chap2-Absmain1} admits a unique strict solution $y\in L^{2}(0,T;\mathcal{D}(A))\cap H^{1}_{\sharp}(0,T;H)$ in $L^{2}(0,T;H)$. The following estimate holds
\[\norme{y}_{L^{2}(0,T;\mathcal{D}(A))\cap H^{1}_{\sharp}(0,T;H)}\leq C\norme{f}_{L^{2}(0,T;H)}.\]
\end{theorem}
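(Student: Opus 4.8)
The plan is to assemble the two preceding lemmas, since the statement is exactly their conjunction. First I would invoke Lemma \ref{chap2-lemmaAn2}: because the pair $(A,T)$ satisfies assumption \eqref{chap2-assumptionAT}, the fixed-point equation \eqref{chap2-equation-cini} has a unique solution $z\in[\mathcal{D}(A),H]_{1/2}$, namely $z=P_{A}f$, and the operator $P_{A}$ is bounded from $L^{2}(0,T;H)$ into $[\mathcal{D}(A),H]_{1/2}$, so that $\norme{z}_{[\mathcal{D}(A),H]_{1/2}}\leq C\norme{f}_{L^{2}(0,T;H)}$.

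Next I would apply Lemma \ref{chap2-lemmaAn1}, which says that the existence of such a $z$ is equivalent to the existence of a strict solution in $L^{2}(0,T;H)$ of the periodic problem \eqref{chap2-Absmain1}: concretely, solving the Cauchy problem \eqref{chap2-eqlemme1} with initial datum $z$ through the isomorphism theorem \cite[Theorem 3.1, part II, section 1.3]{MR2273323} yields $y\in L^{2}(0,T;\mathcal{D}(A))\cap H^{1}(0,T;H)$, and the choice of $z$ satisfying \eqref{chap2-equation-cini} forces $y(0)=y(T)$. For uniqueness I would argue that any strict periodic solution $\tilde y$ has $\tilde y(0)=\tilde y(T)$ equal to a solution of \eqref{chap2-equation-cini} (using that a strict solution coincides with the Duhamel/mild solution), hence $\tilde y(0)=z$ by the uniqueness part of Lemma \ref{chap2-lemmaAn2}, and then $\tilde y=y$ by the uniqueness in the isomorphism theorem for the Cauchy problem with fixed initial datum. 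The $T$-periodic extension of $y$ to $\mathbb{R}$ then lies in $H^{1}_{\sharp}(0,T;H)$. For the a priori estimate, the isomorphism theorem provides $\norme{y}_{L^{2}(0,T;\mathcal{D}(A))\cap H^{1}(0,T;H)}\leq C\big(\norme{z}_{[\mathcal{D}(A),H]_{1/2}}+\norme{f}_{L^{2}(0,T;H)}\big)$, and substituting the bound on $\norme{z}_{[\mathcal{D}(A),H]_{1/2}}$ from the first step closes the argument.

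There is no real obstacle remaining at the level of this theorem, since all the work has been front-loaded into Lemma \ref{chap2-lemmaAn2}; the delicate point there is the spectral argument establishing $1\in\rho(S(T))$: one uses the compactness of the resolvent of $A$ to deduce that $S(t)$ is a compact semigroup for $t>0$ (so $\sigma(S(T))=\sigma_{p}(S(T))$), the spectral mapping theorem $e^{T\sigma_{p}(A)}=\sigma_{p}(S(T))\setminus\{0\}$, and assumption \eqref{chap2-assumptionAT} to exclude $1$ from $\sigma_{p}(S(T))$, together with the verification that $I-S(T)$ preserves the interpolation space $[\mathcal{D}(A),H]_{1/2}$ so that its inverse is bounded there. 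Granting Lemmas \ref{chap2-lemmaAn1} and \ref{chap2-lemmaAn2}, the present proof is just their combination.
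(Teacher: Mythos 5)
Your proposal is correct and follows essentially the same route as the paper: Theorem \ref{chap2-theorem-L2} is stated there as an immediate consequence of Lemmas \ref{chap2-lemmaAn1} and \ref{chap2-lemmaAn2}, with the written proof devoted only to the estimate, which you obtain in exactly the same way (isomorphism theorem plus the bound on $P_{A}f$ in $[\mathcal{D}(A),H]_{1/2}$). Your added remarks on uniqueness and on the spectral argument behind $1\in\rho(S(T))$ match the paper's implicit reasoning and the proof of Lemma \ref{chap2-lemmaAn2}.
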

\begin{proof}
It remains to prove the estimate. Using \cite[Theorem 3.1, part II, section 1.3]{MR2273323} and Lemma \ref{chap2-lemmaAn2} we obtain
\begin{align*}
\norme{y}_{L^{2}(0,T;\mathcal{D}(A))\cap H^{1}_{\sharp}(0,T;H)}&\leq C(\norme{y^{0}}_{[\mathcal{D}(A),H]_{1/2}}+\norme{f}_{L^{2}(0,T;H)})\\
&\leq C(\norme{P_{A}f}_{[\mathcal{D}(A),H]_{1/2}}+\norme{f}_{L^{2}(0,T;H)})\\
&\leq C\norme{f}_{L^{2}(0,T;H)}.
\end{align*}
\end{proof}
Using the regularization properties of analytic semigroup for $t>0$, that is $S(t)z\in\mathcal{D}(A^{n})$ for all $n\geq 1$ and $z\in H$, we can prove that the regularity of the solution solely depends on the source term $f$. Hence the previous result can be improved when $f$ is more regular. We introduce the space $\mathcal{H}^{r}=[\mathcal{D}(A^{n+1}),\mathcal{D}(A^{n})]_{1-\alpha}$ with $r=n+\alpha$, $n\geq 0$ an integer and $0\leq \alpha \leq 1$ a real number.
\begin{lemma}\label{chap2-lemmaAn3}Let $f$ be in $L^{2}(0,T;\mathcal{H}^{r-1})$ with $r>1$ and suppose that the pair $(A,T)$ satisfies the assumption \eqref{chap2-assumptionAT}. Then the unique strict solution $y$ in $L^{2}(0,T;H)$ belongs to $y\in L^{2}(0,T;\mathcal{H}^{r})\cap H^{1}_{\sharp}(0,T;\mathcal{H}^{r-1})$ and $y(0)\in [\mathcal{H}^{r},\mathcal{H}^{r-1}]_{1/2}$.
\end{lemma}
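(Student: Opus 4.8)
The plan is to run the $L^{2}$ periodic theory of Theorem~\ref{chap2-theorem-L2} one step higher on the scale of spaces, namely on $\mathcal{H}^{r-1}$, and then to identify the periodic solution so produced with $y$ through the uniqueness statement in $L^{2}(0,T;H)$. Write $r=n+\alpha$ with $n$ an integer and $0\leq\alpha\leq 1$; since $r>1$ one has $n\geq 1$, so $\mathcal{H}^{r-1}=[\mathcal{D}(A^{n}),\mathcal{D}(A^{n-1})]_{1-\alpha}$, $\mathcal{H}^{r}=[\mathcal{D}(A^{n+1}),\mathcal{D}(A^{n})]_{1-\alpha}$, and there are continuous embeddings $\mathcal{H}^{r}\hookrightarrow\mathcal{D}(A)\hookrightarrow\mathcal{H}^{r-1}\hookrightarrow H$; moreover $\mathcal{H}^{r-1}$ is a Hilbert space. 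Since the resolvent of $A$ is compact and $0\notin\sigma_{p}(A)=\sigma(A)$, we have $0\in\rho(A)$, so $A$ is boundedly invertible and the interpolation scale is clean. I would then let $A_{r-1}$ be the part of $A$ in $\mathcal{H}^{r-1}$, i.e. $\mathcal{D}(A_{r-1})=\{x\in\mathcal{D}(A)\cap\mathcal{H}^{r-1}\mid Ax\in\mathcal{H}^{r-1}\}$ and $A_{r-1}x=Ax$, and invoke the classical theory of analytic semigroups on the associated scale of spaces (see \cite{MR2273323}): $A_{r-1}$ generates an analytic semigroup on $\mathcal{H}^{r-1}$, its domain is exactly $\mathcal{D}(A_{r-1})=\mathcal{H}^{r}$, and the semigroup it generates is the restriction $S(t)_{\vert\mathcal{H}^{r-1}}$; in particular $S(T)$ leaves $\mathcal{H}^{r-1}$ invariant and Duhamel's formula in $\mathcal{H}^{r-1}$ is the restriction of the one in $H$.

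Next I would check that the pair $(A_{r-1},T)$ again satisfies assumption \eqref{chap2-assumptionAT}. For the resolvent: since $\mathcal{D}(A)\hookrightarrow H$ is compact, each embedding $\mathcal{D}(A^{k+1})\hookrightarrow\mathcal{D}(A^{k})$ is compact (factor it through the compact embedding via the isomorphism $A^{k}$), hence by interpolation $\mathcal{H}^{r}\hookrightarrow\mathcal{H}^{r-1}$ is compact; and $(\lambda I-A_{r-1})^{-1}$, being the restriction of $(\lambda I-A)^{-1}$, maps $\mathcal{H}^{r-1}$ boundedly into $\mathcal{H}^{r}$, so it is compact on $\mathcal{H}^{r-1}$. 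For the point spectrum: if $\mu\in\sigma_{p}(A)$ with eigenvector $x$, then $S(t)x=\e^{\mu t}x$ and the analyticity of $S$ gives $S(t)x\in\bigcap_{k\geq 1}\mathcal{D}(A^{k})\subset\mathcal{H}^{s}$ for every $s$ and every $t>0$; hence $x\in\mathcal{H}^{r-1}$, while conversely every eigenvalue of $A_{r-1}$ is one of $A$. Therefore $\sigma_{p}(A_{r-1})=\sigma_{p}(A)$, so $0\notin\sigma_{p}(A_{r-1})$ and the non-zero eigenvalues of $A_{r-1}$ on the imaginary axis are again $\{ib_{j}\}_{0\leq j\leq N_{A}}$, so the fixed $T$ still meets the last requirement of \eqref{chap2-assumptionAT}.

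Finally I would apply Theorem~\ref{chap2-theorem-L2} with $(H,A)$ replaced by $(\mathcal{H}^{r-1},A_{r-1})$ and source term $f\in L^{2}(0,T;\mathcal{H}^{r-1})$, obtaining a unique $T$-periodic strict solution $\tilde y\in L^{2}(0,T;\mathcal{D}(A_{r-1}))\cap H^{1}_{\sharp}(0,T;\mathcal{H}^{r-1})=L^{2}(0,T;\mathcal{H}^{r})\cap H^{1}_{\sharp}(0,T;\mathcal{H}^{r-1})$, with $\tilde y(0)=P_{A_{r-1}}f\in[\mathcal{D}(A_{r-1}),\mathcal{H}^{r-1}]_{1/2}=[\mathcal{H}^{r},\mathcal{H}^{r-1}]_{1/2}$. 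Since $\mathcal{H}^{r}\hookrightarrow\mathcal{D}(A)$, $\mathcal{H}^{r-1}\hookrightarrow H$, and both $S(t)$ and the Duhamel integral in $\mathcal{H}^{r-1}$ are restrictions of those in $H$, the function $\tilde y$ is also a $T$-periodic strict solution of \eqref{chap2-Absmain1} in $L^{2}(0,T;H)$ in the sense of Definition~\ref{chap2-defi.sol}(i); the uniqueness part of Theorem~\ref{chap2-theorem-L2} applied in $H$ then forces $\tilde y=y$, which gives all the asserted regularity. The only step that is not a routine transcription of the $L^{2}$ periodic theory is the first one, namely establishing that the part of $A$ in $\mathcal{H}^{r-1}$ inherits analyticity, compactness of the resolvent, and the exact location of the imaginary eigenvalues; once that structural point is in hand, everything reduces to the already-established results together with the uniqueness statement.
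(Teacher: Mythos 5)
Your proof is correct, but it takes a genuinely different route from the paper's. The paper does not re-run the periodic existence theory on the higher rung of the scale: it takes the strict solution $y$ already produced by Theorem \ref{chap2-theorem-L2} in $H$, splits it as $y=y_{1}+y_{2}$, where $y_{1}$ solves the inhomogeneous problem with zero initial datum and $y_{2}=S(\cdot)z$ the homogeneous one, observes that $y_{2}(T)=S(T)z\in\mathcal{D}(A^{n})$ for all $n$ by analyticity while the maximal-regularity (isomorphism) theorem on the scale gives $y_{1}\in L^{2}(0,T;\mathcal{H}^{r})\cap H^{1}(0,T;\mathcal{H}^{r-1})\subset\mathcal{C}([0,T];[\mathcal{H}^{r},\mathcal{H}^{r-1}]_{1/2})$, concludes from periodicity that $y(0)=y(T)\in[\mathcal{H}^{r},\mathcal{H}^{r-1}]_{1/2}$, and then applies the isomorphism theorem once more. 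That bootstrap is short and never needs any spectral information about the part of $A$ in $\mathcal{H}^{r-1}$. Your version instead verifies that $(A_{r-1},T)$ again satisfies \eqref{chap2-assumptionAT} --- which is where the real work sits: the identification $\mathcal{D}(A_{r-1})=\mathcal{H}^{r}$ and the analyticity use $0\in\rho(A)$ (correctly deduced from compactness of the resolvent and $0\notin\sigma_{p}(A)$), the compactness of $\mathcal{H}^{r}\hookrightarrow\mathcal{H}^{r-1}$ follows from compact interpolation (unproblematic here since the couples are Hilbertian, so the complex method coincides with the real one and the Lions--Peetre/Cwikel result applies), and eigenvectors of $A$ lie in $\bigcap_{k}\mathcal{D}(A^{k})$, so the imaginary eigenvalues are unchanged. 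You then get the conclusion verbatim from Theorem \ref{chap2-theorem-L2} on $\mathcal{H}^{r-1}$ plus uniqueness in $H$. What your approach buys is a self-contained periodic theory on each rung of the scale, including boundedness of $P_{A_{r-1}}$ from $L^{2}(0,T;\mathcal{H}^{r-1})$ into $[\mathcal{H}^{r},\mathcal{H}^{r-1}]_{1/2}$; the price is the extra structural verification, and the compact-interpolation step is the one place that deserves an explicit justification (it can also be bypassed entirely by noting that $1\in\rho(S(T)_{\vert\mathcal{H}^{r-1}})$ already follows from $1\in\rho(S(T))$ in $H$ together with the smoothing identity $(I-S(T))^{-1}=I+S(T)(I-S(T))^{-1}$).
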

\begin{proof}We split (\ref{chap2-Absmain1}) in two parts
\[
\begin{cases}
\begin{aligned}
&y_{1}'(t)=Ay_{1}(t)+f(t)\,\text{, for all }t\in[0,T],\\
&y_{1}(0)=0,
\end{aligned}
\end{cases}
\]
and
\[
\begin{cases}
\begin{aligned}
&y_{2}'(t)=Ay_{2}(t)\,\text{, for all }t\in[0,T],\\
&y_{2}(0)=z.
\end{aligned}
\end{cases}
\]
Using the analyticity of $S$ we have $y_{2}(T)=S(T)z\in\mathcal{D}(A^{n})$ for all $n\geq 1$. On the other hand \cite[Theorem 2.2, part II, section 3.2.1]{MR2273323} (and the remark following the theorem on the extension of the isomorphism theorem) implies that $y_{1}\in L^{2}(0,T;H^{r})\cap H^{1}(0,T;H^{r-1})\subset \mathcal{C}^{0}([0,T];[H^{r},H^{r-1}]_{1/2})$. Hence $y(T)=y_{1}(T)+y_{2}(T)$ is in $[H^{r},H^{r-1}]_{1/2}$. Then we use the periodic condition $y(T)=y(0)$ and again \cite[Theorem 2.2, part II, section 3.2.1]{MR2273323} to obtain $y\in L^{2}(0,T;\mathcal{H}^{r})\cap H^{1}_{\sharp}(0,T;\mathcal{H}^{r-1})$.
\end{proof}
\subsection{Continuous case}
Let us recall the fundamental existence and regularity result (see \cite[Theorem 1.2.1, Section II]{MR1345385}):
\begin{theorem}\label{chap2-amann-theorem}Suppose that $f\in \mathcal{C}^{\rho}([0,T];H)$ with $\rho\in(0,1)$ and $y^{0}\in H$. Then the Cauchy problem \eqref{chap2-def-sol-evol} possesses a unique classical solution $y$ in $\mathcal{C}([0,T];H)$ and
\[y\in \mathcal{C}^{\rho}((0,T];\mathcal{D}(A))\cap \mathcal{C}^{\rho+1}((0,T];H),\]
with the estimate, for all $\varepsilon>0$,
\[\norme{y}_{\mathcal{C}^{\rho}([\varepsilon,T];\mathcal{D}(A))\cap \mathcal{C}^{\rho+1}([\varepsilon,T];H)}\leq C(\norme{y(\varepsilon)}_{\mathcal{D}(A)}+\norme{f}_{\mathcal{C}^{\rho}([0,T];H)}).\]
If $y^{0}\in \mathcal{D}(A)$ then the solution is strict.
\end{theorem}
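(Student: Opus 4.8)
The plan is to construct the solution by Duhamel's formula and then to read off its regularity from the classical smoothing estimates for analytic semigroups. I would set
\[
y(t):=S(t)y^{0}+v(t),\qquad v(t):=\int_{0}^{t}S(t-s)f(s)\,ds,
\]
and first note that any classical solution must coincide with this $y$: if $y$ solves \eqref{chap2-def-sol-evol} in the classical sense, then $s\mapsto S(t-s)y(s)$ is differentiable on $(0,t)$ with derivative $S(t-s)f(s)$, so integrating over $[0,t]$ recovers the formula; this already gives uniqueness. It then remains to show that the $y$ defined above has the announced regularity and genuinely satisfies the equation.

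The term $S(t)y^{0}$ is handled purely by analyticity: $S(t)y^{0}\in\mathcal{D}(A^{n})$ for every $t>0$ and $n\geq 1$, the map $t\mapsto S(t)y^{0}$ is $\mathcal{C}^{\infty}$ on $(0,T]$ with derivative $AS(t)y^{0}$, and one has the standard bounds $\norme{AS(t)}_{\mathcal{L}(H)}\leq C/t$ and $\norme{A^{2}S(t)}_{\mathcal{L}(H)}\leq C/t^{2}$. If $y^{0}\in\mathcal{D}(A)$ this term is moreover continuous up to $t=0$ with values in $\mathcal{D}(A)$, so it already contributes a strict solution of $y'=Ay$.

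For the convolution term the crucial point, and the part I expect to be the real work, is the H\"older-in-time estimate on $Av$. Using the decomposition
\[
Av(t)=\int_{0}^{t}AS(t-s)\bigl(f(s)-f(t)\bigr)\,ds+\bigl(S(t)-I\bigr)f(t),
\]
the bound $\norme{AS(\tau)}_{\mathcal{L}(H)}\leq C/\tau$ together with $\norme{f(s)-f(t)}\leq\norme{f}_{\mathcal{C}^{\rho}([0,T];H)}\,|t-s|^{\rho}$ makes the integrand $\leq C(t-s)^{\rho-1}$, which is integrable; hence $v(t)\in\mathcal{D}(A)$ for every $t\in[0,T]$. To upgrade this to $\rho$-H\"older continuity of $t\mapsto Av(t)$, and to $v\in\mathcal{C}^{1+\rho}$ with $v'=Av+f$, one compares $Av(t_{1})$ and $Av(t_{2})$ for $t_{2}<t_{1}$ by splitting the $s$-integral at $t_{2}$: on $(t_{2},t_{1})$ the same integrability bounds the difference by $C|t_{1}-t_{2}|^{\rho}$, while on $(0,t_{2})$ one writes $AS(t_{1}-s)-AS(t_{2}-s)=\int_{t_{2}-s}^{t_{1}-s}A^{2}S(\sigma)\,d\sigma$ and uses $\norme{A^{2}S(\sigma)}_{\mathcal{L}(H)}\leq C/\sigma^{2}$; a careful bookkeeping of the exponents again yields $C|t_{1}-t_{2}|^{\rho}$. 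This is the technical heart of the argument and the step most prone to error, because the constants degenerate as the left endpoint tends to $0^{+}$ (the term $AS(t)y^{0}$ is only $O(1/t)$ when $y^{0}\notin\mathcal{D}(A)$), which is exactly why the quantitative estimate is stated on $[\varepsilon,T]$ with $\norme{y(\varepsilon)}_{\mathcal{D}(A)}$ on the right-hand side.

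To close, once the regularization for $t>0$ is established one has $y(\varepsilon)\in\mathcal{D}(A)$, and applying the strict version of the above on $[\varepsilon,T]$ with initial datum $y(\varepsilon)$ gives control of $\norme{y}_{\mathcal{C}^{\rho}([\varepsilon,T];\mathcal{D}(A))\cap\mathcal{C}^{1+\rho}([\varepsilon,T];H)}$ in terms of $\norme{y(\varepsilon)}_{\mathcal{D}(A)}+\norme{f}_{\mathcal{C}^{\rho}([0,T];H)}$. When $y^{0}\in\mathcal{D}(A)$ the same estimates hold uniformly down to $t=0$, since the only term that degenerated is now bounded, giving a strict solution with $y\in\mathcal{C}([0,T];\mathcal{D}(A))\cap\mathcal{C}^{1}([0,T];H)$. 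For the complete details I would follow \cite[Theorem 1.2.1, Section II]{MR1345385}.
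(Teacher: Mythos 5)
Your proposal is correct and follows essentially the same route as the paper, whose proof of this theorem consists entirely of deferring to \cite[Theorem 1.2.1, Section II]{MR1345385}: the Duhamel representation, the smoothing bounds $\norme{AS(\tau)}_{\mathcal{L}(H)}\leq C/\tau$ and $\norme{A^{2}S(\tau)}_{\mathcal{L}(H)}\leq C/\tau^{2}$, and the decomposition $Av(t)=\int_{0}^{t}AS(t-s)(f(s)-f(t))\,ds+(S(t)-I)f(t)$ are exactly the steps of that classical argument, which you reconstruct accurately (note only that for $y^{0}\in\mathcal{D}(A)$ one gets strictness, i.e.\ continuity of $Ay$ and $y'$ up to $t=0$, not the H\"older seminorm up to $t=0$ --- but that is all the statement claims).
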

\begin{proof}
The estimate can be obtained following the steps of the proof in \cite[Theorem 1.2.1, Section II]{MR1345385}, see in particular \cite[Theorem 2.5.6, Section III]{MR1345385}.
\end{proof}
We are now able to prove the existence of a strict periodic solution in $\mathcal{C}([0,T];H)$. Moreover, the previous H\"older regularity result and the periodicity show that the periodic solution possesses H\"older regularity up to $t=0$.
\begin{theorem}\label{chap2-thm-C0}Let $f\in \mathcal{C}^{\rho}([0,T];H)$ with $\rho\in(0,1)$ and suppose that the pair $(A,T)$ satisfies the assumption \eqref{chap2-assumptionAT}. Then the periodic evolution equation \eqref{chap2-Absmain1} admits a unique strict solution $y$ in $\mathcal{C}([0,T];H)$. Moreover
\[y\in \mathcal{C}^{\rho}([0,T];\mathcal{D}(A))\cap \mathcal{C}^{\rho+1}([0,T];H),\]
and the following estimate holds
\begin{equation}\label{chap2-estimateyf}\norme{y}_{\mathcal{C}^{\rho}([0,T];\mathcal{D}(A))\cap \mathcal{C}^{\rho+1}([0,T];H)}\leq C\norme{f}_{\mathcal{C}^{\rho}([0,T];H)}.
\end{equation}
\end{theorem}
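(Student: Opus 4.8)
The plan is to combine the Hilbert-space existence result (Theorem~\ref{chap2-theorem-L2}) with the parabolic regularity result of Amann (Theorem~\ref{chap2-amann-theorem}) via the periodicity condition. First I would observe that since $f\in\mathcal{C}^{\rho}([0,T];H)\subset L^{2}(0,T;H)$ and the pair $(A,T)$ satisfies \eqref{chap2-assumptionAT}, Theorem~\ref{chap2-theorem-L2} already produces a unique strict solution $y$ in $L^{2}(0,T;H)$, with $y\in L^{2}(0,T;\mathcal{D}(A))\cap H^{1}_{\sharp}(0,T;H)\hookrightarrow\mathcal{C}([0,T];[\mathcal{D}(A),H]_{1/2})$; in particular $y^{0}:=y(0)\in[\mathcal{D}(A),H]_{1/2}$ is well defined, and $z=y^{0}$ is the unique solution of \eqref{chap2-equation-cini} furnished by Lemma~\ref{chap2-lemmaAn2}. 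Uniqueness of the $\mathcal{C}([0,T];H)$-strict solution follows at once: any such solution is in particular a strict $L^{2}$-solution, hence equals $y$.

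The core of the argument is a bootstrap in time using periodicity to upgrade $y$ from $\mathcal{C}([0,T];[\mathcal{D}(A),H]_{1/2})$-regularity to $\mathcal{D}(A)$-regularity at the \emph{initial} time. The key step: on the interval $[0,T]$, apply Theorem~\ref{chap2-amann-theorem} with initial datum $y^{0}$; this gives that $y$ (which coincides with the classical solution, by uniqueness of mild solutions) satisfies $y\in\mathcal{C}^{\rho}((0,T];\mathcal{D}(A))\cap\mathcal{C}^{\rho+1}((0,T];H)$, so in particular $y(T)\in\mathcal{D}(A)$. By periodicity $y(0)=y(T)\in\mathcal{D}(A)$. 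Now re-run Theorem~\ref{chap2-amann-theorem} on $[0,T]$ with the \emph{improved} initial datum $y^{0}=y(T)\in\mathcal{D}(A)$: the last sentence of that theorem then asserts the solution is \emph{strict}, i.e. $y\in\mathcal{C}([0,T];\mathcal{D}(A))\cap\mathcal{C}^{1}([0,T];H)$, and the H\"older-in-time estimate, now valid up to $t=0$ since $y(\varepsilon)\to y(0)$ in $\mathcal{D}(A)$ (or by translating the equation), gives $y\in\mathcal{C}^{\rho}([0,T];\mathcal{D}(A))\cap\mathcal{C}^{\rho+1}([0,T];H)$.

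For the estimate \eqref{chap2-estimateyf}, I would chain the available bounds: from Theorem~\ref{chap2-amann-theorem} on a subinterval $[\varepsilon,T]$ (say $\varepsilon=T/2$), $\norme{y}_{\mathcal{C}^{\rho}([T/2,T];\mathcal{D}(A))\cap\mathcal{C}^{\rho+1}([T/2,T];H)}\leq C(\norme{y(T/2)}_{\mathcal{D}(A)}+\norme{f}_{\mathcal{C}^{\rho}([0,T];H)})$; next control $\norme{y(T/2)}_{\mathcal{D}(A)}$, hence $\norme{y(T)}_{\mathcal{D}(A)}=\norme{y(0)}_{\mathcal{D}(A)}$, by the same Amann estimate on $[0,T/2]$ applied to the $L^{2}$-solution, using $\norme{y(0)}_{[\mathcal{D}(A),H]_{1/2}}\leq\norme{P_{A}f}_{[\mathcal{D}(A),H]_{1/2}}\leq C\norme{f}_{L^{2}(0,T;H)}\leq C\norme{f}_{\mathcal{C}^{\rho}([0,T];H)}$ from Lemma~\ref{chap2-lemmaAn2}; then re-apply Amann on all of $[0,T]$ with strict datum $y(0)\in\mathcal{D}(A)$ to get the full estimate on $[0,T]$ in terms of $\norme{y(0)}_{\mathcal{D}(A)}+\norme{f}_{\mathcal{C}^{\rho}([0,T];H)}$, and substitute. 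Each link only uses constants depending on $A$, $T$, $\rho$.

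The main obstacle I anticipate is making the regularity \emph{up to $t=0$} rigorous rather than merely on $(0,T]$: Amann's theorem as quoted only gives interior (in time) H\"older regularity for general $H$-data, so one genuinely needs the two-pass bootstrap — first pass to land $y(T)\in\mathcal{D}(A)$, periodicity to move it to $t=0$, second pass with $\mathcal{D}(A)$-datum to invoke the ``strict'' conclusion — and one must check that the estimate in Theorem~\ref{chap2-amann-theorem}, stated on $[\varepsilon,T]$, indeed extends to $[0,T]$ once $y^0\in\mathcal{D}(A)$ (this is the content of the parenthetical reference to \cite[Theorem~2.5.6, Section III]{MR1345385}). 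The rest is routine chaining of continuous-dependence estimates together with the $L^2$-to-$[\mathcal{D}(A),H]_{1/2}$ bound on the Poincar\'e-map fixed point $y(0)=P_A f$.
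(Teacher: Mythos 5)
Your overall strategy coincides with the paper's: a first pass of Theorem \ref{chap2-amann-theorem} with the $H$-valued datum $y(0)=P_{A}f$ to land $y(T)\in\mathcal{D}(A)$, transport to $t=0$ by periodicity, a second pass with the $\mathcal{D}(A)$-datum for strictness, and a translation/periodic-extension argument for the H\"older regularity up to $t=0$ --- the paper does exactly this via the $T$-periodic extension $\hat y$ on $[0,2T]$, using interior regularity near $t=T$. One caution: ``$y(\varepsilon)\to y(0)$ in $\mathcal{D}(A)$'' alone gives continuity of $Ay$ at $t=0$, not a uniform H\"older seminorm down to $t=0$ (for that one generally needs $Ay^{0}+f(0)$ in an interpolation space); your parenthetical ``by translating the equation'' is the argument that actually works and should be the primary one.

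The one step that does not follow from the tools you cite is the control of $\norme{y(T/2)}_{\mathcal{D}(A)}$. Theorem \ref{chap2-amann-theorem} bounds $\norme{y}_{\mathcal{C}^{\rho}([\varepsilon,T];\mathcal{D}(A))}$ by $\norme{y(\varepsilon)}_{\mathcal{D}(A)}+\norme{f}_{\mathcal{C}^{\rho}([0,T];H)}$, i.e.\ it requires a $\mathcal{D}(A)$-norm of the solution at an interior time on the right-hand side; invoking it ``on $[0,T/2]$'' starting only from $\norme{y(0)}_{[\mathcal{D}(A),H]_{1/2}}$ is circular, since that norm never enters the stated estimate. What is actually needed is the parabolic smoothing bound $\norme{y(T/2)}_{\mathcal{D}(A)}\leq C\bigl(\norme{y(0)}_{H}+\norme{f}_{\mathcal{C}^{\rho}([0,T];H)}\bigr)$, which the paper proves directly from the Duhamel formula: $\norme{AS(\varepsilon)y^{0}}_{H}\leq (C/\varepsilon)\norme{y^{0}}_{H}$ for the homogeneous part, and the splitting
\[
\int_{0}^{\varepsilon}AS(\varepsilon-s)f(s)\,ds=\int_{0}^{\varepsilon}AS(\varepsilon-s)\bigl(f(s)-f(\varepsilon)\bigr)\,ds+(S(\varepsilon)-I)f(\varepsilon),
\]
where the first integral converges because $\norme{AS(\varepsilon-s)(f(s)-f(\varepsilon))}_{H}\leq C\vert\varepsilon-s\vert^{\rho-1}\norme{f}_{\mathcal{C}^{\rho}([0,T];H)}$. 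Once you insert this computation, together with $\norme{y(0)}_{H}\leq C\norme{f}_{L^{2}(0,T;H)}$ from Lemma \ref{chap2-lemmaAn2}, your chain of estimates closes and the argument matches the paper's.
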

\begin{proof}
We already know that there exists a strict solution in $L^{2}(0,T;H)$. Keeping the notations used in Lemma \ref{chap2-lemmaAn3}, we split $y=y_{1}+y_{2}$. For $y_{2}$ we still have $y_{2}(T)\in\mathcal{D}(A)$. Theorem \ref{chap2-amann-theorem} implies that $y_{1}\in \mathcal{C}^{\rho}((0,T];\mathcal{D}(A))\cap \mathcal{C}^{\rho+1}((0,T];H)$, thus $y_{1}(T)\in \mathcal{D}(A)$. Then the periodic condition $y(0)=y(T)$ implies that $y(0)\in\mathcal{D}(A)$ and Theorem \ref{chap2-amann-theorem} ensures the existence of a strict solution in $\mathcal{C}([0,T];H)$. Finally, considering the $T$-periodic extension $\hat{y}$ of $y$ on $[0,2T]$ the H\"older regularity result implies that $\hat{y}\in \mathcal{C}^{\rho}((0,2T];\mathcal{D}(A))\cap \mathcal{C}^{\rho+1}((0,2T];H)$. Hence $\hat{y}$ is H\"older in a neighbourhood of $T$, which implies that $y\in\mathcal{C}^{\rho}([0,T];\mathcal{D}(A))\cap \mathcal{C}^{\rho+1}([0,T];H)$. It remains to estimate $y$ with respect to $f$. Let us fix $\varepsilon=\frac{T}{2}$. We have
\[y(\varepsilon)=S(\varepsilon)y^{0} + \int_{0}^{\varepsilon}S(\varepsilon-s)f(s)ds.\]
The homogeneous part was already estimated in Lemma \ref{chap2-lemmaAn2}
\[\norme{S(\varepsilon)y^{0}}_{\mathcal{D}(A)}\leq C\norme{y^{0}}_{H}.\]
The integral part in Duhamel can be estimated as follows
\[\int_{0}^{\varepsilon}AS(\varepsilon-s)f(s)ds=\int_{0}^{\varepsilon}AS(\varepsilon-s)(f(s)-f(\varepsilon))ds + \int_{0}^{\varepsilon}AS(\varepsilon-s)f(\varepsilon)ds,\]
and
\[\norme{\int_{0}^{\varepsilon}AS(\varepsilon-s)f(\varepsilon)ds}_{H}=\norme{(S(\varepsilon)-I)f(\varepsilon)}_{H}\leq C\norme{f}_{\mathcal{C}^{\rho}([0,T];H)},\]
where we have used $\displaystyle\frac{d}{dt}S(t)=AS(t)$.
Finally
\[\norme{\int_{0}^{\varepsilon}AS(\varepsilon-s)(f(s)-f(\varepsilon))ds}_{H}\leq \int_{0}^{\varepsilon}\frac{C}{\vert \varepsilon-s\vert}\vert\varepsilon-s\vert^{\rho}\norme{f}_{\mathcal{C}^{\rho}([0,T];H)}ds\leq C\norme{f}_{\mathcal{C}^{\rho}([0,T];H)},\]
and $\displaystyle\norme{y(\varepsilon)}_{\mathcal{D}(A)}\leq C\norme{f}_{\mathcal{C}^{\rho}([0,T];H)}$. The estimate in Theorem \ref{chap2-amann-theorem} implies that 
\[\norme{\hat{y}}_{\mathcal{C}^{\rho}([\varepsilon,2T];\mathcal{D}(A))\cap \mathcal{C}^{\rho+1}([\varepsilon,2T];H)}\leq C\norme{\hat{f}}_{\mathcal{C}^{\rho}([0,2T];H)},\]
where $\hat{f}$ is the $T$-periodic extension of $f$ to $[0,2T]$. Then, taking the restriction to a period $T$, we obtain the estimate \eqref{chap2-estimateyf}.
\end{proof}

\bibliographystyle{plain}
\bibliography{biblio_1}
\end{document}